\DeclareMathOperator{\Ad}{Ad}
\DeclareMathOperator{\ad}{ad}
\DeclareMathOperator{\Aut}{Aut}
\DeclareMathOperator{\cl}{cl}
\DeclareMathOperator{\diff}{d}
\DeclareMathOperator{\e}{e}
\DeclareMathOperator{\End}{End}
\DeclareMathOperator{\fppf}{fppf}
\DeclareMathOperator{\Frob}{Frob}
\DeclareMathOperator{\GL}{GL}
\DeclareMathOperator{\gl}{\mathfrak{gl}}
\DeclareMathOperator{\h}{h}
\DeclareMathOperator{\Hom}{Hom}
\DeclareMathOperator{\id}{id}
\DeclareMathOperator{\Ker}{Ker}
\DeclareMathOperator{\Lie}{Lie}
\DeclareMathOperator{\nil}{Nil}
\DeclareMathOperator{\N}{N}
\DeclareMathOperator{\PGL}{PGL}
\DeclareMathOperator{\pgl}{\mathfrak{pgl}}
\DeclareMathOperator{\rad}{rad}
\DeclareMathOperator{\radu}{\mathfrak{rad_u}}
\DeclareMathOperator{\Rad}{Rad}
\DeclareMathOperator{\reg}{reg}
\DeclareMathOperator{\SL}{SL}
\DeclareMathOperator{\ssl}{\mathfrak{sl}}
\DeclareMathOperator{\SO}{SO}
\DeclareMathOperator{\Spec}{Spec}
\DeclareMathOperator{\Ss}{ss}
\DeclareMathOperator{\Sc}{sc}
\DeclareMathOperator{\tr}{Tr}
\newcommand{\cat}[1]{\underline{\mathrm{#1}}}
\newcommand{\newcategory}[2]{
    \newcommand{#1}{\cat{#2}}
}
\newcategory{\Ab}        {Ab}
\newcategory{\Alg}        {Alg}
\newcategory{\Bij}        {Bij}
\newcategory{\Cat}        {Cat}
\newcategory{\Field}    {Field}
\newcategory{\Fin}        {Fin}
\newcategory{\Graph}    {Graph}
\newcategory{\Grp}        {Grp}
\newcategory{\Mod}        {Mod}
\newcategory{\Mon}        {Mon}
\newcategory{\Op}        {Op}
\newcategory{\Pos}        {Pos}
\newcategory{\Ring}        {Ring}
\newcategory{\Set}        {Set}
\newcategory{\Top}        {Top}
\newcommand{\red}{\mathrm{red}}
\newcolumntype{P}[1]{>{\centering\arraybackslash}p{#1}}
\DeclareMathAlphabet{\mathpzc}{OT1}{pzc}{m}{it}
\newtheorem{theorem}{Theorem}[section]
\newtheorem{proposition}[theorem]{Proposition}
\newtheorem{lemma}[theorem]{Lemma}
\newtheorem{corollary}[theorem]{Corollary}
\theoremstyle{definition}
\newtheorem{defn}[theorem]{Definition}
\theoremstyle{remark}
\newtheorem{remark}[theorem]{Remark}
\theoremstyle{remark}
\newtheorem{remarks}[theorem]{Remarks}
\theoremstyle{remark}
\newtheorem{ex}[theorem]{Example}
\theoremstyle{remark}
\tikzset{
  closed/.style = {decoration = {markings, mark = at position 0.5 with { \node[transform shape, xscale = .8, yscale=.4] {/}; } }, postaction = {decorate} },
  open/.style = {decoration = {markings, mark = at position 0.5 with { \node[transform shape, scale = .7] {$\circ$}; } }, postaction = {decorate} }
}
\begin{document}

      \title{Integration questions in separably good characteristics}
	\author{Marion Jeannin}
	\email{marion.jeannin@math.uu.se}
	\address{Mathematics Department\\ Uppsala University}
	\subjclass{14L15, 14G17, 14L24, 20G05}
	\keywords{Integration, Springer isomorphisms, saturation, modular Lie algebras, $\fppf$-topology}

\begin{abstract}
Let $G$ be a reductive group over an algebraically closed field $k$ of separably good characteristic $p>0$ for $G$. Under these assumptions a Springer isomorphism \linebreak $\phi : \mathcal{N}_{\red}(\mathfrak{g}) \rightarrow \mathcal{V}_{\red}(G)$ from the nilpotent scheme of $\mathfrak{g}$ to the unipotent scheme of $G$ always exists and allows to integrate any $p$-nilpotent element of $\mathfrak{g}$ into a unipotent element of $G$. One should wonder whether such a punctual integration can lead to an integration of restricted $p$-nil $p$-subalgebras of $\mathfrak{g}= \Lie(G)$. We provide a counter-example of the existence of such an integration in general, as well as criteria to integrate some restricted $p$-nil $p$-subalgebras of $\mathfrak{g}$ (that are maximal in a certain sense). This requires to generalise the notion of infinitesimal saturation first introduced by P. Deligne and to extend one of his theorem on infinitesimally saturated subgroups of $G$ to the previously mentioned framework.

\center{\textsc{Résumé}}

Soit $G$ un groupe réductif défini au-dessus d'un corps algébriquement clos $k$ de caractéristique $p>0$ supposée séparablement bonne pour $G$. Sous ces hypothèses, il est possible d'associer à chaque élément $p$-nilpotent de $\Lie(G)$ un élément unipotent de $G$. L'existence de cette intégration ponctuelle conduit à s'interroger sur l'existence d'une intégration ``structurelle'', à savoir : étant donnée une sous-algèbre de Lie $p$-nil $\mathfrak{u} \subset \Lie(G)$ existe-t-il un sous-groupe unipotent lisse et connexe $U\subset G$ tel que $\Lie(U) = \mathfrak{u}$ ? Cet article fournit des contre-exemples à l'existence d'une intégration structurelle en général, ainsi que des critères sur les sous-algèbres de Lie $p$-nil de $\Lie(G)$ pour lesquels une telle intégration est possible.

\end{abstract}
\maketitle

\section{Introduction}
Let $k$ be an algebraically closed field and $G$ be a $k$-group. We denote:
\begin{itemize}
\item by $\mathfrak{g}$ its Lie algebra,
\item by $G^0$ the connected component of identity,
\item by $G_{\red}$ the reduced part of $G$.
\end{itemize} 

Assume that $G$ is a reductive group. When $k$ is of characteristic $0$, the classical theory comes with the well-defined exponential map which allows to integrate any nilpotent element $x \in \mathfrak{g}$ into a unipotent element $\exp(x) \in G$. This enables to define the Baker--Campbell--Hausdorff law which is useful to endow any nilpotent Lie subalgebra of $\mathfrak{g}$ with a group law. By this process, the aforementioned Lie subalgebra becomes a unipotent group isomorphic to a unipotent subgroup of $G$. To summarize, when $k$ is of characteristic $0$: 
\begin{enumerate}
\item any nilpotent subalgebra of $\mathfrak{g}$ can be integrated into a unipotent smooth connected subgroup $U \subseteq G$ (meaning that $\Lie(U) \cong \mathfrak{u}$ as Lie algebras),
\item the exponential map induces an equivalence of categories between the category of finite dimensional nilpotent $k$-Lie algebras and the category of unipotent algebraic $k$-groups (see for example \cite[IV, \S2, n\degree 4, Corollaire 4.5]{DG}).
\end{enumerate}

If now the field $k$ is of characteristic $p>0$, one should try to determine whether it is possible to define analogues of the previously mentioned tools in order to integrate $p$-nil subalgebras of $\mathfrak{g}$. As we will explain in section \ref{sous-annexe_restreinte}, these $p$-nil subalgebras are the adequate objects to consider in characteristic $p>0$ for integration questions. The first step would be to get a punctual integration, that is, to find a way to integrate $p$-nilpotent elements of $\mathfrak{g}$ into unipotent elements of $G$. This is ensured as soon as there exists a $G$-equivariant isomorphism of reduced schemes between the reduced nilpotent scheme of $\mathfrak{g}$ (denoted by $\mathcal{N}_{\red}(\mathfrak{g})$) and the reduced unipotent scheme of $G$ (denoted by $\mathcal{V}_{\red}(G)$). Such a map $\phi : \mathcal{N}_{\red}(\mathfrak{g}) \rightarrow \mathcal{V}_{\red}(G)$ is called a Springer isomorphism. There is a technical subtlety here, which is detailed in section \ref{N_red}. For the purpose of this introduction it is only required to have in mind that in separably good characteristics (which is the framework of this article), the nilpotent scheme is reduced, while in non separably good characteristics neither the nilpotent nor the unipotent schemes are reduced. Moreover a definition of separably good integers is provided in section \ref{hyp_car}. 

Using \cite{MCNOPT} and \cite{MT2}, one can show that such an isomorphism always exists in separably good characteristics for $G$. This has been observed by P. Sobaje in \cite{SOB1}. Furthermore, the non separably good characteristics case is addressed in \cite[\S 7]{SOB2}. The author explains there why Springer isomorphisms fail to exist without this assumption. Moreover, and even if this is actually not a requirement here, one might wonder whether Springer isomorphisms are compatible with the $p$-power of the restricted Lie subalgebra one considers. We will come back to this point later in the article (see the preamble of section \ref{Springer_iso} and Remark \ref{remark_p_compatibility} ii)) but let us briefly explain the situation: such a compatibility is not always satisfied by Springer isomorphisms. Nevertheless, under mild conditions on $p$ and $G$, there is always a Springer isomorphism compatible with the $p$-structure (see \cite[Appendix 7]{McNinch2003}).

Unfortunately, the existence of a punctual integration is not sufficient to ensure a priori that any restricted $p$-nil $p$-subalgebra can be integrated into a unipotent smooth connected subgroup of $G$. If one tries to mimic the characteristic zero framework, this would actually require the Springer isomorphism $\phi$ to come with a well-defined analogue of the Baker--Campbell--Hausdorff law. This analogue would allow to make any $p$-nil subalgebra into a unipotent algebraic group. In order to exists, such a law requires even stronger conditions on $p$: let us denote by $\h(G)$ the Coxeter number of $G$. In \cite{SER2} J-P. Serre shows that when $p\geq\h(G)$ the Baker--Campbell--Hausdorff law is well-defined. Note that under this assumption on $p$, the series that defines the classical exponential map stops at the $p$-power for any nilpotent element. This is a consequence of G. McNinch's article \cite{MAUS} in which the author shows that when $p>\h(G)$ any $p$-nilpotent element has $p$-nilpotency order $1$. Let us briefly remind the reader of the proof: one actually shows that any nilpotent element satisfies $\ad^{\h(G)}(x)=0$. As the regular nilpotent elements (those with centraliser of minimal dimension) are dense in $\mathcal{N}_{\red}(\mathfrak{g})$, it is enough to show this equality when the nilpotent element $x$ is regular. In this case the result can be obtained by looking at the weights of an associated cocharacter for which the corresponding weight spaces $\mathfrak{g}_m$ are non trivial. There are at most $\h(G)$ such weights $m$ and $\ad(x)(\mathfrak{g}_m) \subseteq \mathfrak{g}_{m+2}$, hence the result. This in particular implies that, when $p>\h(G)$, the $p$-power (thus the restricted $p$-algebra structure) is compatible with the exponential map. Otherwise stated, if $x \in \mathfrak{g}$ is a $p$-nilpotent element, one indeed has $\exp(x^{[p]}) = (\exp(x))^{[p]}$ (as $x^{[p]} = 0$).

Making use of this, V. Balaji, P. Deligne and A. J. Parameswaran detail in \cite[\S 6]{BDP} the proof of the existence of an isomorphism of algebraic groups induced by the exponential map between the Lie algebra of the unipotent radical of a Borel subgroup and this unipotent radical (there, the Lie algebra is endowed with an algebraic group structure induced by the Baker--Campbell--Hausdorff law). The existence of this isomorphism implies the existence of the desired integration when $p>\h(G)$. The authors attribute this result to J.-P. Serre (see \cite{SER2}). Note that Serre's result has been refined by Seitz in \cite[Proposition 5.3]{Sei}, when $G$ is semi-simple. There, the author establishes the existence of an isomorphism of algebraic groups induced by the exponential map between the Lie algebra of the unipotent radical of a parabolic subgroup and the corresponding unipotent radical $U$ when $p$ is greater than the nilpotent class of $U$ (which is smaller than $\h(G)$). 

Once the result of V. Balaji, P. Deligne and A. J. Parameswaran has been established, one could have expected that the existence of this integration would induce, as in the characteristic $0$ framework, an equivalence of categories (this time between the category of $p$-nil Lie algebras and the category of unipotent algebraic groups). This unfortunately breaks down and justify to introduce the notion of infinitesimal saturation as defined by P. Deligne in \cite{D1} and attributed to J-P. Serre. Actually, if $p>\h(G)$ the exponential map induces a bijective correspondence between the restricted $p$-nil $p$-Lie subalgebras of $\mathfrak{g}$ and the infinitesimally saturated unipotent algebraic subgroups of $G$. All this content is explained in more details in section \ref{Springer_iso}.

In this article we focus on integration of $p$-nil subalgebras of $\mathfrak{g}$ when the characteristic $p$ is separably good for $G$, which is a weaker assumption than the characteristic $p>\h(G)$ condition. As we will show in sections \ref{candidat} and \ref{section_Deligne}, the $\fppf$-formalism introduced by P. Deligne in \cite[VIB Proposition 7.1 and Remark 7.6.1]{SGA31} provides a way of associating a smooth connected unipotent subgroup $J_{\mathfrak{u}} \subset G$ to any restricted $p$-nil $p$-subalgebra $\mathfrak{u} \subseteq \mathfrak{g}$. Unfortunately even if this subgroup is a natural candidate to integrate $\mathfrak{u}$, it is in general too big. One can indeed only expect the inclusion $\mathfrak{u} \subseteq \mathfrak{j_u} := \Lie(J_{\mathfrak{u}})$ to hold true. We provide in section \ref{section_contre_ex_integration} a counter-example to the existence of a general integration of restricted $p$-Lie algebras under the separably good characteristic assumption. 

Notwithstanding this observation, and as we will show in section \ref{ss_section_intégration_nil}, this technique still allows to integrate some restricted $p$-nil $p$-Lie algebras, as for example the $p$-radicals of Lie algebras whose normalisers are $\phi$-infinitesimally saturated (for $\phi$ a Springer isomorphism for $G$). The notion of $\phi$-infinitesimal saturation here extends the notion of infinitesimal saturation when the punctual integration comes from a Springer isomorphism that is not necessarily the truncated exponential map (as it happens for instance for small separably good characteristics for $G$). 

In section \ref{section_Deligne} we introduce this extended notion and show how, together with the aforementioned $\fppf$-formalism, this allows us to obtain a variation, in separably good characteristics, of a theorem of P. Deligne on the reduced part of infinitesimally saturated subgroups. More precisely, we show the following statement:
\begin{theorem}
Let $G$ be a reductive group over an algebraically closed field $k$ of characteristic $p>0$ which is assumed to be separably good for $G$. Let $\phi: \mathcal{N}_{\red}(\mathfrak{g}) \rightarrow \mathcal{V}_{\red}(G)$ be a Springer isomorphism for $G$ and let $N\subseteq G$ be a $\phi$-infinitesimally saturated subgroup. Then:
\begin{enumerate}
\item the subgroups $N_{\red}^0$ and $\Rad_U(N^0_{\red})$ are normal in $N$. Moreover, the quotient $N/N^0_{\red}$ is a $k$-group of multiplicative type;
\item in addition, suppose that the connected reduced subgroup $N^0_{\red}$ is reductive. Then there exists in $N^0$ a central subscheme $M$ of multiplicative type such that $(M^0\times N_{\red}^0)/\mu \cong N^0$, where $\mu$ is the kernel of $M^0 \times N_{\red}^0 \rightarrow N^0$.
\end{enumerate}
\label{generalisation_Deligne}
\end{theorem}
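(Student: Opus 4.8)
The plan is to mimic Deligne's original argument in \cite{D1}, replacing the truncated exponential by the Springer isomorphism $\phi$ throughout, and feeding in the $\fppf$-construction $\mathfrak{u}\mapsto J_{\mathfrak{u}}$ to produce unipotent subgroups from $p$-nil subalgebras that one builds along the way. First I would record the basic structural consequences of $\phi$-infinitesimal saturation: if $N$ is $\phi$-infinitesimally saturated and $x\in\mathfrak{n}:=\Lie(N)$ is $p$-nilpotent, then the image $\phi(x)$ (and hence the whole one-parameter subgroup it generates) lies in $N$; consequently $N$ contains enough unipotent elements that $N^0_{\red}$ is generated by the images under $\phi$ of the $p$-nilpotent elements of $\mathfrak{n}$ together with a torus, and $\Lie(N^0_{\red})$ contains all $p$-nilpotent elements of $\mathfrak{n}$. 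The key mechanism is that $N$ acts on itself by conjugation, hence on $\mathfrak{n}$, and this action preserves $p$-nilpotency; since $\phi$ is $G$-equivariant, conjugation by any $n\in N$ carries $\phi(x)$ to $\phi(\Ad(n)x)\in N$, so the subgroup generated by all such $\phi(x)$ — which is exactly $N^0_{\red}$, or at least its unipotent part — is normalised by $N$. This gives normality of $\Rad_U(N^0_{\red})$ and of $N^0_{\red}$ in $N$, the first assertion of part (1).

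For the statement that $N/N^0_{\red}$ is of multiplicative type, I would argue as follows. The quotient $N/N^0_{\red}$ is a $k$-group whose identity component is infinitesimal (since $N^0/N^0_{\red}$ is infinitesimal) and which has $N/N^0$ as its group of components; the point is to show its Lie algebra contains no nonzero $p$-nilpotent element and that it has no unipotent subgroups, forcing it to be of multiplicative type. Here $\phi$-infinitesimal saturation is used again: any $p$-nilpotent element of $\Lie(N)$ already lies in $\Lie(N^0_{\red})$ by the previous paragraph, so the Lie algebra of the quotient is "$p$-nil-free"; combined with the fact that a smooth connected unipotent subgroup of the quotient would lift (via the $\fppf$ $J$-construction applied to its Lie algebra) to produce $p$-nilpotent elements outside $\Lie(N^0_{\red})$, a contradiction. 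One then invokes the classification of $k$-groups with no unipotent part and no $p$-nilpotents in the Lie algebra to conclude multiplicative type.

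For part (2), assume $N^0_{\red}$ is reductive. I would consider the multiplication morphism $m: N^0_{\red}\times C \to N^0$, where $C$ denotes the centraliser (or connected centre) of $N^0_{\red}$ in $N^0$, and show it is faithfully flat with kernel a central subscheme $\mu$ of multiplicative type. Surjectivity at the level of reduced points is clear since $N^0/N^0_{\red}$ is infinitesimal; the content is that $C$ is "large enough" infinitesimally, i.e. that $C$ maps onto $N^0/N^0_{\red}$. This is where $\phi$-infinitesimal saturation and reductivity of $N^0_{\red}$ combine: $\Lie(N^0)=\Lie(N^0_{\red})\oplus\mathfrak{z}$ for a complement $\mathfrak{z}$ on which $N^0_{\red}$ acts, this complement contains no $p$-nilpotents (by part (1)), hence by reductivity and a weight argument $\mathfrak{z}$ is contained in $\Lie$ of a subgroup of multiplicative type, which one checks is central; setting $M$ to be (a suitable infinitesimal thickening of) the connected centre of $N^0_{\red}$ inside $N^0$ and $\mu=\ker(M^0\times N^0_{\red}\to N^0)$ gives the claimed isomorphism after verifying flatness by a dimension/Lie-algebra count.

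The main obstacle, I expect, is precisely the step in part (2) of showing that the infinitesimal complement $\mathfrak{z}$ integrates to a \emph{central} subscheme of multiplicative type rather than merely to some infinitesimal subgroup. In Deligne's setting with $p>\h(G)$ the truncated exponential is a homomorphism on each $p$-nil subalgebra, which makes the bookkeeping transparent; here $\phi$ is only an isomorphism of varieties, so one must control how the $J$-construction behaves with respect to direct-sum decompositions of Lie algebras and with respect to the adjoint action — essentially one needs that $J_{\mathfrak{z}}$ is trivial (as $\mathfrak{z}$ has no $p$-nilpotents) so that the obstruction to centrality is purely a statement about the infinitesimal group scheme $N^0/N^0_{\red}$ and its action, which must be shown to factor through a group of multiplicative type using reductivity of $N^0_{\red}$ and the good-characteristic hypothesis. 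Handling this without the homomorphism property of the exponential, by instead leveraging $G$-equivariance of $\phi$ together with the results of section \ref{section_Deligne} on $\phi$-saturated subgroups, is the crux.
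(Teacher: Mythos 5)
There is a genuine gap in your part (1), and part (2) is left open. The subgroup you propose to use — the one generated by the images $\phi(x)$ for \emph{all} $p$-nilpotent $x\in\mathfrak{n}$ — is indeed normalised by $N$ (your equivariance argument is the same Yoneda-type computation as in lemma \ref{inclusion_normalisateurs_J_u_et_u}), but it is in general neither $N^0_{\red}$ nor $\Rad_U(N^0_{\red})$: when $N^0_{\red}$ is reductive it is essentially the semisimple part, so it misses the central torus of $N^0_{\red}$, and it strictly contains the unipotent radical. Hence neither normality statement follows from your sketch. The paper's proof needs three ingredients you do not supply: (a) a maximal connected subgroup $H\subseteq N$ of multiplicative type with $T=H\cap N_{\red}$, together with the faithful flatness of $N_{\red}\times H\to N$ (\cite[lemma 2.14]{BDP}), which is what reduces every stability statement to $H$-stability (your ``conjugation by any $n\in N$'' only sees the reduced part; the whole difficulty is the infinitesimal part of $N$, which is exactly what $H$ captures); (b) the precise generation statements $N^0_{\red}=E_{Z^0,J_{\mathfrak{n}^{\alpha}}}$ and $\Rad_U(N_{\red})=E_{W,J_{\mathfrak{v}^{\alpha}}}$, proved by an equality of Lie algebras of smooth connected groups (lemma \ref{action_H_sur_centralisateur} and the proof of the theorem); and (c) for $\Rad_U(N^0_{\red})$, Deligne's non-degenerate pairing argument (lemma \ref{Lie_rad_unip-stable_par_H}) showing that the weight spaces $\mathfrak{v}^{\pm\alpha}$ of $\Lie(\Rad_U(N_{\red}))$ are $H$-stable — your proposal has no substitute for this, and without it there is no reason the infinitesimal part of $N$ preserves the unipotent radical rather than merely the set of unipotent elements. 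Your route to ``$N/N^0_{\red}$ of multiplicative type'' is also shaky: $\Lie(N/N^0_{\red})$ is not $\mathfrak{n}/\mathfrak{n}_{\red}$ in general (the quotient map need not be surjective on Lie algebras), so ``no $p$-nilpotents in $\mathfrak{n}$ outside $\mathfrak{n}_{\red}$'' does not transfer to the quotient, and your lifting argument only excludes smooth connected unipotent subgroups, which is vacuous for an infinitesimal-by-étale quotient (the actual danger is $\alpha_p$- or $\mathbb{Z}/p$-type pieces). The paper avoids all of this: once $N=H\cdot N_{\red}$ as an $\fppf$-sheaf and $T=H\cap N_{\red}$, one gets directly $N/N_{\red}\cong H/T\cong D$, a diagonalisable group.

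For part (2) you explicitly flag the central step as unresolved, so the proposal does not prove it. For the record, the paper's argument needs no integration of an infinitesimal complement $\mathfrak{z}$ and no further input from $\phi$ at that stage: one sets $M=\ker\left(H\rightarrow \Aut(N_{\red})\right)$, notes $M^0$ is of multiplicative type (closed in $H$) and central (since $N$ is generated by $H$ and $N_{\red}$ and $M^0$ centralises $N_{\red}$), and proves surjectivity of $M^0\times N_{\red}\rightarrow N$ by rigidity: the $H$-action on the reductive group $N_{\red}$ fixes $T$, hence factors through $T^{\Ad}$ by \cite[XXIV, Proposition 2.11]{SGA33}, giving an exact sequence $1\rightarrow M\rightarrow H\rightarrow T^{\Ad}\rightarrow 1$; thus $H$ is generated by $M$ and $T$, and $M$ by $M^0$ and $M_{\red}\subset N_{\red}$, which yields the epimorphism and the isomorphism $(M^0\times N^0_{\red})/\mu\cong N^0$. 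Your proposed centraliser $C$ of $N^0_{\red}$ in $N^0$ is in the same spirit, but the claimed decomposition $\Lie(N^0)=\Lie(N^0_{\red})\oplus\mathfrak{z}$ and the centrality of its integration are precisely what you would still have to prove.
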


Section \ref{appendice_Lie}, finally, is a miscellany of technical results used in the proofs of several statements of this paper.

Let us moreover stress out that even if after reading this introduction an integration seems to be possible only under very specific and restrictive conditions on the restricted $p$-nil $p$-subalgebras, the results presented in this article still allow to extend theorems classically known in characteristic zero to the characteristic $p$ framework. For instance analogues of Morozov Theorem can be obtained with these techniques (see \cite{JEAthese}, this will also be developed in more details in a future article). The latter states the following: let $G$ be a reductive group over a field $k$ of characteristic $0$, if $\mathfrak{u}\subset \mathfrak{g}$ is a nilpotent algebra which is the nilradical of its normaliser $N_{\mathfrak{g}}(\mathfrak{u})$, this normaliser is the Lie algebra of a parabolic subgroup of $G$. Obtaining analogues of this statement was the first motivation to study the questions raised in this paper. A subsidiary part of the content of this article comes from the author's Ph.D. manuscript \cite{JEAthese}.

\section{Context}
\subsection{Hypotheses on the characteristic}
\label{hyp_car}
Let $k$ be a field of characteristic $p>0$ and $G$ be a reductive $k$-group. This section is dedicated to discuss usual assumptions made on the characteristic of $k$. We refer the reader to \cite{Stei_prime} and \cite[\S 0.3]{SPR} for a definition and an exhaustive list of torsion characteristics for $G$. Good and very good characteristics are discussed for instance in the preamble of \cite{LMT} or in \cite[\S 2]{HER}. We only recall here some useful facts.

In what follows $k$ is assumed to be algebraically closed. When $G$ is a semisimple $k$-group the following statement is a consequence of \cite[Theorem 2.2 and Remark a)]{LMT} :

\begin{corollary}[(Corollary of {\cite[Theorem 2.2]{LMT}})]
Let $G$ be a semisimple group over an algebraically closed field $k$ of characteristic $p>0$ which is not of torsion for $G$. Let $\mathfrak{u} \subseteq \mathfrak{g}$ be a restricted $p$-nil $p$-subalgebra (see section \ref{sous-annexe_restreinte}). Then there exists a Borel subgroup $B\subset G$ such that $\mathfrak{u}$ is a subalgebra of $\mathfrak{b}:=\Lie(B)$.
\label{corollaire_LMT}
\end{corollary}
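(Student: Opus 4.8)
The plan is to obtain the statement as an essentially immediate consequence of \cite[theorem 2.2 and remark a)]{LMT}; the only real point to address is to reconcile the restricted notion of $p$-nilpotency used here with the geometric notion of nilpotency in $\mathcal N(\mathfrak g)$ to which that theorem applies.

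First I would record that a $p$-nil subalgebra of $\mathfrak g$ consists, in particular, of nilpotent elements of $\mathfrak g$. Indeed, choose a faithful representation $\rho\colon G\hookrightarrow\GL(V)$; since it is a closed immersion, $\mathfrak g=\Lie(G)$ is a restricted subalgebra of $\gl(V)$ for the $p$-operation induced by the associative $p$-th power of $\End(V)$, so that for $x\in\mathfrak g$ the relation $x^{[p]^{n}}=0$ (for some $n$) is equivalent to the nilpotency of the endomorphism $\rho(x)$ of $V$, hence --- under our standing hypothesis on $p$, see \ref{sous-annexe_restreinte} --- to $x\in\mathcal N(\mathfrak g)$. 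Thus the given $\mathfrak u$ is a Lie subalgebra of $\mathfrak g$ all of whose elements are nilpotent, and it is this reformulation that I would feed into the cited theorem.

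Second, I would invoke \cite[theorem 2.2]{LMT}: as $k$ is algebraically closed, $G$ is semisimple, and $p$ is not a torsion prime for $G$ --- this last point being exactly the hypothesis permitted by remark a), which relaxes the a priori stronger assumption on $p$ --- every Lie subalgebra of $\mathfrak g$ consisting of nilpotent elements is contained in $\Lie(\Rad_U(P))$ for some parabolic subgroup $P\subseteq G$. Picking any Borel subgroup $B$ of $P$, which necessarily contains $\Rad_U(P)$, yields
\[
\mathfrak u\;\subseteq\;\Lie\!\big(\Rad_U(P)\big)\;\subseteq\;\Lie\!\big(\Rad_U(B)\big)\;\subseteq\;\Lie(B)=\mathfrak b,
\]
which is precisely the desired conclusion.

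I do not expect any substantial obstacle along the way: the whole weight of the argument rests on \cite[theorem 2.2]{LMT} itself, whose proof combines Borel--Tits type arguments with the theory of associated cocharacters / $\mathfrak{sl}_2$-triples attached to nilpotent elements, and it is there that the non-torsion hypothesis on $p$ is genuinely used. For the present corollary the only things to be careful with are (i) the identification of ``$p$-nil'' with ``consisting of nilpotent elements'', i.e. the compatibility between the restricted structure of $\mathfrak g$ and that of a faithful linear envelope, and (ii) verifying that ``not of torsion for $G$'' is indeed the exact hypothesis under which the cited statement (through remark a)) is available; both are routine.
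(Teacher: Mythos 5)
Your proposal is correct and takes essentially the same route as the paper, which states this corollary without a separate proof, presenting it as a direct consequence of \cite[theorem 2.2 and remark a)]{LMT}. The two points you spell out --- that a $p$-nil subalgebra consists of ($\mathfrak{g}$-)nilpotent elements, which is the compatibility of the restricted structure with a faithful linear embedding discussed in \ref{sous-annexe_restreinte}, and the harmless passage from the Lie algebra of the unipotent radical of a parabolic (or Borel) subgroup to the Borel subalgebra $\mathfrak{b}$ itself --- are exactly the routine verifications the paper leaves implicit.
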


\begin{remarks}
The following remarks will be of main importance in the integration process described in this article:
\begin{enumerate}
	\item the subalgebra $\mathfrak{u}$ is actually contained in the Lie algebra of the unipotent radical of a Borel subgroup $B \subseteq G$. Indeed $\mathfrak{b}$ is nothing but the semidirect sum of the Lie algebra of the unipotent radical of $B$, denoted by $\radu(B)$ and the Lie algebra of a maximal torus of $G$, denoted by $\mathfrak{t}$. This last factor contains no $p$-nilpotent element (see the preamble of the subsection \ref{sous-annexe_restreinte}), whence the inclusion $\mathfrak{u} \subset \radu(B)$.
	\item The first point of this remark actually allows to generalise the corollary to any reductive $k$-group $G$, when $k$ is an algebraically closed field of characteristic $p>0$ that is not a torsion integer for $G$. Let $Z(G)$ be the center of $G$. Let also $\pi: G \rightarrow G' := G/Z^0_{\red}(G)$ be the quotient map and set $\mathfrak{u}':= \Lie(\pi)(\mathfrak{u})$. As $\Lie(Z^0_{\red}(G))$ is the Lie algebra of a torus it has no $p$-nilpotent element (this is detailed at the end of the proof of Lemma \ref{radical_reductif_centre}, note that the assumption made in the statement of this lemma is not necessary to prove this specific fact). Therefore one has $\mathfrak{u} \cong \mathfrak{u'}$. By what precedes there exists a Borel subgroup $B' \subset G'$ such that $\mathfrak{u'} \subseteq \radu(B') \subset \mathfrak{b'}$. Let $B= \pi^{-1}(B')$ be the preimage of $B'$. As $\radu(B') \cong \radu(B)$ one can always assume that $\mathfrak{u}$ is the subalgebra of the Lie algebra of the unipotent radical of a Borel subgroup of $\mathfrak{b} \subseteq \mathfrak{g}$.
	\end{enumerate}
\end{remarks}

Separably good characteristics are defined by J. Pevtsova and J. Stark in \cite[Definition 2.2]{PevSta}: 
\begin{enumerate}
	\item when $G$ is semisimple, let $G^{\Sc}$ be the simply connected cover of $G$. The characteristic $p$ is separably good for $G$ if $p$ is good for $G$ and if the isogeny $G^{\Sc} \rightarrow G$ is separable. 
	\item When $G$ is reductive, the characteristic $p$ is separably good for $G$ if it is separably good for its derived group $[G,G]$. 
	\end{enumerate}
\noindent As underlined by the two authors, if $p$ is very good for $G$, it is also separably good. Nevertheless, this last condition is only restrictive for type $A$, which is the only type for which very good and separably good characteristics do not coincide. As an example $p$ is separably good but not very good for $\SL_p$ or $\GL_p$. However, it is not separably good nor very good for $\PGL_p$. 

Moreover, let $G$ be a reductive algebraic group over an algebraically closed field $k= \bar{k}$ and consider a maximal torus $T \subsetneq G$. The tuple $\mathcal{R}(G) = (X(T), \Phi, Y(T), \Phi^{\vee})$ whose components are respectively the associated group of characters, the root system, the group of cocharacters and the coroot system, is a root datum for $G$. This root datum is unique up to isomorphism (see \cite[XXII, 2.6]{SGA33}). A prime number is pretty good for $G$ if, given any subset $\Phi'\subseteq \Phi$ both the groups $X(T)/\mathbb{Z}\Phi'$ and $Y(T)/\mathbb{Z}\Phi'^{\vee}$ have no $p$-torsion. Note that these definitions still make sense when $k$ is no longer algebraically closed but this goes beyond the framework of this article. Once again, this condition answers type $A$-phenomenon. It is studied by S. Herpel in \cite{HER}. In particular, pretty good and very good primes are the same when $G$ is semisimple (see \cite[Lemma 2.12]{HER}). For instance $p$ is not pretty good for $\SL_p$. However, if $G$ is an arbitrary group, being a very good prime is a more restrictive condition, indeed $p$ is pretty good but not very good for $\GL_p$ (see \cite[Example 2.13]{HER}). Finally, as explained in \cite[2.4]{Stei_prime}, if $p$ doesn't divide the order of $X(T)/\mathbb{Z}\Phi$ then $p$ is separably good for $G$. Hence any pretty good prime is separably good for $G$. In particular, as $p$ is not separably good for $\PGL_p$ it is not pretty good either. To summarize, one has the following chain of implications:
\[\text{Very good} \implies \text{pretty good} \implies \text{separably good} \implies \text{good} \implies \text{non torsion}.\]

\subsection{From characteristic zero to positive characteristics, defining the good analogues: sorites on restricted \texorpdfstring{$p$}{Lg}-Lie algebras}
	\label{sous-annexe_restreinte}

Before going any further, one needs to introduce the good analogues in characteristic $p>0$ for the objects involved in the characteristic zero setting. This is done in this section. The results presented below are stated in the most general way. In particular, we do not assume a priori (and unless explicitely stated) in this subsection that the field $k$ is algebraically closed.

Let $\mathfrak{g}$ be a finite dimensional restricted $p$-nil $p$-Lie algebra over $k$. In what follows we denote by $[p]$ the $p$-structure for $\mathfrak{g}$. Let us stress out that, in particular, the Lie algebra of any $k$-group scheme $G$ is endowed with such a $p$-structure (see \cite[II, \S7, n\degree 3.4]{DG}). Moreover, for any algebraic subgroup $H \subset G$, the $p$-structure on $\Lie(H) := \mathfrak{h}$ inherited from the group is compatible with the one on $\mathfrak{g}$. In other words $\mathfrak{h}$ is a restricted $p$-subalgebra of $\mathfrak{g}$. We refer the reader to \cite[\S 2 Définition]{FS} for general theory of restricted $p$-Lie algebras.

Let $k$ be a field and let $\mathfrak{g}$ be a $k$-Lie algebra. As a reminder:
\begin{enumerate}
	\item the solvable radical (or radical) of $\mathfrak{g}$, denoted by $\rad(\mathfrak{g})$, is the largest solvable ideal of $\mathfrak{g}$ (see \cite[\S 1.7, Definition]{FS}),
	\item the nilradical of $\mathfrak{g}$, denoted by $\nil(\mathfrak{g})$, is the largest nilpotent ideal of $\mathfrak{g}$. In particular all its elements are $\ad$-nilpotent, by a corollary of Engel Theorem (see for example \cite[\S 4 n\degree 2 Corollaire 1]{BOU1}). When $k$ is of characteristic $0$, the nilradical is nothing but the set of $\ad$-nilpotent elements of the radical of $\mathfrak{g}$ (see \cite[\S 1, Corollary 3.10]{FS} and \cite[\S 5, Corollaire 7]{BOU1}). Let us stress out that the equality $\nil(\mathfrak{g}/\nil(\mathfrak{g}))= 0$ is not always satisfied when $k$ is of characteristic $p>0$ (see \cite[p.~20]{FS} for a counter-example).
	\item A subalgebra $\mathfrak{h} \subseteq \mathfrak{g}$ is nil if any element of $\mathfrak{h}$ is $\ad$-nilpotent for the bracket on $\mathfrak{g}$. Any nil and finite dimensional $k$-Lie algebra is nilpotent.
\end{enumerate}

One may wonder whether these classical objects inherit of a $p$-structure compatible with the one of $\mathfrak{g}$:

\begin{lemma}
Let $\mathfrak{h}$ be a restricted $p$-Lie algebra over $k$. Then $\rad(\mathfrak{h})$ is a restricted $p$-subalgebra of $\mathfrak{h}$.
\label{radical_p_ss_alg}
\end{lemma}

\begin{proof}
Let us consider the morphism of Lie algebras $\mathfrak{h} \twoheadrightarrow \mathfrak{h/\rad(h)}$. According to \cite[1, \S7, Theorem 7.2]{FS} one has $\rad(\mathfrak{h/\rad(h)}) = 0$, thus the center $\mathfrak{z}_{\rad(\mathfrak{h/\rad(h)})}$ is trivial (because $\mathfrak{z_g} \subseteq \rad(\mathfrak{g})$, see for instance the first lines of the proof of Lemma \ref{radical_reductif_centre}). By \cite[2.3, Exercise 7]{FS}, the radical of $\mathfrak{h}$ is a $p$-Lie subalgebra.
\end{proof}

Assume the Lie algebra $\mathfrak{g}$ derives from an affine algebraic $k$-group. Let $\rho : G \rightarrow \GL(V)$ be a faithful representation of finite dimension. An element $x \in \mathfrak{g}$ is nilpotent, or $\mathfrak{g}$-nilpotent, if $\Lie(\rho)(x)$ is a nilpotent element of $\mathfrak{\gl}(V)$ (let us stress out that $\Lie(\rho)$ is still injective because the Lie functor is left exact (see \cite[II,\S4, 1.5]{DG})). On the same way, an element $x \in \mathfrak{g}$ is semisimple if $\Lie(\rho)(x)$ is a semisimple element of $\mathfrak{\gl}(V)$. These notions are independent from the choice of the faithful representation $\rho$ (see \cite[I.4.4, Theorem]{BORlag}). Let us emphasize that when $k$ is perfect any $x \in \mathfrak{g}$ has a Jordan decomposition in $\mathfrak{g}$ (see for example \cite[I.4.4, Theorem]{BORlag}). 

More generally, if one does no longer consider that $\mathfrak{g}$ is the Lie algebra of an algebraic group, then: 
\begin{itemize}
\item if $\mathfrak{g}$ is a semisimple Lie algebra over a field of characteristic $0$ (whatever the characteristic,  semisimple Lie algebras are those with trivial solvable radical), any element $x \in \mathfrak{g}$ has a unique Jordan decomposition (see for example \cite[\S 6 n\degree 3 Théorème 3]{BOU1}). 
\item Similarly, if $k$ is a perfect field of characteristic $p>0$ and $\mathfrak{g}$ is finitely generated restricted $p$-Lie algebra, a decomposition $x = x_s + x_n$ (with $x_s$ semisimple and $x_n$ nilpotent) always exists, with the additional condition for the nilpotent part to be $p$-nilpotent (see \cite[2.3 Theorem 3.5]{FS}). 
\end{itemize}
An element $x \in \mathfrak{g}$ is $p$-nilpotent if there exists an integer $m \in \mathbb{N}$ such that $x^{[p^m]}=0$. When it exists, the smallest $m \in \mathbb{N}$ such that $x^{[p^m]}=0$ is called the order of $p$-nilpotency of $x$. In this framework, an element $x \in \mathfrak{g}$ is $p$-semisimple if $x$ belongs to the restricted $p$-Lie algebra generated by $x^{[p]}$. Finally, an element $x \in \mathfrak{g}$ is toral if $x^{[p]} = x$. According to \cite[\S 2 Proposition 3.3]{FS} and the remark that follows this proposition, both definitions of semisimplicity are equivalent. In what follows an element is thus said to be $p$-semisimple (respectively $p$-nilpotent) if it is semisimple (respectively $\mathfrak{g}$-nilpotent). This equivalence of definitions is a consequence of Iwasawa Theorem (see \cite{IWA}) which ensures that any Lie subalgebra of finite dimension over a field of characteristic $p>0$ has a faithful representation. This result has afterwards been extended by N. Jacobson to the framework of finite dimensional restricted $p$-Lie algebras with the additional constraint that the involved representation is compatible with the $p$-structure (see \cite{JAC} and \cite[I, \S4, Theorem I.4.2]{SEL}). 

Let $k$ be a field of characteristic $p>0$. Let $\mathfrak{h}$ be a restricted $p$-algebra (as previously mentioned this is in particular the case if $\mathfrak{h}$ derives from a subgroup $H \subset G$). The restricted $p$-subalgebra $\mathfrak{h}$ is $p$-nilpotent if there exists an integer $n \in \mathbb{N}$ such that $\mathfrak{h}^{[p^n]} = 0$. When $\mathfrak{g}$ is of finite dimension any restricted $p$-subalgebra which is $p$-nilpotent is also $p$-nil (that is, any of its elements are $p$-nilpotent). 

It is worth noting that the study of ideals of $\mathfrak{g}$ that consist only in semisimple elements can also be very instructive. Let us recall the following result as an illustration (see \cite[Proposition 2.13]{BorT2}): let $\mathfrak{g}$ be the Lie algebra of a reductive $k$-group $G$. We consider the action of $G$ on $\mathfrak{g}$ by conjugation. Let $\mathfrak{j} \subseteq \mathfrak{g}$ be an ideal which in $G$-stable. Then $\mathfrak{j}$ consists only in semisimple elements if and only if $\mathfrak{j} \subseteq  \mathfrak{z_g}$. 


Let us finally underline that, although in positive characteristic the nilradical of a restricted $p$-algebra is well-defined, it does no longer satisfy the properties it had in characteristic $0$. Hence the necessity of introducing the following object which appears to be, under some additional hypotheses, the good analogue to consider in characteristic $p>0$:

\begin{defn} Let $\mathfrak{h}$ be a restricted $p$-algebra. The $p$-radical of $\mathfrak{h}$, denoted by $\rad_p(\mathfrak{h})$, is the maximal $p$-nilpotent $p$-ideal of $ \mathfrak{h}$ (such an object exists, see for instance \cite[2.1, Corollary 1.6]{FS}).
\label{def_p-rad} 
\end{defn}

Let us also stress out that the Lie algebra of the unipotent radical of a connected algebraic group $H$, denoted by $\mathfrak{\radu}(H)$, is an ideal of $\nil(\mathfrak{h})$ (as $U$ is a unipotent normal subgroup of $\Rad(H)$). We aim to compare these different objects: 

\begin{lemma}
Let $\mathfrak{h}$ be a restricted $p$-algebra. Then:
\begin{enumerate}
	\item the inclusions $\rad_p(\mathfrak{h}) \subseteq \nil(\mathfrak{h}) \subseteq \rad(\mathfrak{h})$ are satisfied,
	\item the $p$-radical of $\mathfrak{h}$ is a subset of the set of all $p$-nilpotent elements of $\rad(\mathfrak{h})$.
	\item Let us denote by $\mathfrak{z_h}$ the center of $\mathfrak{h}$. The equality $\rad_p(\mathfrak{h}) = \nil(\mathfrak{h})$ holds true if and only if the inclusion $\mathfrak{z_h} \subseteq \rad_p(\mathfrak{h})$ is satisfied.
\end{enumerate} 
\label{inclusion_p-struct}
\end{lemma}

\begin{proof}
We show each point of the lemma separately:
\begin{enumerate}
\item the inclusion $\rad_p(\mathfrak{h}) \subseteq \nil(\mathfrak{h})$ is clear as $\rad_p(\mathfrak{h})$ is a nil ideal of $\mathfrak{h}$ (because it is $p$-nil). Hence it is a nilpotent ideal of $\mathfrak{h}$ because the Lie algebras involved here are of finite dimension. 

The second inclusion is also direct as any nilpotent ideal is in particular solvable (see for example \cite[\S1.5 Remark]{FS}). Hence the first point of the lemma is shown.

\item This last inclusion being satisfied and $\rad_p(\mathfrak{h})$ being $p$-nil, the restricted $p$-ideal is necessarily contained in the set of all $p$-nilpotent elements of $\rad(\mathfrak{h})$. This ends the proof of (ii).

\item The center of $\mathfrak{h}$ is an abelian ideal of $\mathfrak{h}$. It is therefore contained in the nilradical of $\mathfrak{h}$. Thus if one has the equality $\nil(\mathfrak{h})=\rad_p(\mathfrak{h})$, one also has the inclusion $\mathfrak{z_h}\subseteq \rad_p(\mathfrak{h})$. 

Reciprocally, assume the inclusion $\mathfrak{z_h} \subseteq \rad_p(\mathfrak{h})$ to be satisfied and let us show that any $x \in \nil(\mathfrak{h})$ is $p$-nilpotent. First, it is $\ad$-nilpotent according to Corollary \cite[\S4 n\degree 2 Corollaire 1]{BOU1} because the ideal $\nil(\mathfrak{h})$ is nilpotent. Moreover, as the Lie algebra $\mathfrak{h}$ is endowed with a $p$-structure, there exists an integer $n$ such that $\ad(x)^{p^n}= 0 = \ad(x^{[p^n]}).$ In other words $x^{[p^n]}$ belongs to the center of $\mathfrak{h}$. As we assumed the inclusion $\mathfrak{z_h} \subseteq \rad_p(\mathfrak{h})$ to hold true, the element $x^{[p^n]}$  is actually $p$-nilpotent (the $p$-radical being $p$-nil). Hence there exists an integer $m$ such that $(x^{[p^n]})^{[p^m]} = (x ^{[p^{n+m}]}) =0$, whence the $p$-nilpotency of any element of $\nil(\mathfrak{h})$. This implies that $\nil(\mathfrak{h})$ is a restricted $p$-ideal $p$-nil of $\mathfrak{h}$, since the nilradical of $\mathfrak{h}$ is a restricted $p$-ideal according to Lemma \cite[2.3, Exercise 5d]{FS}. This leads to the desired equality. Thus we have shown (iii).
\end{enumerate}
\end{proof}

When $\mathfrak{g}$ derives from a smooth connected algebraic $k$-group $G$ these objects should be compared with the Lie algebra of the radical (respectively of the unipotent radical) of $G$.

\begin{lemma}
Let $k$ be a field of characteristic $p \geq 3$ and $G$ be a reductive $k$-group. Then the equalities $\mathfrak{z_g} = \rad(\mathfrak{g}) = \nil(\mathfrak{g})$ hold true.
 \label{radical_reductif_centre}
\end{lemma}

\begin{remark}
The assumption on the characteristic allows a uniform proof of the above lemma. Notwithstanding this point, it is worth noting that the characteristic $2$ case can be handled by a case-by-case analysis (by making use of \cite[table 1]{Ho}). Moreover, Lemma \ref{nil_red} below provides the equality $\mathfrak{z_g} = \nil(\mathfrak{g})$ (which is a weaker result) in any characteristic $p>0$. This last statement appears as a Corollary of \cite[Lemma 2.1]{VAS}.
\label{rem_car_2}
\end{remark}

The following lemma is useful in the proof of Lemma \ref{radical_reductif_centre}:
\begin{lemma}
Let $\widetilde G$ and $G$ be two reductive $k$-groups and let us consider the following central exact sequence of algebraic groups:
\begin{figure}[H]
\begin{center}
\[\begin{tikzpicture} 

 \matrix (m) [matrix of math nodes,row sep=2em,column sep=4.8em,minimum width=2em,  text height = 1.5ex, text depth = 0.25ex]
  {
    1 & S & \widetilde{G} & G & 1.\\
  };
  \path[-stealth]
  	(m-1-1) edge (m-1-2)
    (m-1-2) edge node[above] {$\iota$}(m-1-3) 
    (m-1-3) edge node[above] {$\pi$} (m-1-4) 
    (m-1-4) edge (m-1-5)
    ;  	
\end{tikzpicture}\]
\end{center}
\end{figure}
\noindent Let also $\widetilde T \subseteq \widetilde{G}$ be a maximal $k$-torus and set $T:=\widetilde{T} /S$.
Then
$\Lie(\pi)(\widetilde{\mathfrak{g}})$ is an ideal of $\mathfrak{g}$ and the quotient $\mathfrak{g}/ \Lie(\pi)(\widetilde{\mathfrak{g}})$ is isomorphic to $\mathfrak{t}/ \Lie(\pi)(\widetilde{\mathfrak{t}})$ as a $k$-Lie algebra.
In particular if $k$ is of characteristic $p>0$, the restricted $p$-Lie algebra $\mathfrak{g}/ \Lie(\pi)(\widetilde{\mathfrak{g}})$ is toral. 
\label{quotient_central}
\end{lemma}

\begin{proof}
The center of a reductive group is a diagonalisable subgroup (see for instance \cite[XXII, Corollaire 4.1.6]{SGA33}). The exact sequence of the lemma being central, the $k$-group $S$ is diagonalisable. Indeed any subgroup of a diagonalisable group defined over a field is diagonalisable (see \cite[IX, Proposition 8.1]{SGA32}). Let $E$ be a $k$-torus such that $S^0 \subseteq E$. Let us stress out that such an object always exists because the maximal connected subgroups of multiplicative type of a reductive group over a field are the maximal tori (see Corollary \ref{tm_connexe_max_cas_red}). Consider the following commutative diagram of algebraic $k$-groups:
\begin{figure}[H]
\begin{center}
\[\begin{tikzpicture} 

 \matrix (m) [matrix of math nodes,row sep=2em,column sep=4.8em,minimum width=2em,  text height = 1.5ex, text depth = 0.25ex]
  {	  & 1 & 1\\
	  & \mathbb{G}_m^r & \mathbb{G}_m^r\\
  	1 & E & G' & G & 1\\
    1 & S^0 & \widetilde{G} & G & 1,\\
    & 1 & 1\\
  };
  \path[-stealth]
  	(m-2-2) edge (m-2-3)
    
    (m-2-2) edge (m-1-2) 
    (m-2-3) edge (m-1-3) 
    
    (m-3-2) edge (m-2-2) 
    (m-3-3) edge node[left] {$q$} (m-2-3) 
    
	(m-4-2) edge (m-3-2) 
    (m-4-3) edge node[left] {$i$} (m-3-3)     
    (m-4-4) edge node[left] {$=$}(m-3-4)

  	(m-3-1) edge (m-3-2)
    (m-3-2) edge (m-3-3) 
    (m-3-3) edge node[above] {$\pi'$} (m-3-4) 
    (m-3-4) edge (m-3-5) 
    
    (m-4-1) edge (m-4-2)
    (m-4-2) edge (m-4-3) 
    (m-4-3) edge node[above] {$\pi$} (m-4-4) 
    (m-4-4) edge (m-4-5) 
    
    (m-5-2) edge (m-4-2) 
    (m-5-3) edge (m-4-3) 
    ;  	
\end{tikzpicture}\]
\end{center}
\end{figure}
\noindent where $G'$ is defined for the lower left square to be commutative. It induces by derivation a commutative diagram of Lie algebras: 
\begin{figure}[H]
\begin{center}
\[\begin{tikzpicture} 

 \matrix (m) [matrix of math nodes,row sep=2em,column sep=4.8em,minimum width=2em,  text height = 1.5ex, text depth = 0.25ex]
  {	  &		& 0 \\
  	  & k^r & k^r\\
  	0 & k^r & \mathfrak{g}' & \mathfrak{g} & 0.\\
    0 & \mathfrak{s} & \widetilde{\mathfrak{g}} & \mathfrak{g}\\
      &	0	& 0 \\
  };
  
  \path[-stealth]
  
  	(m-2-3) edge (m-1-3)
  	(m-5-3) edge (m-4-3)
  	(m-5-2) edge (m-4-2)
  	
  	(m-2-2) edge (m-2-3)
    
    (m-3-2) edge (m-2-2) 
    (m-3-3) edge  node[left] {$\Lie(q)$}(m-2-3) 
    
	(m-4-2) edge (m-3-2) 
    (m-4-3) edge node[left] {$\Lie(i)$} (m-3-3)     
    (m-4-4) edge node[left] {$=$}(m-3-4)
 	   
  	(m-3-1) edge (m-3-2)
    (m-3-2) edge (m-3-3) 
    (m-3-3) edge node[above] {$\Lie(\pi')$} (m-3-4) 
    (m-3-4) edge (m-3-5) 
    
    (m-4-1) edge (m-4-2)
    (m-4-2) edge (m-4-3) 
    (m-4-3) edge node[above] {$\Lie(\pi)$} (m-4-4) 
    ;  	
\end{tikzpicture}\]
\end{center}
\end{figure}
\noindent Note that the right-exactness of the second line comes from the smoothness of $\Ker(\pi')$ (see \cite[II, \S5, n\degree5, Proposition 5.3]{DG}). 

We show that $\Lie(\pi)(\widetilde{\mathfrak{g}})$ is an ideal of $\mathfrak{g}$: let $y \in \Lie(\pi)(\widetilde{\mathfrak{g}}) \subseteq \mathfrak{g}$ and and pick $g \in \mathfrak{g}$. Let also $x \in \widetilde{\mathfrak{g}}$ be such that $\Lie(\pi)(x) = y$. As $\Lie(\pi')$ is surjective there exists $g' \in\mathfrak{g}'$ such that $\Lie(\pi)(g') = g$. This provides the equality: \[[y,g] = [\Lie(\pi)(x), \Lie(\pi')(g')] = [\Lie(\pi')\circ \Lie(i)(x), \Lie(\pi')(g')] = \Lie(\pi')([\Lie(i)(x),g']),\] 
\noindent The Lie algebra $\widetilde{\mathfrak{g}}$ is isomorphic to the kernel of $\Lie(q) : \mathfrak{g}'\rightarrow k^r$ which is an ideal of $\mathfrak{g'}$. The commutativity of the diagram thus allows us to conclude that $[y,g] \in \Lie(\pi)(\widetilde{\mathfrak{g}})$. Therefore $\Lie(\pi)(\widetilde{\mathfrak{g}})$ is an ideal of $\mathfrak{g}$.

It remains to prove that the inclusion $\Lie(\pi)(\widetilde{\mathfrak{t}}) \subseteq \Lie(\pi)(\widetilde{\mathfrak{g}}) \cap \mathfrak{t}$ is actually an equality. This being established, one will only need to apply \cite[Corollaire 2.17]{BorT2} to end the proof (as this corollary states that $\mathfrak{t} \twoheadrightarrow \mathfrak{g}/\Lie(\pi)(\widetilde{\mathfrak{g}})$ is surjective). Let us thus show the equality $\Lie(\pi)(\widetilde{\mathfrak{t}}) = \Lie(\pi)(\widetilde{\mathfrak{g}}) \cap \mathfrak{t}$. It comes from the study of the right lower square of the above commutative diagram of groups: the morphism $\pi'$ being surjective with toric kernel $E$, the group $T$ is the image of a torus $T' \subseteq G'$ (by \cite[IX, Proposition 8.2 (ii)]{SGA32}). Hence the equalities $T = T'/E = \widetilde{T}/S$ hold true. The following square 

\begin{figure}[H]
\begin{center}
\[\begin{tikzpicture} 

 \matrix (m) [matrix of math nodes,row sep=2em,column sep=4.8em,minimum width=2em,  text height = 1.5ex, text depth = 0.25ex]
  {	 
  	G' & G\\
    \widetilde{G} & G,\\
  };
  \path[-stealth]

  	(m-2-1) edge  node[left]{$i$} (m-1-1)
    (m-2-2) edge (m-1-2) 
    (m-1-1) edge node[above] {$\pi'$} (m-1-2) 
    (m-2-1) edge node[above] {$\pi$} (m-2-2) 
    ;  	
\end{tikzpicture}\]
\end{center}
\end{figure}
\noindent is commutative. The image $i(\widetilde{T})$ is thus contained in $T'$. Hence the exact sequence:
\begin{figure}[H]
\begin{center}
\[\begin{tikzpicture} 

 \matrix (m) [matrix of math nodes,row sep=2em,column sep=4.8em,minimum width=2em,  text height = 1.5ex, text depth = 0.25ex]
  {
    1 & \widetilde{G} & G' & \mathbb{G}_m^r & 1\\
  };
  \path[-stealth]
  	(m-1-1) edge (m-1-2)
    (m-1-2) edge node[above] {$i$}(m-1-3) 
    (m-1-3) edge (m-1-4) 
    (m-1-4) edge (m-1-5)
    ;  	
\end{tikzpicture}\]
\end{center}
\end{figure}
\noindent induces an exact sequence of tori:
\begin{figure}[H]
\begin{center}
\[\begin{tikzpicture} 

 \matrix (m) [matrix of math nodes,row sep=2em,column sep=4.8em,minimum width=2em,  text height = 1.5ex, text depth = 0.25ex]
  {
    1 & \widetilde{T} & T' & T'' & 1.\\
  };
  \path[-stealth]
  	(m-1-1) edge (m-1-2)
    (m-1-2) edge node[above] {$i$}(m-1-3) 
    (m-1-3) edge (m-1-4) 
    (m-1-4) edge (m-1-5)
    ;  	
\end{tikzpicture}\]
\end{center}
\end{figure}
\noindent Note that the subgroup $T''$ is indeed a torus as it is:
\begin{itemize}
\item diagonalisable according to \cite[IX, Proposition 8.1]{SGA32},
\item smooth by \cite[II, \S5, n\degree 5, Proposition 5.3 (ii)]{BDP}).
\end{itemize} 
\noindent The exactness is here preserved by derivation as $\widetilde{T}$ is smooth. 

Let us now consider the right lower square of the above commutative diagram of Lie algebras:

\begin{figure}[H]
\begin{center}
\[\begin{tikzpicture} 

 \matrix (m) [matrix of math nodes,row sep=2em,column sep=4.8em,minimum width=2em,  text height = 1.5ex, text depth = 0.25ex]
  {&&0 &  & \\
 && k^r &  & \\
  0&  k^r & \mathfrak{g}' & \mathfrak{g} & 0.\\
  &&  \widetilde{\mathfrak{g}} & \mathfrak{g} & \\
  && 0\\};
  \path[-stealth]
  	(m-2-3) edge (m-1-3)
  	
    (m-3-3) edge (m-2-3) 
    (m-4-3) edge node[left] {$\Lie(i)$}(m-3-3) 
(m-4-4) edge node[left] {$=$} (m-3-4)     
    
    (m-3-1) edge (m-3-2)
    (m-3-2) edge (m-3-3) 
    (m-3-3) edge node[above] {$\Lie(\pi')$} (m-3-4) 
    (m-3-4) edge (m-3-5)
    
    (m-4-3) edge node[above] {$\Lie(\pi)$} (m-4-4)  
    
    (m-5-3) edge (m-4-3)    
    ;  	
\end{tikzpicture}\]
\end{center}
\end{figure}
\noindent The kernel $E$ being smooth, the derived morphism $\Lie(\pi')$ is still surjective. Hence one still has $\mathfrak{t} = \mathfrak{t'}/k^r$. According to what precedes any $y \in \Lie(\pi)(\widetilde{\mathfrak{g}}) \cap \mathfrak{t}$ is the image of a certain $x \in \widetilde{\mathfrak{g}}$ such that $\Lie(i)(x) \in \mathfrak{t}'$. This, combined with the exactness of the following derived exact sequence: 
\begin{figure}[H]
\begin{center}
\[\begin{tikzpicture} 

 \matrix (m) [matrix of math nodes,row sep=2em,column sep=4.8em,minimum width=2em,  text height = 1.5ex, text depth = 0.25ex]
  {
    0 & \widetilde{\mathfrak{t}} & \mathfrak{t}' & \mathfrak{t}'' & 0,\\
  };
  \path[-stealth]
  	(m-1-1) edge (m-1-2)
    (m-1-2) edge node[above] {$\Lie(i)$}(m-1-3) 
    (m-1-3) edge (m-1-4) 
    (m-1-4) edge (m-1-5)
    ;  	
\end{tikzpicture}\]
\end{center}
\end{figure} 
\noindent allows us to conclude. The exactness indeed ensures that $x \in \widetilde{\mathfrak{t}}$. Moreover, since one has that \linebreak $y = \Lie(\pi)(x)= \Lie(\pi')(i(x)) \in \Lie(\pi)(\widetilde{\mathfrak{t}})$, the expected inclusion, thus the equality, are obtained.
\end{proof}

\begin{proof}[Proof of Lemma \ref{radical_reductif_centre}]
The center $\mathfrak{z_g}$ is a nilpotent ideal of $\mathfrak{g}$, it is therefore solvable. The inclusions $\mathfrak{z_g}\subseteq \nil(\mathfrak{g}) \subseteq \rad(\mathfrak{g})$ follow. One thus only needs to show that $\rad(\mathfrak{g}) \subseteq \mathfrak{z_g}$. The involved objects being all compatible with base change we can without loss of generality assume $k$ to be algebraically closed.

A dévissage argument allows us to reduce ourselves to prove the statement for $G$ connected and semisimple: the reductive case can be deduced from the semisimple one, while the latter is ruled by the semisimple and simply connected case.
	\begin{enumerate}
		\item Assume the $k$-group $G$ to be semisimple and simply connected. It thus decomposes into a product of almost simple groups (see \cite[3.1.1, p.~55]{Tit}) and one can assume without loss of generality that $G$ is almost simple. There are two options:
		\begin{enumerate}
		 \item either $G$ is not of type $G_2$ when $p= 3$, then according to \cite[Haupsatz]{Hi}, the quotient $\mathfrak{g/z_g}$ is a simple $G$-module. Hence the radical $\rad(\mathfrak{g/z_g})$ is trivial;
		 \item or $G$ is a $k$-group of type $G_2$ and $k$ is of characteristic $3$. According to \cite[table 1]{Ho} there are then only two possibilities for $\rad(\mathfrak{g})$: it is either trivial or the Lie algebra of a $\PGL_3$ factor. This last option cannot occur because the Lie algebra $\mathfrak{pgl}_3$ is not solvable, so one can conclude that $\rad(\mathfrak{g})=0$.
\end{enumerate}		
		
		\item Assume now that $G$ is semisimple. It then admits a universal covering, denoted by $G^{\Sc}$ (see for example \cite[1.1.2, Theorem 1, p.~43]{Tit}), and one can consider the following associated central extension:
\begin{figure}[H]
\begin{center}
\[\begin{tikzpicture} 

 \matrix (m) [matrix of math nodes,row sep=2em,column sep=4.8em,minimum width=2em,  text height = 1.5ex, text depth = 0.25ex]
  {	  
  	1 & \mu & G^{\Sc} & G & 1.\\
  };
  \path[-stealth]

  	(m-1-1) edge (m-1-2)
    (m-1-2) edge (m-1-3) 
    (m-1-3) edge node[above] {$\pi$} (m-1-4) 
    (m-1-4) edge (m-1-5) 
  
    ;  	
\end{tikzpicture}\]
\end{center}
\end{figure}		
\noindent Let $T^{\Sc}$ be a maximal $k$-torus of
$G^{\Sc}$ and set $T=T^{\Sc}/\mu$ (the corresponding Lie algebras will be denoted by $\mathfrak{t}^{\Sc}$, respectively $\mathfrak{t}$). The above lemma ensures that $\Lie(\pi)(\Lie(G^{\Sc}))$ is an ideal of $\mathfrak{g}$ and one has the following exact sequence of restricted $p$-Lie algebras:
\begin{figure}[H]
\begin{center}
\[\begin{tikzpicture} 

 \matrix (m) [matrix of math nodes,row sep=2em,column sep=3.8em,minimum width=2em,  text height = 1.5ex, text depth = 0.25ex]
  {	  
  	0 &  \Lie(\mu) & \Lie(G^{\Sc}) & \mathfrak{g}& \mathfrak{g}/\Lie(\pi)(\Lie(G^{sc})) & 0.\\
  	& & & & \mathfrak{t}/\Lie(\pi)(\mathfrak{t^{\Sc}})& \\
  };
  \path[-stealth]

  	(m-1-1) edge (m-1-2)
    (m-1-2) edge (m-1-3) 
    (m-1-3) edge node[above] {$\Lie(\pi)$} (m-1-4) 
    (m-1-4) edge (m-1-5) 
    (m-1-5) edge (m-1-6);
    
    (m-2-5) \node[xshift=10.3em,yshift = -0.25em]{\rotatebox{90}{$\cong$}}(m-1-5)

    ;  	
\end{tikzpicture}\]
\end{center}
\end{figure}
\noindent The extension being central, the preimage of $\rad(\mathfrak{g})$ is a solvable ideal of $\Lie(G^{\Sc})$ (this is a consequence of \cite[1.5, Theorem 5.1 (2)]{FS}). Hence it is contained in $\rad(\Lie(G^{\Sc})) = \mathfrak{z}_{\Lie(G^{\Sc})}$. Composing with $\Lie(\pi)$, one can then deduce that the inclusion 
\[\rad(\mathfrak{g}) \cap  \Lie(\pi)(\Lie(G^{\Sc} )) \subseteq  \mathfrak{z_g}\]
is satisfied, whence the desired equality
\[\rad(\mathfrak{g}) \cap  \Lie(\pi)( \Lie(  G^{\Sc} )) =  \mathfrak{z_g}  \cap  \Lie(\pi)( \Lie(G^{\Sc})).\]

The above exact sequence thus induces the following one:
\begin{figure}[H]
\begin{center}
\[\begin{tikzpicture} 

 \matrix (m) [matrix of math nodes,row sep=2em,column sep=4.8em,minimum width=2em,  text height = 1.5ex, text depth = 0.25ex]
  {	  
  	0 &  \mathfrak{z_g}\cap \Lie(\pi)(\Lie(G^{\Sc})) & \rad(\mathfrak{g}) & \mathfrak{h} & 0.\\
  };
  \path[-stealth]

  	(m-1-1) edge (m-1-2)
    (m-1-2) edge (m-1-3) 
    (m-1-3) edge node[above] {$\Lie(\pi)$} (m-1-4) 
    (m-1-4) edge (m-1-5) 
    ;  	
\end{tikzpicture}\]
\end{center}
\end{figure}
\noindent where $\mathfrak{h}$ is a restricted $p$-subalgebra of $\mathfrak{t}/ \Lie(\pi)( \mathfrak{t}^{\Sc})$, which is toral so has no $p$-nilpotent elements. In other words, the $p$-nilpotent elements of $\rad(\mathfrak{g})$ are trivial. Hence $\rad(\mathfrak{g})$ only has semisimple elements. According to \cite[Proposition 2.13]{BorT2}, it only remains to show the equality $N_{G}(\rad(\mathfrak{g})) =G$ to get the desired inclusion $\rad(\mathfrak{g}) \subseteq \mathfrak{z_g}$. Note also that all the other assumptions of the Proposition are trivially satisfied as $\rad(\mathfrak{g})$ is a proper ideal of $\mathfrak{g}$ (because $G$ is a reductive $k$-group). 

Let us thus show the equality $N_{G}(\rad(\mathfrak{g})) =G$. According to \cite[II,\S5, n\degree 3.2, Proposition]{DG} this can be shown on $\bar{k}$-points (as the group $G$ is smooth and of finite presentation and the Lie algebra $\rad(\mathfrak{g})$ is reduced and closed in $\mathfrak{g}$). This is clear as $\rad(\mathfrak{g})(\bar{k})$ is stable under conjugation: the image of $\rad(\mathfrak{g})(\bar{k})$ by $G(\bar{k})$-conjugation is a solvable ideal of $\mathfrak{g}(\bar{k})$, its maximality can be deduced by applying the inverse morphism.

		\item If $G$ is any reductive $k$-group, the following exact sequence allows to reduce ourselves to the preceding cases (see for example \cite[XXII Définition 4.3.6]{SGA33}):
		\begin{figure}[H]
\begin{center}
\[\begin{tikzpicture} 

 \matrix (m) [matrix of math nodes,row sep=2em,column sep=3.8em,minimum width=2em,  text height = 1.5ex, text depth = 0.25ex]
  {	  
  	1 &  (Z_G^0)_{\red}=\Rad(G) & G & G/(Z_G^0)_{\red} := G^{\Ss}& 1.\\
  };
  \path[-stealth]

  	(m-1-1) edge (m-1-2)
    (m-1-2) edge (m-1-3) 
    (m-1-3) edge node[above] {$\pi$} (m-1-4) 
    (m-1-4) edge (m-1-5) 
    ;  	
\end{tikzpicture}\]
\end{center}
\end{figure}
\noindent Indeed, as the subgroup $\Rad(G)$ is smooth, this exact sequence induces after derivation an exact sequence of Lie algebras (see \cite[\S5, n\degree 5, Proposition 5.3]{DG})
\begin{figure}[H]
\begin{center}
\[\begin{tikzpicture} 

 \matrix (m) [matrix of math nodes,row sep=2em,column sep=4.8em,minimum width=2em,  text height = 1.5ex, text depth = 0.25ex]
  {	  
  	0 &  \Lie(\Rad(G)) & \mathfrak{g} & \Lie(G^{\Ss}) & 0.\\
  };
  \path[-stealth]

  	(m-1-1) edge (m-1-2)
    (m-1-2) edge (m-1-3) 
    (m-1-3) edge node[above] {$\Lie(\pi)$} (m-1-4) 
    (m-1-4) edge (m-1-5) 
    ;  	
\end{tikzpicture}\]
\end{center}
\end{figure}
\noindent The morphism $\Lie(\pi)$ is surjective, its image $\Lie(\pi)(\rad(\mathfrak{g}))$ is therefore a solvable ideal of $\Lie(G^{\Ss})$. By what precedes it is then contained in the center of $\Lie(G^{\Ss})$. Let $x \in \rad(\mathfrak{g})$. As $k$ may be assumed to be algebraically closed, the element $x$ admits a Jordan decomposition, say $x = x_{s} + x_n$, with $x_{s}$ semisimple and $x_n$ a $p$-nilpotent element of $\rad(\mathfrak{g})$ (for the existence of such see for example \cite[2.3 Theorem 3.5]{FS}). As $\pi(x) \in \mathfrak{z_g}$ one necessarily has $\pi(x_n) = 0$, meaning that $x_n \in \Lie((Z^0_G)_{\red})$ which is toral. Hence $x_n = 0$. So $\rad(\mathfrak{g})$ only has semisimple elements. According to \cite[Proposition 2.13]{BorT2} we just have shown that $\rad(\mathfrak{g}) \subseteq \mathfrak{z_g}$ because $\rad(\mathfrak{g})$ is a proper $G$-sub-module of $\mathfrak{g}$. 
	\end{enumerate}
\end{proof}

\begin{remarks}
It is worth mentioning the following points:
\begin{enumerate}
\item Lemma \ref{radical_reductif_centre} in particular allows to measure the potential lack of smoothness of the center of $G$. More precisely one has:
\[\Lie(Z_G)/\Lie((Z_G)_{\red}) \cong \mathfrak{z_g}/\Lie{(Z_G)_{\red}}\cong \rad(\mathfrak{g})/\Lie(\Rad(G)), \]
\noindent where the first isomorphism comes from Remark \ref{centre_noyau_ad} ii). According to the proof of Lemma \ref{radical_reductif_centre} this quotient is a restricted toral $p$-algebra.

\item A careful study of the proof shows that the only difficulty one would have when trying to extend the above result to the characteristic $2$ framework relies on the fact that the $G$-module $\mathfrak{g/z_g}$ might not be simple. According to \cite[Haupsatz]{Hi} for an algebraically closed field $k$ of characteristic $2$ this is not an issue if the root system of $G$ only has irreducible components of $A_n$-type. This is always satisfied in this article.
\end{enumerate}
\label{remark_autorise_car2}
\end{remarks}

As mentioned in Remark \ref{rem_car_2} the following result allows to slightly refine the hypotheses on $p$ in the study of the nilradical of the Lie algebra of a reductive group.

\begin{lemma}[(Corollary of {\cite[Lemma 2.1]{VAS}})]
Let $G$ be a reductive $k$-group. If $k$ is of characteristic $2$ assume that $G^{\Ad}_{k^s}$ has no direct factor $G_1$ isomorphic to $\SO_{2n+1}$ for an integer $n>0$.
Under these assumptions $\nil(\mathfrak{g})$ is the center of $\mathfrak{g}$.
\label{nil_red}
\end{lemma}

\begin{proof}
One inclusion is clear and does not require any additional assumption on the characteristic of $k$: the center of $\mathfrak{g}$ is a nilpotent ideal of $\mathfrak{g}$ so it is contained in the nilradical of $\mathfrak{g}$. 

To show the reverse inclusion one only needs to prove that $\nil(\mathfrak{g})/\mathfrak{z_g} = 0$. The inclusion $\mathfrak{g/z_g} \subseteq \Lie(G^{\Ad})$ is provided by the exact sequence of Lie algebras of Remark \ref{centre_noyau_ad} ii):
\begin{figure}[H]
\begin{center}
\[\begin{tikzpicture} 

 \matrix (m) [matrix of math nodes,row sep=2em,column sep=4.8em,minimum width=2em,  text height = 1.5ex, text depth = 0.25ex]
  {
    0 & \mathfrak{z_g} = \Lie(Z_G) & \mathfrak{g} & \End(\mathfrak{g})\\
  };
  \path[-stealth]
  	(m-1-1) edge (m-1-2)
    (m-1-2) edge (m-1-3) 
    (m-1-3) edge node[above] {$\ad$} (m-1-4) 
    ;  	
\end{tikzpicture}\]
\end{center}
\end{figure}
\noindent Assume that $\nil(\mathfrak{g})/\mathfrak{z_g} \neq 0$. We show that this implies \cite[Lemma 2.1]{VAS} to hold true, leading to a contradiction (as it would imply $p = 2$ and $G$ to be such as excluded in the assumptions). 

We therefore have to check that:
\begin{enumerate}
	\item the quotient $\nil(\mathfrak{g})/\mathfrak{z_g}$ is a $G^{\Ad}$-sub-module of $\Lie(G^{\Ad})$, 
	\item for any maximal torus $T^{\Ad}\subseteq G^{\Ad}$ the intersection $\mathfrak{\nil(g)/z_g} \cap \Lie(T^{\Ad})$ is trivial.
\end{enumerate} 

To check that condition (i) is satisfied we first show that $\nil(\mathfrak{g/z_g}) = \nil(\mathfrak{g})/\mathfrak{z_g}$. The preimage of $\nil(\mathfrak{g/z_g})$ is a nilpotent ideal of $\mathfrak{g}$, as the considered extension of Lie algebras: 
\begin{figure}[H]
\begin{center}
\[\begin{tikzpicture} 

 \matrix (m) [matrix of math nodes,row sep=2em,column sep=4.8em,minimum width=2em,  text height = 1.5ex, text depth = 0.25ex]
  {
    0 & \mathfrak{z_g} = \Lie(Z_G) & \mathfrak{g} & \mathfrak{g/z_g} &0\\
  };
  \path[-stealth]
  	(m-1-1) edge (m-1-2)
    (m-1-2) edge (m-1-3) 
    (m-1-3) edge (m-1-4) 
    (m-1-4) edge (m-1-5) 
    ;  	
\end{tikzpicture}\]
\end{center}
\end{figure}
\noindent is central. It is thus contained in $\nil(\mathfrak{g}/\mathfrak{z_g})$ because the quotient $\nil(\mathfrak{g})/\mathfrak{z_g}$ is a nilpotent ideal of $\mathfrak{g/z_g}$. Hence we have shown the desired equality. So we are reduced to show that $\nil(\mathfrak{g}/\mathfrak{z_g})$ is a $G^{\Ad}$-sub-module of $\Lie(G^{\Ad})$, or in other words that $N_{G^{\Ad}}(\nil(\mathfrak{g/z_g})) =G^{\Ad}$. Once again as:
\begin{itemize}
	\item the group $G^{\Ad}$ is smooth of finite presentation, 
	\item and $\mathfrak{g/z_g}$ is reduced and closed in $\Lie(G^{\Ad})$, 
\end{itemize}
\noindent one only needs to check this equality on $\bar{k}$-points (see \cite[II,\S5, n\degree 3.2, Proposition]{DG}). Remark that the quotient $\nil(\mathfrak{g/z_g}(\bar{k}))$ is stable for the adjoint action as the image of $\nil(\mathfrak{g/z_g})(\bar{k})$ under the $G^{\Ad}(\bar{k})$-conjugation is a nilpotent ideal of $\mathfrak{g/z_g}(\bar{k})$. Its maximality follows by considering the reverse morphism. Thus we have shown the equality.

To check that condition (ii) is indeed satisfied, first notice that any maximal torus $T^{\Ad} \subset G^{\Ad}$ comes from a maximal torus $T\subset G$. At the Lie algebras level one can summarize the situation with the following commutative diagram:
\begin{figure}[H]
\begin{center}
\begin{tikzpicture}
    \matrix(m)[matrix of math nodes, row sep=2em, column sep=2em, text height=1.5ex, text depth=0.25ex]
    { & \nil(\mathfrak{g})/\mathfrak{z_g} &  \\
    0 & \mathfrak{g/z_g} & \Lie(G^{\Ad}), \\
    0 & \Lie(T)/\mathfrak{z_g} & \Lie(T^{\Ad}). \\
  };
 (m-1-2) \node[xshift=-1.8em,yshift = 1.8em]{\rotatebox{-90}{$\subseteq$}}(m-2-2) ;
 (m-3-2) \node[xshift=-1.8em,yshift = -1.8em]{\rotatebox{90}{$\subseteq$}}(m-2-2)
 ;
  (m-3-3) \node[xshift=4.4em,yshift = -1.8em]{\rotatebox{90}{$\subseteq$}}(m-2-3)
 ; 
    \path[->,font=\scriptsize]

    (m-2-1) edge (m-2-2)
    (m-2-2) edge (m-2-3)
    (m-3-1) edge (m-3-2)
    (m-3-2) edge (m-3-3)

    ;
\end{tikzpicture}
\end{center}
\end{figure} 
\noindent Assume that the intersection $\mathfrak{\nil(g)/z_g} \cap \Lie(T^{\Ad})$ is not trivial. This in particular implies that neither is the intersection $\mathfrak{\nil(g)/z_g} \cap \Lie(T)/\mathfrak{z_g}$ as any element of the first intersection occurs as an element of the image of $\mathfrak{g} \rightarrow \mathfrak{g/z_g}$. Remember that we have already shown that $\mathfrak{z_g}$ is contained in $\nil(\mathfrak{g})$. According to Remark \ref{centre_noyau_ad} ii), it is nothing but the Lie algebra of $Z_G$, whence the inclusion $\mathfrak{z_g} \subseteq \Lie(T)$. 

The non-triviality of the intersection $\mathfrak{\nil(g)/z_g} \cap \Lie(T)/\mathfrak{z_g}$ is therefore equivalent to suppose that the inclusion $\mathfrak{z_g} \subsetneq \Lie(T)\cap \nil(\mathfrak{g})$ is strict. This leads to a contradiction. Indeed any element of the nilradical is $\ad$-nilpotent (according to the second point of the preamble of this section) and any $\ad$-nilpotent element of the Lie algebra of a torus is central. This can be shown as follows: let $n$ be the order of $\ad$-nilpotency of $x \in \Lie(T)$ and $y \in \mathfrak{g}$. Passing to the algebraic closure of $k$ if necessary, the Lie algebra $\mathfrak{g}$ has a weight space decomposition for the action of the maximal torus $T$ (which is locally splittable). Let $R$ be an associated root system. One has:  $\mathfrak{g} = \mathfrak{t} \oplus \bigoplus_{\alpha \in R} \mathfrak{g}^{\alpha}$. 
Thus $y$ writes $y = y_0 + \sum_{\alpha \in R} y^{\alpha}$ for $y^0 \in \mathfrak{t}$ and $y^{\alpha} \in \mathfrak{g}^{\alpha}$, with $\alpha \in R$. This leads to:
 \begin{alignat*}{3}
0 = \ad^n(x)(y) \: & = \ad^n(x)(y^0 + \sum_{\alpha \in R} y^{\alpha}) \: & = \sum_{\alpha \in R} \alpha^n(x)y^{\alpha},\\
 \end{alignat*}
where we have made use of the vanishing condition $\ad(x)(y^0)=0$ as $x \in \mathfrak{t}$. This equality being satisfied if and only if $\ad(x)(y^{\alpha}) = 0$ for any $\alpha \in R$, this implies that $x \in \mathfrak{z_g}$.

\end{proof}

\begin{remark}
In this article we always require that $p$ is not of torsion for $G$. This in particular implies that $p$ is strictly greater than $2$ if $G$ has any factor of $B_n$ type. The above lemma then tells us that the equality $\nil(\mathfrak{g}) = \mathfrak{z_g}$ is always satisfied here.
\end{remark}

\begin{lemma}
Let $U$ be a unipotent algebraic $k$-group then its Lie algebra $\mathfrak{u}$ is $p$-nil. In particular, the Lie algebra of the unipotent radical of a smooth connected $k$-group $G$ is a restricted $p$-nil $p$-ideal of $\mathfrak{g}$.
\label{rad_unip_p_nil}
\end{lemma}

\begin{proof}
As $k$ is a field, it follows from \cite[IV, \S2, n\degree 2 Proposition 2.5 vi)]{DG} that the unipotent $k$-group $U$ is embeddable into the subgroup $U_{n,k}$ of upper triangular matrices of $\GL_n$ for a certain $n \in \mathbb{N}$. This leads to the following inclusion of restricted $p$-Lie algebras (all of them coming from algebraic $k$-groups) $\mathfrak{u} \subseteq \mathfrak{u}_{n,k}$. Note that the $p$-structure on $\mathfrak{u}_{n,k}$ is given by taking the $p$-power of matrices. This makes $\mathfrak{u}_{n,k}$ into a restricted $p$-nil $p$-subalgebra, so is $\mathfrak{u}$. 

If now $U$ is the unipotent radical of a smooth connected $k$-group $G$ its Lie algebra is an ideal of $\mathfrak{g}$ because it is the Lie algebra of a normal subgroup of $G$. As it derives from an algebraic $k$-subgroup of $G$ it is endowed with a $p$-structure compatible with the $p$-structure of $G$. Hence it is a restricted $p$-ideal of $\mathfrak{g}$. It is $p$-nil by what precedes. 
\end{proof}

\begin{lemma}
Let $k$ be a perfect field and $H$ be a smooth connected algebraic $k$-group. Then:
\begin{enumerate}
\item if the reductive $k$-group $\underline{H} := H/\Rad_U(H)$ satisfies the conditions of Lemma \ref{nil_red} the Lie algebra of the unipotent radical of $H$ is the $p$-radical of $\mathfrak{h}$. In other words the equality $\mathfrak{\radu}(H) = \rad_p(\mathfrak{h})$ holds true, 
\item  if $k$ is of characteristic $p\geq 3$ the $p$-radical of $\mathfrak{h}$ is the set of $p$-nilpotent elements of $\rad(\mathfrak{h})$.
\end{enumerate}
\label{p_rad_gpe_lisse_connexe}
\end{lemma}

\begin{remark}
In particular let $k$ be a perfect field. Consider a reductive $k$-group $G$ and a parabolic subgroup $P \subseteq G$. If $P$ is such that the reductive quotient $P/\Rad_U(P)$ it defines satisfies the assumptions of Lemma \ref{nil_red} then: 
\begin{itemize}
\item the Lie algebra of its unipotent radical is the $p$-radical of $\mathfrak{p} := \Lie(P)$,
\item it is the set of all $p$-nilpotent elements of $\rad(\mathfrak{p})$.
\end{itemize}
\noindent As a reminder (see \cite[XXVI, Proposition 1.21 ii)]{SGA33}), if $L \subseteq P$ is a Levi subgroup, the solvable radical $\Rad(P)$ is the semi-direct product of the unipotent radical of $P$ with the radical of $L$. This in particular implies that $\Lie(Z_L^0)\subseteq \rad(\mathfrak{p})$ for $Z_L^0$ being the center of $L$.
\label{p-rad_parab}
\end{remark}

\begin{proof}
We show each point separately.
\begin{enumerate}
	\item We start by showing (i). An implication is clear: according to Lemma \ref{rad_unip_p_nil} the Lie algebra $\mathfrak{\radu}(H)$ is a restricted $p$-nil $p$-ideal. In particular the inclusion
$\mathfrak{\radu}(H)\subseteq \rad_p(\mathfrak{h})$ holds true. 

Let us show the reverse inclusion. The radical of $H$ being a smooth subgroup the following exact sequence of algebraic $k$-groups:
\begin{figure}[H]
\begin{center}
\[\begin{tikzpicture} 

 \matrix (m) [matrix of math nodes,row sep=2em,column sep=4.8em,minimum width=2em,  text height = 1.5ex, text depth = 0.25ex]
  {
    1 & \Rad_u(H) & H & H/\Rad_U(H)=: \underline{H} & 1,\\
  };
  \path[-stealth]
  	(m-1-1) edge (m-1-2)
    (m-1-2) edge (m-1-3) 
    (m-1-3) edge node[above] {$\pi$} (m-1-4) 
    (m-1-4) edge (m-1-5)
    
    ;
    	
\end{tikzpicture}\]
\end{center}
\end{figure}
\noindent induces an exact sequence of $k$-Lie algebras (see \cite[II, \S5, n\degree 5 Proposition 5.3]{DG}):
\begin{figure}[H]
\begin{center}
\[\begin{tikzpicture} 

 \matrix (m) [matrix of math nodes,row sep=2em,column sep=4.8em,minimum width=2em,  text height = 1.5ex, text depth = 0.25ex]
  {
    0 & \mathfrak{\radu}(H) & \mathfrak{h} & \mathfrak{h/\radu}(H)=:\underline{\mathfrak{h}} & 0. \\};
  \path[-stealth]
  	(m-1-1) edge (m-1-2)
    (m-1-2) edge (m-1-3) 
    (m-1-3) edge node[above] {$\Lie(\pi)$} (m-1-4) 
    (m-1-4) edge (m-1-5)
     ;
    	
\end{tikzpicture}\]
\end{center}
\end{figure}

\noindent This is an exact sequence of restricted $p$-Lie algebras (see \cite[II, \S7 n\degree 2.1 and n\degree 3.4]{DG} for the compatibility with the $p$-structure). The derived morphism $\Lie(\pi)$ being surjective the image of $\nil(\mathfrak{h})$ under $\Lie(\pi)$ is still an ideal. It is nilpotent as $\Lie(\pi)$ is a morphism of restricted $p$-Lie algebras, whence the inclusion $\Lie(\pi)(\nil(\mathfrak{h})) \subseteq \nil(\underline{\mathfrak{h}})$. As $\underline{\mathfrak{h}}$ derives from a reductive $k$-group which does not fit into the pathological case raised by A. Vasiu in \cite[Lemma 2.1]{VAS}, Lemma \ref{nil_red} applies. This leads to the equality $ \nil(\underline{\mathfrak{h}}) = \mathfrak{z}_{\underline{\mathfrak{h}}}$, thus one has $\Lie(\pi)(\nil(\mathfrak{h})) = \mathfrak{z}_{\underline{\mathfrak{h}}}$.

According to Lemma \ref{inclusion_p-struct} (i) one has $\rad_p(\mathfrak{h}) \subseteq \nil(\mathfrak{h})$, hence any $x \in \rad_p(\mathfrak{h})$ is mapped to the center of $\mathfrak{z}_{\underline{\mathfrak{h}}}$. The restricted $p$-ideal $\rad_p(\mathfrak{h})$ being $p$-nil, the element $x$ is $p$-nilpotent, so is $\Lie(\pi)(x)$ (as $\Lie(\pi)$ is compatible with the $p$-structures on $\mathfrak{h}$ and $\underline{\mathfrak{h}}$). In other words $\Lie(\pi)(x)$ is a $p$-nilpotent element of $\mathfrak{z}_{\underline{\mathfrak{h}}} $, which is, according to Remark \ref{centre_noyau_ad} ii), the Lie algebra of the center of the reductive $k$-group $\underline{H}$. This center is thus a toral restricted $p$-subalgebra (see for example \cite[XXII, Corollaire 4.1.7]{SGA33}) hence the equality $\Lie(\pi)(x)= 0$. In other words we just have shown that $x \in \radu(H)$, whence the equality $\rad_p(\mathfrak{h}) = \radu(H)$. This concludes the proof of (i). 

\item Let us then prove the second point of the statement. Once again an inclusion is clear: the $p$-radical $\rad_p(\mathfrak{h})$ is a restricted $p$-nil $p$-ideal of $\mathfrak{h}$ and is therefore contained in the set of all $p$-nilpotent elements of $\mathfrak{h}$.
Let us show the converse inclusion: let $x \in \rad(\mathfrak{h})$. The morphism $\Lie(\pi)$ being surjective, the image $\Lie(\pi(x))$ belongs to $\rad(\underline{\mathfrak{h}})$, which is the center of $\underline{\mathfrak{h}}$ according to Lemma \ref{radical_reductif_centre} (which holds true as $p\geq 3$). It necessarily vanishes because the center of $\underline{\mathfrak{h}}$ is toral and $\Lie(\pi)(x)$ is also a $p$-nilpotent element. In other words, we have shown that $x\in \radu(\mathfrak{h})$ which is the $p$-radical of $\mathfrak{h}$ according to the first point of the lemma. Hence any $p$-nilpotent of $\rad(\mathfrak{h})$ belongs to $\rad_p(\mathfrak{h})$, whence the desired equality.
\end{enumerate}
\end{proof}

\section{Springer isomorphisms and $\fppf$-formalism}
\label{candidat}
	\subsection{Integrating $p$-nilpotent elements: a starting point}
	
In what follows, the field $k$ is algebraically closed of characteristic $p>0$ and $G$ is a reductive $k$-group. The notations used here are those of \cite[II, Définition 4.6.1]{SGA31}: let $S$ be an affine scheme with ring of coordinates $\mathcal{O}_S$. For any $\mathcal{O}_S$-module $F$, denote by $W(F)$ the following contravariant functor over the category of $S$-schemes:
\[W(F)(S'):= \Gamma(S', F\otimes_{\mathcal{O}_S} \mathcal{O}_{S'}),\]
\noindent where $\Gamma$ identifies with the set of $F\otimes_{\mathcal{O}_S} \mathcal{O}_{S'}$-sections over $S'$. Moreover let $H$ be a $k$-algebraic group, and denote by $\mathfrak{h}$ its Lie algebra. By \cite[II, Lemme 4.11.7]{SGA31} the equality $\mathfrak{h} = W(\mathfrak{h})$ is satisfied. In particular $\mathfrak{h}$ is smooth and connected. 

\subsubsection{Reduced part of the nilpotent and unipotent schemes}\phantom{ooo}

\label{N_red}
The reductive group $G$ acts on $\mathfrak{g}$ via the adjoint action. Let us denote by $\mathcal{O}_{\mathfrak{g}}$ the coordinate ring of $\mathfrak{g}$, and by $\mathcal{O}_{\mathfrak{g}}^G$ the fixed points under the induced action. When $k$ is a field the affine quotient $[\mathfrak{g}/G]:= \Spec(\mathcal{O}_{\mathfrak{g}}^G)$ is universal and the nilpotent scheme $\mathcal{N}(\mathfrak{g})$ is the fibre $\pi^{-1}\pi(0)$, where $\pi: \mathfrak{g}\rightarrow [\mathfrak{g}/G]$ is the quotient morphism and $0 \in \mathfrak{g}(k)$ is the zero section. This is explained in details in a more general framework in \cite[(2.4), (2.5) and (2.6)]{HESS1}. The reduced part of the nilpotent scheme, denoted by $\mathcal{N}_{\red}(\mathfrak{g})$, coincides with the reduced subscheme of $\mathfrak{g}$ whose $k$-points are the $p$-nilpotent elements of $\mathfrak{g}$ (see for instance \cite[9.2.1]{BarRic}).

Similarly $G$ acts on itself via the adjoint action. When $k$ is a field the affine quotient $[G/\Ad(G)]$ is universal and the unipotent scheme $\mathcal{V}(G)$ is the fibre $\pi^{-1}\pi(e)$, where $\pi: G\rightarrow [G/\Ad(G)]$ is the quotient morphism and $e \in G(k)$ is the neutral element. The reduced part of the unipotent scheme, denoted by $\mathcal{V}_{\red}(G)$, coincides with the reduced subscheme of $G$ whose $k$-points are the unipotent elements of $G$ (see for instance \cite[9.1]{BarRic}).

The literature on Springer isomorphisms mainly considers the so-called nilpotent and unipotent varieties (under the convention that varieties are reduced). When the terminology of schemes is adopted, as in the article of V. Balaji, P. Deligne and A. J. Parameswaran \cite{BDP}, the authors insist on the necessity of considering the reduced part of both nilpotent and unipotent schemes (as the proof of existence of Springer isomorphisms is constructive and based on a reasoning on points). One might then wonder whether under the framework of this article these schemes are reduced or not. For the nilpotent scheme this is given by \ref{nilpotent_variety_reduced} which is a corollary of S. Riche's work \cite[Lemma 3.3.3]{Riche}. The following notions will be necessary in what follows:
\begin{itemize}
\item an element $x \in \mathfrak{g}$ is regular if its centraliser $C_G(x)$ is of minimal dimension. This lower bound exists, equals the rank of the reductive group $G$ and is attained. This is detailed for instance in section 2.3 of S. Riche's paper. We denote by $\mathfrak{g}_{\reg}$ the subset of $\mathfrak{g}$ consisting of all regular nilpotent elements;
\item the intersection $\mathcal{N}(\mathfrak{g})\cap \mathfrak{g}_{\reg} \in W(\mathfrak{g})$ is denoted $\mathcal{N}(\mathfrak{g}_{\reg})$ and is the scheme whose points are regular nilpotent elements of $\mathfrak{g}$.
\end{itemize}
 
\begin{proposition}[(corollary of {\cite[Lemma 3.3.3]{Riche}})]
Let $G$ be a connected reductive group defined over a field $k$ of characteristic $p>0$ which is assumed to be pretty good for $G$. Then the nilpotent scheme $\mathcal{N}(\mathfrak{g})$ is reduced.
\label{nilpotent_variety_reduced}
\end{proposition}

\begin{remark}
Pretty good characteristics are those that satisfy S. Riche's (C3) condition in \cite[2.2, p.~227]{Riche}. Integration and related questions often come with the assumption that the reductive group $G$ is standard (see for instance \cite[2.9]{Jantzen2004}). Namely this means that: 
\begin{enumerate}
\item the derived group $G$ is simply connected,
\item the characteristic is good for $G$,
\item there exists a 
$G$-equivariant nondegenerate bilinear form on $\mathfrak{g}$. 
\end{enumerate}
As underlined in \cite[2.2]{Riche}, this is a stronger condition than the pretty good characteristic assumption. So in particular Proposition \ref{nilpotent_variety_reduced} ensures that the nilpotent scheme of a reductive group that satisfies the standard hypotheses is reduced.
\label{standard_hyp}
\end{remark}

\begin{proof}
Let us first remind the reader that the nilpotent scheme is irreducible, as already shown for instance in \cite[Lemma 6.2]{Jantzen2004}. Note that in the article the proof is made for the reduced part of the nilpotent scheme (by definition of the nilpotent variety); this is not an issue here, as being irreducible is a property of the underlying topological space. 

The subset of regular elements of $\mathfrak{g}$ is open (this is shown for instance in \cite[1.4 Corollary]{HUM2}) and non-empty (\cite[Lemma 3.3.1]{Riche}), therefore $\mathcal{N}(\mathfrak{g}_{\reg})$ is also open (and non-empty) in $\mathcal{N}(\mathfrak{g})$. The situation is that of the following diagram:
\begin{figure}[H]
\begin{center}
\[
\begin{tikzcd}[sep=large]
\mathcal{N}(\mathfrak{g}_{\reg})   \arrow[r] \arrow[d] &  W(\mathfrak{g}_{\reg}) \arrow[d, "\chi_{\reg}"],  \\
\Spec(k) \arrow[r] &  \left(W(\mathfrak{g})//G\right) \cong \mathbb{A}^n_k, 
\end{tikzcd}
\]
\end{center}
\end{figure}
\noindent which is cartesian and where the morphism $\chi_{\reg}$ is smooth according to \cite[Lemma 3.3.3]{Riche} (as we have assumed the characteristic to be pretty good for $G$). Thus the morphism $\mathcal{N}(\mathfrak{g}_{\reg}) \rightarrow \Spec(k)$ is smooth (see for instance \cite[I, \S 4, n\degree 4.1]{DG}). As any smooth scheme over a field is geometrically reduced (see \cite[\href{https://stacks.math.columbia.edu/tag/056T}{Tag 056T}]{stacks-project}), what precedes tells us that $\mathcal{N}(\mathfrak{g}_{\reg})$ is indeed reduced. As this is an open subset of $\mathcal{N}(\mathfrak{g})$ it is dense in $\mathcal{N}(\mathfrak{g})$ (the latter being irreducible) and its scheme theoretic closure is the nilpotent scheme itself. Hence $\mathcal{N}(\mathfrak{g}_{\reg})$ is a reduced open subset which is scheme theoretically dense in $\mathcal{N}(\mathfrak{g})$ so the nilpotent scheme itself is reduced (see \cite[\href{https://stacks.math.columbia.edu/tag/056E}{Tag 056E}]{stacks-project}).
\end{proof}

\begin{remark}
To conclude that the nilpotent scheme is reduced in separably good characteristics, it remains to investigate what happens for $\SL_{mp}$ in characteristic $p>0$, with $m\in \mathbb{N}^{*}$. This case is actually governed by the case of $\GL_{mp}$. Indeed, a matrix $M \in \gl_{mp}$ is nilpotent if and only if its characteristic polynomial is of the form $t^{mp}$, thus if and only if $\tr(\bigwedge^i M) = 0$ for $0\leq i \leq mp$ where $\bigwedge^i M$ is the $i^{\text{th}}$-exterior power of $M$. In particular they have trace $0$, whence the isomorphism of coordinate rings 
\[\mathcal{O}_{\mathcal{N}(\gl_{mp})} = \mathcal{O}_{\gl_{mp}}/\langle \tr(\bigwedge^i M)\rangle_{i=1, \cdots, mp} \cong \mathcal{O}_{\ssl_{mp}}/\langle \tr(\bigwedge^i M)\rangle_{i=1,\cdots mp}=\mathcal{O}_{\mathcal{N}(\ssl_{mp})}.\] 
As $\mathcal{N}(\gl_{mp})$ is reduced in characteristic $p$ (according to \ref{nilpotent_variety_reduced} as $p$ is pretty good for $\GL_{mp}$) so is its coordinate ring $\mathcal{O}_{\mathcal{N}(\gl_{mp})}$, hence the reducedness of $\mathcal{N}(\ssl_{mp})$. Therefore $\mathcal{N}(\mathfrak{g})$ is reduced when the characteristic is separably good for the reductive group $G$.
\end{remark}

\begin{remark}
In non separably good characteristics both unipotent and nilpotent schemes might be non-reduced as underlined for instance in \cite[3.9, Remark]{SLO} in which the author studies the unipotent scheme of $\PGL_2$ in characteristic $2$. From this, one can derive the same counter-example for the nilpotent scheme of $\pgl_2$:
\[\mathcal{N} (\pgl_2) = \left\lbrace \begin{pmatrix}
x_1 & 0 & 0 \\
x_2 & 0 & x_3\\
0 & 0 & x_1
\end{pmatrix}\right\rbrace \in \gl_3,\]
and a matrix $M$ of this Lie algebra (as described above) is nilpotent if and only if its characteristic polynomial is $t^3$. Hence the nilpotent scheme of $\pgl_2$ has coordinate ring $k[x_1, x_2, x_3]/\langle (x_1^2), (2x_1)\rangle$, which is not reduced in characteristic $2$.
\end{remark}

\subsubsection{Springer isomorphisms}\phantom{ooo}\\

\label{Springer_iso}
As explained in the introduction, the existence of a $G$-equivariant isomorphism $\phi:\mathcal{N}_{\red}(\mathfrak{g}) \rightarrow \mathcal{V}_{\red}(G)$ is necessary to obtain a punctual integration. When $G$ is a simply connected $k$-group and $p$ is good for $G$, T. A. Springer establishes in \cite[Theorem 3.1]{SPR} the existence of homeomorphisms between these schemes. As pointed out by the author himself, these homeomorphisms would be isomorphisms of varieties (with the convention of the article, hence of reduced schemes) if the reduced part of the nilpotent scheme were known to be normal (which had not been shown at the time of the paper). This result has been studied and refined by many mathematicians among those P. Bardsley and R. W. Richardson in \cite[9.3.2]{BarRic} who established the normality of $\mathcal{N}(\mathfrak{g})$ in pretty good characteristic (under this assumption the nilpotent scheme is reduced according to Proposition \ref{nilpotent_variety_reduced}) and extended the existence of such isomorphisms to any reductive $k$-group which satisfies the standard hypotheses (as defined by J. C. Jantzen, see \ref{standard_hyp}). Let us also mention here the work of S. Herpel who shows in \cite{HER} the existence of Springer isomorphisms for any reductive $k$-groups in pretty good characteristic. This mainly uses previous results from G. McNinch and D. Testerman (see \cite[Theorem 3.3]{MT2}). 

From now on, and unless otherwise stated, the characteristic of $k$ is separably good for $G$. In particular nilpotent and unipotent schemes are reduced, hence the subscripts are removed everywhere Springer isomorphisms are considered.

Let us insist on the following point: there exist several Springer isomorphisms but they all induce the same bijection between the $G$-orbits of $\mathcal{N}_{\red}(\mathfrak{g})$ and those of $\mathcal{V}_{\red}(G)$, as shown by J-P. Serre in \cite[10, Appendix]{MCNOPT}. To fix better the reader's idea on such variety of Springer isomorphisms one might have a look at the preamble of the aforementioned appendix. There J.-P. Serre considers the example $G = \SL_n$ and picks a nilpotent element $e \in \ssl_n$ of order $n$. He then explains that in this case a Springer isomorphism $\phi$ is of the form:
\begin{alignat*}{5}
\phi : \: & \mathcal{N}_{\red}(\mathfrak{g}) \: &  \rightarrow \: & \mathcal{V}_{\red}(G) \: & \\
	   \: & 1+e \:& \mapsto \: & a_1e + \: &\cdots +a_{n-1}e^{n-1},\\
\end{alignat*}
where the $a_i$'s are elements of $k$ such that $a_1 \neq 0$. Moreover, any $n$-tuple $(a_1, \hdots, a_{n-1})$ with $a_1\neq 0$ defines a unique Springer isomorphism.

When $G$ is simple, P. Sobaje reminds that Springer isomorphisms exist if $k$ is of separably good characteristic for $G$ (see \cite[Theorem 1.1 and Remark 2]{SOB1}). Moreover, in \cite[\S 7]{SOB2} the author investigates the non separably good characteristic case. He also emphasises that in separably good characteristics one can always find an isomorphism $\phi : \mathcal{N}_{\red}(\mathfrak{g}) \rightarrow \mathcal{V}_{\red}(G)$ that restricts to an isomorphism of reduced schemes $W(\radu(B)) \cong \radu(B) \rightarrow \Rad_U(B)$ for any Borel subgroup $B\subset G$. The author then stresses that the differential of this restriction at $0$ is a scalar multiple of the identity (this does not depend on the considered Borel subgroup). More precisely, the situation is the following (note that the two vertical arrows are closed immersions):

\begin{figure}[H]
\begin{center}
\[
\begin{tikzcd}[sep=large]
\mathcal{N}_{\red}(\mathfrak{g})   \arrow[r, "\phi"] &  \mathcal{V}_{\red}(G),  \\
W(\radu(B)) \arrow[u, hook, "/" marking] \arrow[r, "\tilde{\phi}" = k\cdot \id"]& \Rad_U(B),\arrow[u, hook, "/" marking] & 
\end{tikzcd}
\]
\end{center}
\end{figure}

\noindent and $\tilde{\phi}$ is such that $(\diff\tilde{\phi})_0 = \lambda \id$ with $\lambda \in k^{*}$. Note that the fact that the restriction $\tilde{\phi}$ maps to $\Lie(\Rad_U(B))$ is actually a consequence of the $\phi$-infinitesimal saturation of the unipotent radical of Borel subgroups (see Definition \ref{def_phi_sat} for a definition of this notion as well as Remark \ref{parab_phi_inf_sat} for a proof of this fact). P. Sobaje attributes the existence of such specific Springer isomorphisms to G. McNinch and D. Testerman (see \cite[Theorem E]{MT2}). This can be generalised to any reductive $k$-group in separably good characteristic. Indeed the properties required for $\phi$ are preserved under separable isogenies for $G$ (see \cite[Corollary 5.5]{HER} and \cite[Proposition 9]{MCNOPT}). This allows us to consider semisimple groups rather than simple ones. The reductive case follows because ``Springer isomorphisms are insensitive to the center''. Indeed the radical of $\mathfrak{g}$ is the Lie algebra of the center of $G$, thus is toral and does not contain any $p$-nilpotent elements (see Lemma \ref{radical_reductif_centre} and Remark \ref{centre_noyau_ad} ii)).

Let $P$ be a so-called restricted parabolic subgroup, that is, a parabolic subgroup for which the Lie algebra of the unipotent radical has $p$-nilpotency order equals to $1$. For instance, when $p>\h(G)$ (where $\h(G)$ is the Coxeter number of $G$) any Borel subgroup of $G$ is restricted (because then, as explained in the introduction, any $p$-nilpotent element of $G$ has $p$-nilpotent order equals to $1$, see \cite{MAUS} for more details). We denote:
\begin{itemize}
	\item by $\radu(P)$ the Lie algebra of the unipotent radical of $P$, 
	\item and by $\Rad_U(P)$ the unipotent radical of $P$.
\end{itemize}  
In \cite[Proposition 5.3]{Sei} (credited by G. M. Seiz to J-P. Serre) the author explains how to obtain a $P$-equivariant isomorphism of algabraic groups $\exp_P: \mathfrak{rad_u(p)} \rightarrow \Rad_U(P)$ by base-changing the usual exponential map in characteristic $0$. Note that here $\mathfrak{rad_u(p)}$ is endowed with the group law induced by the Baker--Campbell--Hausdorff law (which is well defined, see the preamble of \cite[Section 5]{Sei}). When $p>\h(G)$, P. Sobaje explains in \cite[Theorem 6.0.2]{SOB2} that there exists a unique Springer isomorphism $\phi$ that restricts to $\exp_P$, whose tangent map is the identity and that is compatible with the $p$-power. Note that when $p<\h(G)$, maps satisfying the three aforementioned requirements still exist, but this time, there are many of such. P. Sobaje study and classify in \cite[Theorem 6.0.2]{SOB2} a specific class of such maps, the so-called generalised exponential maps.

Let us first consider the case $p>\h(G)$ to remind the reader of the construction of such group isomorphisms $\exp_B$ where $B \subset G$ is a Borel subgroup, and how this leads to integration results. In this settings, the reader might also be referred to a recent article of V. Balaji, P. Deligne and A. J. Parameswaran (see \cite[\S 6]{BDP}) for a detailed construction of the group isomorphism $\exp_B : \mathfrak{u_b}\rightarrow U_B$. This, combined with Corollary \ref{corollaire_LMT}, actually allows to integrate restricted $p$-nil $p$-Lie subalgebras of $\mathfrak{g}$:

\begin{proposition}
Let $G$ be a reductive $k$-group over an algebraically closed field of characteristic  $p> \h(G)$. Let $\mathfrak{u} \subset \mathfrak{g}$ be a restricted $p$-nil $p$-Lie subalgebra. Then $\mathfrak{u}$ can be integrated into a smooth connected unipotent subgroup of $G$. Namely there exists a smooth connected unipotent subgroup $U \subset G$ such that $\Lie(U) \cong \mathfrak{u}$ as Lie algebras. 
\label{exp_car_p}
\end{proposition}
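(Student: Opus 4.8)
The plan is to reduce the statement to J.-P. Serre's truncated exponential on the unipotent radical of a Borel subgroup, and then to observe that under this exponential the $p$-nil Lie subalgebras correspond to closed subgroups. First, since $p>\h(G)$, the prime $p$ is in particular not a torsion prime for $G$, so Corollary \ref{corollaire_LMT} together with point (ii) of the remark following it applies: after replacing $\mathfrak{u}$ by its isomorphic image in $\mathfrak{g}/\rad(\mathfrak{g})$ if necessary, we may assume $\mathfrak{u}\subseteq\radu(B)=\Lie(\Rad_U(B))$ for some Borel subgroup $B\subseteq G$. In particular $\mathfrak{u}$ is a Lie subalgebra of the nilpotent Lie algebra $\radu(B)$; moreover, by \cite[4.4 Corollary]{MAUS} every element of $\mathfrak{u}$, being $p$-nilpotent, has $p$-nilpotency order $1$, so the $p$-operation of $\mathfrak{u}$ is identically $0$.

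Next I would invoke the theorem of Serre recalled in the excerpt: for $p>\h(G)$ the unique Springer isomorphism restricts, for the chosen Borel $B$, to an isomorphism of algebraic groups $\exp_B\colon(\radu(B),\circ)\to\Rad_U(B)$, where $\circ$ is the (truncated, polynomial) Baker--Campbell--Hausdorff law on $\radu(B)$. The point is now formal: every homogeneous component of the BCH series other than its linear part is an iterated Lie bracket of the two variables, so any Lie subalgebra of $\radu(B)$ is automatically stable under $\circ$ and under the inversion $x\mapsto -x$. Hence the linear subspace $\mathfrak{u}$, equipped with the restriction of $\circ$, is a closed subgroup scheme of $(\radu(B),\circ)$; its underlying scheme being the affine space $\mathbb{A}^{\dim\mathfrak{u}}_k$, it is smooth, connected and unipotent.

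Finally I would transport this through $\exp_B$: set $U:=\exp_B(\mathfrak{u})\subseteq\Rad_U(B)\subseteq G$. Since $\exp_B$ is an isomorphism of algebraic groups, $U$ is a closed subgroup of $\Rad_U(B)$, isomorphic to $(\mathfrak{u},\circ)$, hence smooth, connected and unipotent. Moreover $\Lie(\exp_B)$ is an isomorphism of ($p$-)Lie algebras $\Lie(\radu(B),\circ)\xrightarrow{\sim}\radu(B)$, and the Lie algebra of the BCH-group structure on a nilpotent Lie algebra is canonically that Lie algebra itself (its bracket being read off from the bidegree-$(1,1)$ term of $\circ$); restricting to the closed subgroup $(\mathfrak{u},\circ)$ yields $\Lie(U)\cong\mathfrak{u}$. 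I expect the only points requiring care to be the scheme-theoretic bookkeeping — that the restriction of $\circ$ genuinely defines a closed subgroup scheme on $\mathfrak{u}$ and that $\Lie$ commutes with this construction — and, at the reduction step, that $p$-nilpotency forces $\mathfrak{u}$ into $\radu(B)$ rather than merely into $\Lie(B)$; both are, however, essentially already contained in Corollary \ref{corollaire_LMT} and in the cited statement of Serre.
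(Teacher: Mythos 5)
Your proposal is correct and follows essentially the same route as the paper: embed $\mathfrak{u}$ into $\radu(B)$ via Corollary \ref{corollaire_LMT} (using that $p>\h(G)$ is not a torsion prime), note that the truncated Baker--Campbell--Hausdorff law restricts to any Lie subalgebra since its higher terms are iterated brackets, and transport the resulting closed subgroup $(\mathfrak{u},\circ)$ through Serre's isomorphism $\exp_B$ to obtain a smooth connected unipotent $U$ with $\Lie(U)\cong\mathfrak{u}$. The only cosmetic difference is in the last step, where the paper identifies $\mathfrak{u}$ with $W(\mathfrak{u})$ and invokes smoothness to get $\Lie(U)\cong\Lie(\mathfrak{u})\cong\mathfrak{u}$, while you read the bracket off the bidegree-$(1,1)$ term of the BCH law; both amount to the same verification.
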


Proposition \ref{système_de_coordonnées} will be useful to show the above statement. In the aforementioned framework and as underlined by J-P. Serre in \cite[2.2]{SER2}, if $B\subset G$ is a Borel subgroup the group law on $\mathfrak{\radu}(B)$ comes from the characteristic zero framework by lifting and specialisation.
	
	More precisely, if we denote:
	\begin{enumerate}
	\item by $G_{\mathbb{Z}}$ a reductive $\mathbb{Z}$-group and $B_{\mathbb{Z}} \subset G_{\mathbb{Z}}$ a Borel subgroup such that $G=G_{\mathbb{Z}}\otimes_{\mathbb{Z}} k$ and $B= B_{\mathbb{Z}}\otimes_{\mathbb{Z}} k$ (such groups both exist according to \cite[XXV, Corollaire 1.3]{SGA33}),
	\item by $G_\mathbb{Q}$ and $B_{\mathbb{Q}}$ the groups obtained from $G_{\mathbb{Z}}$ and $B_{\mathbb{Z}}$ by base change from $\mathbb{Z}$ to $\mathbb{Q}$,
\end{enumerate}	  
\noindent then:

	\begin{proposition}[({\cite[2.2]{SER2}})]
	 The law making $\radu(B)$ into an algebraic $k$-group comes from the one on $\radu(B)_{\mathbb{Q}}$: it is defined over $\mathbb{Q}$, extends on $\radu(B)_{\mathbb{Z}_{(p)}}$ and induces a group law on $\radu(B)_{\mathbb{F}_p}$ then on $\radu(B)$ by specialisation. Namely, the situation can be read on the following diagram, the point being the existence of the dotted arrow. In other words the Baker--Campbell--Hausdorff law has $\mathbb{Z}_{(p)}$-integral coefficients:
	 
\begin{figure}[H]
\begin{center}
\[
\begin{tikzcd}[sep=small]
(B , G ) \ar[d]& & & (\radu(B), \circ) \ar[d]  \\
(B_{\mathbb{F}_p} ,G_{\mathbb{F}_p} )\ar[r]\ar[dr,gray] & (B_{\mathbb{Z}_{(p)}}, G_{\mathbb{Z}_{(p)}}) \ar[d,gray] &( B_{\mathbb{Q}}, G_{\mathbb{Q}})\ar[l]\ar[dl,gray] & (\radu(B)_{\mathbb{F}_p}, \circ) \ar[dr,gray]\ar[r] &(\radu(B)_{\mathbb{Z}_{(p)}}, \circ) \ar[d,gray] & (\radu(B)_{\mathbb{Q}}, \circ). \ar[l, dashed] \ar[dl,gray] \\
& \textcolor{gray}{(B_{\mathbb{Z}} , G_{\mathbb{Z}})}& & & \textcolor{gray}{\radu(B)_{\mathbb{Z}}} & 
\end{tikzcd}
\]
\end{center}
\end{figure}
	\label{système_de_coordonnées}
	\end{proposition}

We refer the reader to \cite[Annexe D]{JEAthese} for a proof of Proposition \ref{système_de_coordonnées} stated in these terms. We are now able to show Proposition \ref{exp_car_p}:

\begin{proof}[Proof of Proposition \ref{exp_car_p}]

According to \cite[II, Lecture 2, Theorem 3]{SER1}, when $p>\h(G)$ the Lie algebra of the unipotent radical of any Borel subgroup $B \subset G$ is endowed with a group structure induced by the Baker--Campbell--Hausdorff law (which has $p$-integral coefficients as shown for example in \cite[2.2, Propositon 1]{SER2}). This law being defined with iterated Lie brackets, it reduces to any subalgebra of $\radu(B)$, endowing it with a group structure. As by assumption one has $p>\h(G)$, the characteristic of $k$ is not of torsion for $G$. Thus there exists a Borel subgroup $B \subset G$ such that $\mathfrak{u}$ is a Lie subalgebra of $\radu(B)$ (according to Corollary \ref{corollaire_LMT}). Hence what precedes in particular implies that $\mathfrak{u}$ is an algebraic group for the Baker--Campbell--Hausdorff law.

The isomorphism of groups $\exp_{\mathfrak{b}} : \radu(B) \rightarrow \Rad_U(B)$ defined by J-P. Serre in \cite[Part II, Lecture 2, Theorem 3]{SER1} thus restricts to $\mathfrak{u}$. Denote by $U$ the image of the restricted morphism. It is a smooth connected unipotent subgroup of $G$. 

It remains to show that $\Lie(U) \cong \mathfrak{u}$. 

The algebraic groups $\mathfrak{u}$ and $U$ being smooth, the isomorphism of algebraic groups $(\exp_{\mathfrak{b}})_{\mid \mathfrak{u}}$ induces an isomorphism $\Lie(U) \cong \Lie(\mathfrak{u})$ (see \cite[VIIA Proposition 8.2]{SGA31}). As $\mathfrak{u}$ is a vector space over a field one has $\Lie(\mathfrak{u}) \cong \mathfrak{u}$, hence $\Lie(U) \cong \mathfrak{u}$ as Lie algebras. In other words, the map $\exp_{\mathfrak{b}}$ induces the identity on tangent spaces. Therefore the restricted $p$-nil $p$-subalgebra $\mathfrak{u}$ integrates into a smooth connected  unipotent subgroup $U$ of $G$.
\end{proof}

\begin{remarks}
The following points should help the reader to better understand the issues that are specific to the characteristic $p>0$ framework.
\begin{enumerate}
\item What precedes ensures that when $p>\h(G)$ any restricted $p$-nil $p$-subalgebra of $\mathfrak{g}$ can be integrated into a smooth unipotent connected subgroup of $G$. In particular, under this assumption on $p$ any restricted $p$-nil $p$-subalgebra of an integrable $p$-nil subalgebra of $\mathfrak{g}$ can be integrated. This is not true in general, as shown in section \ref{contre_ex_int} (see in particular Remark \ref{remark_counter_example}).

\item Note that the work of G. Seitz mentioned in the preamble of this section (see \cite[section 5]{Sei}, in particular the Proposition 5.3) allows to relax assumptions on $p$ and to still obtain an integration when $G$ is semi-simple (this should extend easily to arbitrary reductive groups): let $P \subsetneq G$ be a proper parabolic subgroup and let $\mathfrak{p}$ be its Lie algebra. Denote by $\cl(\mathfrak{u_p})$ the nilpotent index of the Lie algebra of $U_P$, the unipotent radical of $P$. Results of G. Seitz ensure that there exists an isomorphism of algebraic groups between $\exp_P : \mathfrak{u_p} \rightarrow U_P$, where the Lie algebra is endowed with an algebraic group structure given by the Baker--Campbell--Hausdorff law  (which is here well defined as shown by the author). This result, coupled with works of G. McNinch (\cite{M2}) allows to integrate several nil Lie subalgebras  in characteristic $p<\h(G)$. Indeed, when $p$ is not of torsion for $G$, given a $p$-nil subalgebra $\mathfrak{u} \subsetneq \mathfrak{g}$ there exists a so-called optimal parabolic subgroup $P_{\mathfrak{u}}(G)$  such that $\mathfrak{u}$ is a Lie subalgebra of $\mathfrak{u_{p_u}}$ (see also \cite[II.3]{JEAthese} for more details on this paper and the aforementioned construction, with the same notations as the one used here). Therefore a precise bound for \ref{exp_car_p} would be to consider $p> \cl(\mathfrak{u_{p_u}})$. However this doesn't help to get rid of the difficulties that occur for small separably good characteristics $p$, hence we preferred a ``rough'' statement which avoid to get lost in technical details here.

\item The situation might seem to be quite similar to the characteristic zero framework. Unfortunately, and contrary to what happens in characteristic zero, the adjoint representation is not compatible with this integration in general. Namely it is not always true that $\exp(t\ad(x))= \Ad(\exp(tx))$ for any $x \in \mathfrak{g}$ (where we denote by $\ad$ the derived representation $\Lie(\Ad)$). Nevertheless this is automatically satisfied if the adjoint representation $\Ad$ is of low height according to \cite[4.6]{BDP}. The authors mention this result as a corollary of \cite[Lecture 4, Theorem 5]{SER}.

\item Let us emphasize another property that is lost in characteristic $p>0$ and already mentioned in the introduction (see \cite[IV,\S 2, 4.5]{DG} for more details on the characteristic $0$ case): even in characteristic $p>\h(G)$ the integration process does no longer induces an equivalence of categories between the category of restricted $p$-nil $p$-Lie algebras and the category of smooth connected unipotent algebraic groups. More precisely: 
\begin{itemize}
	\item let $U$ be a unipotent subgroup of $G$ and denote by $\mathfrak{u} := \Lie(U)$ its Lie algebra. The field $k$ is algebraically closed, thus perfect. The subgroup $U$ is therefore $k$-embeddable into the unipotent radical of a Borel subgroup $B \subset G$. 
	\item Let $\log_B : \Rad_U(B) \rightarrow \radu(B)$ be the inverse isomorphism of algebraic groups of the morphism $\exp_{\mathfrak{b}}$, see \cite[\S 6]{BDP} for an explicit construction. As for $\exp_{\mathfrak{b}}$ it is induced by an isomorphism of reduced $k$-schemes $\log: \mathcal{V}_{\red}(G) \rightarrow\mathcal{N}_{\red}(\mathfrak{g})$. 
\end{itemize}
\noindent In general one cannot expect the integrated group $\exp(\mathfrak{u})$ to be the starting group $U$. Equivalently the  equality $\log_B(U) \neq \mathfrak{u}$ needs not being satisfied a priori. 

For example let $G = \SL_3$ and $p>3$. We consider the unipotent connected smooth subgroups of $G$ generated respectively by the matrices $\begin{pmatrix}
 1 & t & 0 \\
 0 & 1 & 0 \\
 0 & 0 & 1
 \end{pmatrix}$ and $\begin{pmatrix}
 1 & t & t^p \\
 0 & 1 & 0 \\
 0 & 0 & 1
 \end{pmatrix}$. As $U_1 \neq U_2$, the restricted $p$-nil $p$-algebras $\log(U_1)$ and $\log(U_2)$ do not coincide, even though the Lie algebras $\mathfrak{u_1} = \mathfrak{u_2}$ are the same (namely it is the restricted $p$-nil $p$-Lie algebra generated by $\begin{pmatrix}
 0 & 1 & 0\\
 0 & 0 & 0 \\
 0 & 0 & 0
\end{pmatrix}$).
 
\end{enumerate}
\label{pas_corresp_bij} 
\end{remarks}
	
\subsection{From Lie algebras to groups: a natural candidate}
\label{From_Lie_algebras_to_groups_a_natural_candidate}
Let $G$ be a reductive group over an algebraically closed field $k$ of characteristic $p>0$ which is assumed to be separably good for $G$. Let $\phi : \mathcal{N}_{\red}(\mathfrak{g}) \rightarrow \mathcal{V}_{\red}(G)$ be a Springer isomorphism for $G$ such that for any Borel subgroup $B\subset G$ the differential of $\phi$ restricted to $\radu(B)$ is the identity at $0$ (this last assumption is allowed by \cite[Theorem 1.1 and Remark 2]{SOB1} as explained in section \ref{Springer_iso}). It defines for any $p$-nilpotent element of $\mathfrak{g}$ a $t$-power map:
\begin{alignat*}{5}
\phi_{x} : \: & \mathbb{G}_a\: & \rightarrow \: & G  \\
	   \: & t \:& \mapsto \: & \phi_x(t).\\
\end{alignat*}
\noindent Let $\mathfrak{u}$ be a restricted $p$-nil $p$-Lie subalgebra of $\mathfrak{g}$. The $t$-power map $\phi_x$ induces the following morphism:
\begin{alignat*}{5}
\psi_{\mathfrak{u}} : W\: &(\mathfrak{u}) \times \: & \mathbb{G}_a\: & \: & \rightarrow \: & G  \\
	   \: & (x, \: & t)\: & \:& \mapsto \: & \phi_x(t),\\
\end{alignat*}
\noindent where the notations are those of \cite[I, 4.6]{SGA31} (see also \cite[II, Lemme 4.11.7]{SGA31}). Denote by $J_\mathfrak{u}$ the subgroup of $G$ generated by $\psi_{\mathfrak{u}}$ as a $\fppf$-sheaf (see \cite[VIB, Proposition 7.1 and Remark 7.6.1]{SGA31}). This is:
\begin{enumerate}
\item a connected subgroup by \cite[VIB, Corollaire 7.2.1]{SGA31} as $W(\mathfrak{u})$ is geometrically reduced and geometrically connected; 
\item smooth (according to \cite[VIB, Proposition 7.1 (i)]{SGA31} as $G$ is locally of finite type over the field $k$);
\item unipotent as we will see in section \ref{section_Deligne} (see Lemma \ref{alg_lie_groupe_engendre}).
\end{enumerate} 
\noindent One thus needs to compare $\mathfrak{u}$ with the Lie algebra of $J_{\mathfrak{u}}$, denoted by $\mathfrak{j_u}$. We will show that when the restricted $p$-nil $p$-Lie subalgebra $\mathfrak{u}\subseteq \mathfrak{g}$ satisfies some maximality properties (as the one required in the statements of Lemmas \ref{p_nil_integrable} and \ref{p_rad_integrable}), it is integrable by $J_{\mathfrak{u}}$. Before going any further let us stress out that when this integration holds true the normalisers $N_G(J_{\mathfrak{u}})$ and $N_G(\mathfrak{u})$ turn out to be the same. More precisely:

	\begin{lemma}
Let $\mathfrak{u}$ be a restricted $p$-nil $p$-Lie subalgebra of $\mathfrak{g}$. The subgroup $N_G(\mathfrak{u})$ normalises $J_\mathfrak{u}$.
\label{inclusion_normalisateurs_J_u_et_u}
	\end{lemma}

	\begin{proof}
	First notice that $\phi$ is $G$-equivariant (because it is a Springer isomorphism), so is $\phi_x$. Hence the morphism $\psi_{\mathfrak{u}}$ is compatible with the $G$-action on $\mathfrak{u}$. In other words, for any $g \in G$ and any $(x,t) \in \mathfrak{u} \times \mathbb{G}_a$ the equality $\Ad(g)\psi_{\mathfrak{u}}(x,t) = \psi_{\mathfrak{u}} \left(\Ad(g)x,t\right)$ is satisfied.
	
Let $R$ be a $k$-algebra, and consider $g \in N_G(\mathfrak{u})(R)$ and $h \in J_{\mathfrak{u}}(R)$. By definition of $J_{\mathfrak{u}}$ there exists an $\fppf$-covering $S \rightarrow R$ such that $h_S =\psi_{\mathfrak{u}}(x_1,s_1) \cdots \psi_{\mathfrak{u}}(x_n,s_n)$ for $x_i \in \mathfrak{u}_R \otimes_R S $ and $s_i \in S$ (so that $\psi_{\mathfrak{u}}(x_i,s_i) \in J_{\mathfrak{u}}(S)$). But then one has $ (\Ad(g)h)_S = \prod_{i=1}^n \Ad(g_S)\psi_{\mathfrak{u}}(x_i,s_i)$. The morphism $\psi_{\mathfrak{u}}$ being compatible with the $G$-action, this can be rewritten as follows:
	\[(\Ad(g)h)_S = \prod_{i=1}^n \Ad(g_S)\psi_{\mathfrak{u}}(x_i,s_i) = \prod_{i=1}^n \psi_{\mathfrak{u}}\left(\Ad(g_S)x_i,s_i\right) \in J_{\mathfrak{u}}(S)\cap G(R) = J_{\mathfrak{u}}(R),\] 
\noindent where the equality $J_{\mathfrak{u}}(S)\cap G(R) = J_{\mathfrak{u}}(R)$ follows from the fact that $J_{\mathfrak{u}}$ is generated by $\psi_{\mathfrak{u}}$ as a $\fppf$-sheaf. 

We thus have shown that $\Ad(g)h \in J_{\mathfrak{u}}(R)$ for all $g \in N_{\mathfrak{g}}(\mathfrak{u})(R)$. In other words we have shown the inclusion $N_G(\mathfrak{u})(R) \subseteq N_G(J_\mathfrak{u})(R)$ for any $k$-algebra $R$. Yoneda's Lemma then leads to the desired inclusion $N_G(\mathfrak{u}) \subseteq N_G(J_\mathfrak{u})$.
	\end{proof}

\begin{lemma}
When $J_{\mathfrak{u}}$ integrates $\mathfrak{u}$, the equality $N_G(J_{\mathfrak{u}}) = N_G(\mathfrak{u})$ is satisfied.
\label{normalisateurs_p_inf}
\end{lemma}

\begin{proof}
By Lemma \ref{inclusion_normalisateurs_J_u_et_u} one only needs to show the inclusion $N_G(J_{\mathfrak{u}}) \subseteq N_G(\mathfrak{u})$. This is direct according to Lemma \ref{normalisateurs_inclusion} as the equality $\Lie(J_{\mathfrak{u}}) = \mathfrak{u}$ is satisfied by assumption.
\end{proof}	
	
	\begin{remarks}
	Let us emphasize the following points:
	\begin{enumerate}
\item The assumptions on normalisers in Lemma \ref{normalisateurs_p_inf} in particular hold true when $\mathfrak{u}$ is a subalgebra of $\mathfrak{g}$ made of all the $p$-nilpotent elements of the radical of $N_{\mathfrak{g}}(\mathfrak{u})$. This will be shown in Lemma \ref{p_nil_integrable} below.
\item As mentioned in the introduction, not any Springer isomorphism is compatible with the $p$-structure of the restricted $p$-nil $p$-algebra. This is actually not a requirement here. Indeed one considers the image of all $p$-nilpotent elements of $\mathfrak{u}$ so $x$ and all its $p$-powers are taken into account in the integration process. Moreover, one only needs to compare the subalgebras $\mathfrak{u}$ and $\mathfrak{j_u}$ and both of them inherit their $p$-structure from that of $\mathfrak{g}$.
	\end{enumerate}
	\label{remark_p_compatibility}
	\end{remarks}

\subsection{Obstructions to the existence of an integration for embedded restricted $p$-nil $p$-subalgebras}
	\label{section_contre_ex_integration}
	\subsubsection{Witt vectors and family of counter-examples}\phantom{ooo}\\
	In what follows we make use of some general results on Witt vectors to construct a family of counter-examples to the existence of an integration of morphisms and restricted $p$-nil $p$-Lie algebras in general:
	
\begin{ex}
\label{contre_ex_int}
Let $k$ be a perfect field of characteristic $p>0$ and let us consider the following commutative diagram of algebraic groups:

\begin{figure}[H]
\begin{center}
\begin{tikzpicture}
    \matrix(m)[matrix of math nodes, row sep=2em, column sep=2em, text height=1.5ex, text depth=0.25ex]
    {0       & \mathbb{G}_a       & W_2         & \mathbb{G}_a        & 0,  \\
    0        & \mathbb{G}_a       & W'_2 = \left(W_2 \times \mathbb{G}_a\right)/\langle i(x)-i(x^p) \mid x \in \mathbb{G}_a\rangle       & \mathbb{G}_a        & 0.    \\};
     \path[->,font=\scriptsize]

    (m-1-1) edge (m-1-2)
    (m-1-2) edge node[above]{$i$}(m-1-3)
    (m-1-3) edge node[above]{$r$}(m-1-4)
    (m-1-4) edge (m-1-5)

    (m-2-1) edge (m-2-2)
    (m-2-2) edge  node[above]{$i$}(m-2-3)
    (m-2-3) edge node[above]{$r$}(m-2-4)
    (m-2-4) edge (m-2-5)

    (m-1-2) edge node[left] {$\Frob$} (m-2-2)
    (m-1-3) edge (m-2-3)
    (m-1-4) edge node[left]{$=$}(m-2-4)
    ;
\end{tikzpicture}
\end{center}
\end{figure}
\noindent where:
\begin{itemize}
\item we denote by $\Frob$ the absolute Frobenius automorphism,
\item the central term of the lower sequence is the pushout of the morphisms $i$ and $\Frob$.
\end{itemize}  
\noindent The group $\mathbb{G}_a$ being smooth, the exactness of the two horizontal sequences is preserved by derivation (see \cite[II, \S5, Proposition 5.3]{SGA31} and \cite[II, \S7, n\degree 3, Proposition 3.4]{DG}). This leads to the following commutative diagram of restricted $p$-Lie algebras and $p$-morphisms:
\begin{figure}[H]
\begin{center}
\begin{tikzpicture}
    \matrix(m)[matrix of math nodes, row sep=2em, column sep=2em, text height=1.5ex, text depth=0.25ex]
    {0       & \Lie(\mathbb{G}_a)      & \mathfrak{w}_2 := \Lie(W_2)        & \Lie(\mathbb{G}_a)        & 0   \\
    0        &  \Lie(\mathbb{G}_a)      &  \mathfrak{w}'_2       &  \Lie(\mathbb{G}_a)
          & 0.    \\};
     \path[->,font=\scriptsize]

    (m-1-1) edge (m-1-2)
    (m-1-2) edge node[above] {$\Lie(i)$} (m-1-3)
    (m-1-3) edge node[above] {$\Lie(r)$}(m-1-4)
    (m-1-4) edge (m-1-5)

    (m-2-1) edge (m-2-2)
    (m-2-2) edge node[above] {$\Lie(i)$} (m-2-3)
    (m-2-3) edge node[above] {$\Lie(r)$} (m-2-4)
    (m-2-4) edge (m-2-5)

    (m-1-2) edge node[left] {$\Lie(\Frob) = 0$} (m-2-2)
    (m-1-3) edge node[left]{$\pi$}(m-2-3)
    (m-1-4) edge node[left]{$=$}(m-2-4)
    ;
\end{tikzpicture}
\end{center}
\end{figure}
\noindent As $\Lie(\Frob) = 0$ the $p$-morphism $\mathfrak{w'_2} \rightarrow k$ is split (as a $p$-morphism). Let $s :\Lie(\mathbb{G}_a) \rightarrow \mathfrak{w}_2'$ be the resulting splitting.
						    
Even though $\Lie(\mathbb{G}_a)$ and $\mathfrak{w}_2$ are integrable, this splitting does not lift into a morphism of algebraic $k$-groups. The field $k$ being perfect, one only needs to check this on $k$ points. As the vertical morphisms induce the identity morphism on $k$-points if the lifting $s : \Lie(\mathbb{G}_a) \rightarrow \mathfrak{w_2}'$ were integrable into  a morphism of algebraic groups $\sigma : \mathbb{G}_a(k) \rightarrow W'_2(k)$ such that $\Lie(\sigma)_k = s_k$, the lower exact sequence of the above commutative diagram of algebraic groups would be split (because the $\Lie$-functor is left exact). According to the previous remark on $k$-points, the following exact sequence would then also be split: 
\begin{figure}[H]
\begin{center}
\begin{tikzpicture}
    \matrix(m)[matrix of math nodes, row sep=2em, column sep=2em, text height=1.5ex, text depth=0.25ex]
    {0  & \underbrace{\mathbb{G}_a(k)}_{=k} & W_2(k)    & k  & 0   \\};

    \path[->,font=\scriptsize]

    (m-1-1) edge (m-1-2)
    (m-1-2) edge (m-1-3)
    (m-1-3) edge (m-1-4)
    (m-1-4) edge[bend right, dashed] (m-1-3)
    (m-1-4) edge (m-1-5)
    ;
\end{tikzpicture}
\end{center}
\end{figure}
\noindent This leads to a contradiction as $W_2$ would appear as a vector group, while it has $p^2$-torsion (see for example \cite[VII, \S 2, n\degree 10, Proposition 9]{Ser1988}: the construction of $W_2'$ is explained in the proof of the proposition, see also \cite{RosM} for a reminder of vector groups).
 
\end{ex}

\begin{remark}
As pointed out by the referee, connected abelian unipotent subgroups of dimension $2$ over a field are classified, up to isomorphism, by a pair of invariants (see the paragraph after \cite[VII, \S 2, 11, Proposition 11]{Ser1988}). Let $U$ be such a group and denote by $U''$ the subgroup of $U$ whose elements have order dividing $p$. The group law is denoted additively. As exaplained in the aforementioned reference, the purely inseparable isogeny $U/pU \rightarrow U'' : x \mapsto px$ has degree $p^h$ and $h$ is the second invariant of the above pair. This invariant could also be used to show that the splitting $s$ is not integrable in Example \ref{contre_ex_int} above (as this would imply that $W_2$ has  $h=2$ while it is actually equal to $1$).
\end{remark}

\subsubsection{Obstructions in the reductive framework}\phantom{ooo}\\

Let us go back to the framework we are interested in: let $G$ be a reductive group over an algebraically closed field of characteristic $p>0$ which is assumed to be separably good for $G$. Let $U$ and $V \subset G$ be two subgroups. What precedes in particular tells us that if $f : \mathfrak{u}=\Lie(U) \rightarrow \mathfrak{v}=\Lie(V)$ is a morphism of restricted $p$-Lie algebras, it is not true in general that there exists a morphism of groups $U \rightarrow V$ such that $\Lie(\phi) = f$. Namely the map $\Hom(G, \mathbb{G}_a) \rightarrow  \Hom_{p-\Lie}(\mathfrak{g}, k)$ is not surjective. In what follows a morphism of restricted $p$-Lie algebras is integrable if it lifts into a morphism of algebraic groups with smooth kernel.

\begin{remarks}
 One can make the following important remarks on integration of $p$-nil subalgebras:
 \begin{enumerate}
  \item let $\mathfrak{u}\subset \mathfrak{g}$ be a restricted $p$-nil $p$-subalgebra which is integrable into a unipotent smooth connected subgroup $U\subseteq G$. Example \ref{contre_ex_int} together with Lemma \ref{integration_alg_vs_morphism} also shows that not any restricted $p$-nil $p$-subalgebra $\mathfrak{v} \subseteq \mathfrak{u}$ of a restricted $p$-nil $p$-Lie algebra is integrable into a smooth connected unipotent group $V$ such that $\Lie(V) = \mathfrak{v}$. Nevertheless, if we require the inclusion $\mathfrak{v} \subseteq \mathfrak{u}$ to be integrable into a morphism of algebraic groups with smooth kernel, then $\mathfrak{v}$ is integrable into a smooth unipotent subgroup of $U$ (by virtue of \cite[II,\S 5, Proposition 5.3]{DG}). Hence, in what follows a morphism of $p$-Lie algebras $f : \mathfrak{h} \rightarrow \mathfrak{h}'$ is said to be integrable if there exists a morphism of algebraic groups $\phi : H  \rightarrow H'$ with smooth kernel such that $\Lie(\phi)=f$.
 \item Let us stress out that only the restricted $p$-subalgebras $\mathfrak{h} \subseteq \mathfrak{g}$ can pretend to derive from an algebraic group (as this last property automatically implies that $\mathfrak{h}$ is endowed with a $p$-structure inherited from the group, see for example \cite[II, \S7, n\degree 3, Proposition 3.4]{DG}). Moreover, as underlined by the example presented in Remark \ref{pas_corresp_bij} (iii) the integration of restricted $p$-nil $p$-subalgebras of $\mathfrak{g}$ does no longer induce a bijective correspondence with unipotent subgroups of $G$. This in particular implies that the integration of morphisms of restricted $p$-Lie algebras depends on the integration of the Lie algebra one starts with.
   
 \end{enumerate}
\label{remark_counter_example}
\end{remarks} 

The following lemma makes a connection between integration of morphisms and integration of subalgebras:

\begin{lemma}
Let $G$ and $H$ be two algebraic smooth $k$-groups with Lie algebras $\mathfrak{g}:= \Lie(G)$, respectively $\mathfrak{h}:=\Lie(H)$. Assume that $f : \mathfrak{g} \rightarrow \mathfrak{h}$ is a morphism of restricted $p$-nil $p$-Lie algebras which is integrable into a morphism of groups with smooth kernel. Let us denote by $\phi : G \rightarrow H$ the resulting integrated morphism, then $f(\mathfrak{g})$ is integrable into an algebraic smooth connected $k$-group.
\label{integration_alg_vs_morphism}
\end{lemma}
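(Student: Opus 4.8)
The natural first candidate is the scheme-theoretic image of $\phi$. Since $\Lie(G)=\Lie(G^{0})$ and $f(\mathfrak g)=f(\Lie(G^{0}))$, one may replace $G$ by its identity component and assume $G$ smooth and connected; as $\mathfrak g$ is $p$-nil this forces $G$ to be unipotent, a fact used only at the very end. Let $L\subseteq H$ be the scheme-theoretic image of $\phi$: it is a closed subgroup scheme of $H$, it is connected since $G$ is, and it is reduced --- the scheme-theoretic image of a reduced scheme being reduced --- hence smooth, $k$ being algebraically closed and a reduced group scheme of finite type over $k$ being smooth. Thus $L$ is an algebraic smooth connected $k$-group, and it remains to identify $\Lie(L)$ with $f(\mathfrak g)$ inside $\mathfrak h$.

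To do so, factor $\phi$ as $G\xrightarrow{q}L\xrightarrow{\iota}H$ with $q$ faithfully flat onto the scheme-theoretic image and $\iota$ a closed immersion. Then $\Lie(\iota)$ is injective, so $f(\mathfrak g)=\Lie(\phi)(\mathfrak g)=\Lie(\iota)\bigl(\im\Lie(q)\bigr)$ is identified with $\im(\Lie(q))\subseteq\Lie(L)$, and everything comes down to the surjectivity of $\Lie(q)\colon\mathfrak g\to\Lie(L)$. Since $q$ is faithfully flat with smooth source and target, this amounts to $q$ being smooth, i.e.\ to $\Ker(q)=\Ker(\phi)$ being a smooth group scheme; if it is, $L$ does the job.

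The smoothness of $\Ker(\phi)$ is the crux, and it does fail in general --- e.g.\ the relative Frobenius of $\mathbb{G}_a$ integrates the zero morphism of $\Lie(\mathbb{G}_a)$ yet has kernel $\alpha_p$, so that $\phi(G)$ then has Lie algebra strictly larger than $f(\mathfrak g)=0$ --- and one must produce a smaller group. I would reduce as follows: putting $N:=\Ker(\phi)^{0}_{\red}$, a smooth connected normal subgroup of $G$, the map $q$ factors through the smooth (hence $\Lie$-surjective) quotient $G\twoheadrightarrow G_{1}:=G/N$, and $\Ker(G_{1}\to L)$ is finite, hence --- being normal in the connected group $G_{1}$ --- central; since the Lie algebra of a finite group scheme equals that of its infinitesimal identity component, one is reduced to integrating $\mathfrak g_{1}/\Lie(I)$, where $\mathfrak g_{1}:=\Lie(G_{1})$ and $I$ is an infinitesimal central subgroup scheme of the smooth connected unipotent group $G_{1}$. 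The remaining --- and I expect hardest --- point is to exhibit a smooth connected central subgroup $M\subseteq G_{1}$ with $\Lie(M)=\Lie(I)$: granting this, $G_{1}/M$ is smooth connected with $\Lie(G_{1}/M)=\mathfrak g_{1}/\Lie(I)\cong f(\mathfrak g)$, and we are done. I would construct such an $M$ by induction on $\dim G_{1}$ via a composition series of $G_{1}$ with central $\mathbb{G}_a$-quotients, using that an infinitesimal subgroup scheme of $\mathbb{G}_a$ has Lie algebra either $0$ or all of $\Lie(\mathbb{G}_a)$; keeping track that the induction produces a subgroup that is genuinely central in $G_{1}$ (not merely subnormal) is the delicate part.
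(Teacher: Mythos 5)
The paper's reading of ``integrable morphism'' is stronger than yours: in its proof the integrated morphism $\phi:G\to H$ is \emph{assumed to be a smooth morphism} of algebraic groups. With that hypothesis the lemma is essentially immediate: since $G$, $H$ are smooth and $\phi$ is smooth over the field $k$, the differential $f=\Lie(\phi)$ is surjective (\cite[II, \S 5, proposition 5.3]{DG}), so $f(\mathfrak{g})=\mathfrak{h}=\Lie(H^0)$ and $H^0$ itself integrates the image. Your Frobenius example is precisely excluded by this convention (the relative Frobenius is not smooth), so you correctly diagnosed where the naive ``take the image subgroup'' argument breaks, but you then set out to prove a genuinely stronger statement, namely integrability of $\im\Lie(\phi)$ for an arbitrary morphism of algebraic groups $\phi$.

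For that stronger statement your argument has two real problems. First, the step ``$\Ker(G_1\to L)$ is finite, hence, being normal in the connected group $G_1$, central'' is false in characteristic $p$: finite \emph{infinitesimal} normal subgroup schemes of connected groups need not be central, e.g. $\alpha_p\subset\mathbb{G}_a\rtimes\mathbb{G}_m$, and even inside a smooth connected unipotent group one has counterexamples, e.g. in the Heisenberg group $U_3$ with $\Lie(U_3)=\langle x,y,z\rangle$, $[x,y]=z$, the height-one subgroup with Lie algebra $\langle y,z\rangle$ is normal (the subspace is $\Ad$-stable) but not central (conjugation moves $y$ to $y+tz$); and such a subgroup does arise as the kernel of a quotient map, so your situation does not force centrality. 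Second, and decisively, the crucial point --- producing a smooth connected (central, or at least normal) subgroup $M\subseteq G_1$ with $\Lie(M)=\Lie(I)$ --- is only announced as ``the hardest point'' with a sketched induction whose delicate part you yourself flag; after your reductions this \emph{is} the whole content of the statement, so the proposal as written is incomplete, independently of the centrality issue (and your induction strategy leans on the centrality that fails above). A smaller remark: the inference ``$\mathfrak{g}$ $p$-nil forces $G$ unipotent'' also needs $G$ affine. The short fix is to use the paper's hypothesis: smoothness of $\phi$ kills the non-smooth kernels that all your extra work is designed to handle.
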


\begin{proof}
Denote by $\mathfrak{v} := f(\mathfrak{g})$ the image of the morphism $f$, which is assumed to be integrable into a morphism $\phi : G \rightarrow H$ with smooth kernel. One can a priori only expect the inclusions  $f(\mathfrak{g}) \subseteq \Lie(\phi(G)) \subseteq \mathfrak{h}$ to hold true. However, as $k$ is a field and $\ker(\phi)$ and $G$ are smooth so is $\phi(G)$. As a consequence, the restricted morphism $f = \Lie(\phi) : \mathfrak{g} \rightarrow \Lie(\phi(G))$ is surjective (see
\cite[II, \S 5, Proposition 5.3]{DG}), whence the equality $\mathfrak{v} = \Lie(\phi(G))$. In particular the restricted $p$-Lie algebra $\mathfrak{v}$ is integrable into an algebraic smooth connected $k$-group.
\end{proof}

\begin{remark}
Let $\phi : G\rightarrow H$ be a smooth morphism of algebraic $k$-groups. Assume that the derived morphism $\Lie(\phi) : \mathfrak{g} \rightarrow \mathfrak{h}$ has a splitting $s:\mathfrak{h} \rightarrow \mathfrak{g}$ which is also a morphism of restricted $p$-Lie algebras. It is worth noting that
this splitting does not necessarily lift into a splitting of algebraic groups: consider for instance the Artin--Schreier covering of $\mathbb{G}_a \rightarrow \mathbb{G}_a : t \mapsto t^p-t$, its derived morphism is nothing but the identity, whence it admits a splitting that does not lift to a splitting of algebraic groups.
%
%
%
%
%
\end{remark}

\section{\texorpdfstring{$\phi$}{Lg}--infinitesimal saturation and proof of Theorem \ref{generalisation_Deligne}}
\label{section_Deligne}

In what follows $G$ is a reductive group over an algebraically closed field $k$ of characteristic $p>0$ which is assumed to be separably good for $G$. Let $\phi : \mathcal{N}_{\red}(\mathfrak{g}) \rightarrow \mathcal{V}_{\red}(G)$ be a Springer isomorphism for $G$ such that for any Borel subgroup $B\subseteq G$ the differential at $0$ of $\phi$ restricted to $\rad_U(B)$ is the identity. 

	\subsection{\texorpdfstring{$\phi$}{Lg}-infinitesimal saturation}
	\label{phi_saturation_infinitesimale}
The following definition extends the notion of infinitesimal saturation to the separably good characteristics.
\begin{defn}
A subgroup $G' \subseteq G$ is $\phi$-infinitesimally saturated if for any $p$-nilpotent element $x \in \mathfrak{g'} := \Lie(G')$ the $t$-power map:
	\begin{alignat*}{3}
		\phi_x : \: & \mathbb{G}_a \: & \rightarrow \: & G,\\
			\:& t \: & \mapsto \: & \phi(tx),
	\end{alignat*} 
\noindent factorises through $G'$. In other words we ask for the following diagram to commute:
\begin{figure}[H]
\begin{center}
\[\begin{tikzpicture} 

 \matrix (m) [matrix of math nodes,row sep=2em,column sep=1.8em,minimum width=2em,  text height = 1.5ex, text depth = 0.25ex]
  {
    \mathbb{G}_a & G.\\
    G' & \\};
  \path[-stealth]
  	(m-1-1) edge
  			node [above] {$\phi_x$} (m-1-2)
    (m-1-1) edge[dashed] 
    		node [left] {$\exists$} (m-2-1) 
    (m-2-1) edge (m-1-2) ;
    	
\end{tikzpicture}\]
\end{center}
\end{figure}
\label{def_phi_sat}
\end{defn}
	
It follows from the definition that the group $G$ is itself $\phi$-infinitesimally saturated. Let us stress out that there are non trivial examples of $\phi$-infinitesimally saturated subgroups of $G$, namely:

\begin{lemma}
Any parabolic subgroup of $G$ is $\phi$-infinitesimally saturated, so are the Levi subgroups and the unipotent radical of any parabolic subgroup $P \subset G$.
\label{parab_phi_inf_sat}
\end{lemma}

\begin{proof}
In order to show this result we make use of the dynamic method introduced in \cite[4]{C} and \cite[\S2.1]{CGP}. Let $T \subset P \subset G$ be respectively a maximal torus and a parabolic subgroup of $G$. As $k$ is a field there exists a non-necessarily unique cocharacter of $T$, denoted here by $\lambda : \mathbb{G}_m \rightarrow G$ such that $P = P_G(\lambda)$ (see \cite[Proposition 2.2.9]{CGP}). We aim to show that for any $p$-nilpotent element $x \in \mathfrak{p}$ the image of the $t$-power map $\phi_x$ belongs to $P= P_G(\lambda)$. The field $k$ being algebraically closed, this is enough to show it on $k$-points. As a reminder when $P$ is of the form $P_G(\lambda)$ the $k$-points of $P$ are nothing but the set:
\[P_G(\lambda)(k) = \{g \in G(k) \mid \lim_{s\to 0} \lambda(s)\cdot g \text{ exists}\}.\]
\noindent Hence one only needs to prove that $\lim_{s\to 0} \lambda(s)\cdot \phi(tx)$ exists. This can be done by making use of the $G$-equivariance of $\phi$. This leads to the equality $\lambda(s)\cdot \phi(tx) = \phi(\lambda(s) \cdot tx)$. Moreover as $x \in \mathfrak{p_g(\lambda)}:= \Lie(P_G(\lambda))$ the limit $\lim_{s\to 0} \lambda(s) \cdot x$ exists by definition. We deduce from the above equality that $\lim_{s\to 0} \lambda(s)\cdot \phi(tx)$ exists, meaning that $\phi(tx)\in P_G(\lambda)= P$, whence the result.

The same reasoning as before, together with \cite[Lemma 2.1.5]{CGP}, allows us to show that: 
\begin{itemize}
\item the unipotent radical of any parabolic subgroup $P \subseteq G$ is $\phi$-infinitesimally saturated as 
\[\Rad_U(P_G(\lambda))(k) = \{g \in G(k) \mid \lim_{s\to 0} \lambda(s)\cdot g =1\},\]
\item the Levi subgroups of any parabolic subgroup $P \subseteq G$ are $\phi$-infinitesimally saturated as 
\[Z_G(P_G(\lambda))(k) = P_G(\lambda)\cap P_G(-\lambda).\]
\end{itemize}
\end{proof}

\begin{remark}
As mentioned in the preamble of section \ref{Springer_iso} when $p>\h(G)$ the only Springer isomorphism for $G$ that restricts to $\exp_B: \mathfrak{u_b} \rightarrow U_B$, whose tangent map is the identity and that is compatible with the $p$-power is nothing but the classical exponential map truncated at the power $p$ (this last condition is not necessary here). In this framework, being $\phi$-infinitesimally saturated is nothing but being $\exp$-saturated, i.e. being infinitesimally saturated as defined by P. Deligne in \cite[Définition 1.5]{D1}). 
\label{sat_inf_coincide}
\end{remark}

This being introduced we can show the following lemma which states that the generated subgroup $J_{\mathfrak{u}}$ seems to be the good candidate to integrate $\mathfrak{u}$ in general.

\begin{lemma}
Let $\mathfrak{u} \subseteq \mathfrak{g}$ be a restricted $p$-nil $p$-subalgebra. Then: 
\begin{enumerate}
\item the generated subgroup $J_{\mathfrak{u}}$ is unipotent,
\item the inclusion $\mathfrak{u} \subseteq \Lie(J_{\mathfrak{u}}): = \mathfrak{j_u}$ is satisfied.
\end{enumerate}  
\label{alg_lie_groupe_engendre}
\end{lemma}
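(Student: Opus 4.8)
The plan is to establish the two assertions of Lemma~\ref{alg_lie_groupe_engendre} separately, treating unipotence first and the Lie algebra inclusion second.

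\textbf{Unipotence of $J_{\mathfrak{u}}$.} First I would invoke Corollary~\ref{corollaire_LMT} (in the reductive generality provided by the subsequent remark), which places $\mathfrak{u}$ inside $\radu(B)$ for some Borel subgroup $B \subseteq G$. Since $\mathfrak{u} \subseteq \radu(B)$ consists of $p$-nilpotent elements lying in $\Lie(\Rad_U(B))$, and $\Rad_U(B)$ is a parabolic-type subgroup, Lemma~\ref{parab_phi_inf_sat} (or rather its proof, via the dynamic method) applies: each $t$-power map $\phi_x$ for $x \in \mathfrak{u}$ factors through $B$. In fact one gets more — because $\phi$ restricted to $\radu(B)$ has image in $\Rad_U(B)$ (this is the property of $\phi$ fixed at the start of Section~3.2, following \cite{SOB1}), each $\phi_x$ factors through the unipotent group $\Rad_U(B)$. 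Consequently the generating map $\psi_{\mathfrak{u}}$ has image in $\Rad_U(B)$, and since $J_{\mathfrak{u}}$ is the $\fppf$-sheaf subgroup generated by $\psi_{\mathfrak{u}}$, functoriality of this construction (the generated subgroup is contained in any subgroup containing the generators) forces $J_{\mathfrak{u}} \subseteq \Rad_U(B)$, hence $J_{\mathfrak{u}}$ is unipotent.

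\textbf{The inclusion $\mathfrak{u} \subseteq \mathfrak{j_u}$.} Here I would argue that each individual $t$-power map contributes its line to the tangent space. Fix $x \in \mathfrak{u}$ nonzero. The map $\phi_x : \mathbb{G}_a \to J_{\mathfrak{u}}$ factors (by definition of $\phi$-infinitesimal saturation, which we have just verified holds here via the parabolic/dynamic argument applied to $\Rad_U(B)$) through $J_{\mathfrak{u}}$, giving a homomorphism of $k$-groups $\mathbb{G}_a \to J_{\mathfrak{u}}$. Taking $\Lie$ and using that the differential of $\phi$ restricted to $\radu(B)$ is the identity at $0$ (the normalization of $\phi$ imposed at the beginning of Section~3.2), the differential $\diff(\phi_x)_0 : \Lie(\mathbb{G}_a) = k \to \mathfrak{g}$ sends the canonical generator to $x$. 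Hence $x = \diff(\phi_x)_0(1) \in \Lie(J_{\mathfrak{u}}) = \mathfrak{j_u}$. Since this holds for every $x \in \mathfrak{u}$ and $\mathfrak{j_u}$ is a $k$-subspace, we obtain $\mathfrak{u} \subseteq \mathfrak{j_u}$.

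\textbf{Main obstacle.} The delicate point is the computation $\diff(\phi_x)_0(1) = x$: one must be careful that the tangent map of the $t$-power map $t \mapsto \phi(tx)$ is genuinely $x$ and not a scalar multiple. This is exactly why the hypothesis that $\diff\phi\!\mid_{\radu(B)}$ is the identity at $0$ was built into the setup — by \cite{SOB1} and \cite{McNTes} one may always choose $\phi$ with this property, and without it one would only get $\mathfrak{u} \subseteq \mathfrak{j_u}$ up to rescaling, which would still suffice for the stated inclusion but is cleaner to avoid. A secondary subtlety is ensuring that $\phi_x$, defined a priori only as a morphism of reduced schemes $\mathbb{G}_a \to G$, does land in $J_{\mathfrak{u}}$ so that it makes sense to take $\Lie$ into $\mathfrak{j_u}$; this is handled by the factorization through $\Rad_U(B) \supseteq J_{\mathfrak{u}}$ together with the fact that $\phi_x$ is by construction one of the generators of $J_{\mathfrak{u}}$, so its image lies in $J_{\mathfrak{u}}$ by definition of the generated $\fppf$-subgroup.
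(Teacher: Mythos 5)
Your proof is correct and takes essentially the same route as the paper: corollary \ref{corollaire_LMT} together with the $\phi$-infinitesimal saturation of a Borel subgroup (lemma \ref{parab_phi_inf_sat}) and the choice of $\phi$ restricting $\radu(B)\rightarrow \Rad_U(B)$ place $J_{\mathfrak{u}}$ inside the unipotent radical of a Borel, and the normalization $(\diff\phi)_0=\id$ on $\radu(B)$ gives $x=\diff(\phi_x)_0(1)\in\mathfrak{j_u}$ for every $x\in\mathfrak{u}$. The only cosmetic point is that you call $\phi_x$ a homomorphism of $k$-groups, which is neither needed nor true in general: a pointed morphism of schemes $(\mathbb{G}_a,0)\rightarrow (J_{\mathfrak{u}},e)$ suffices to take the differential at $0$, which is all the paper uses.
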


\begin{proof}
The Lie algebra $\mathfrak{u}$ being a restricted $p$-nil $p$-subalgebra of $\mathfrak{g}$ and $p$ being separably good for $G$ (thus not of torsion) Corollary \ref{corollaire_LMT} allows to embed $\mathfrak{u}$ into the Lie algebra of the unipotent radical of a Borel subgroup $B\subset G$. Let us remind the reader of the notation introduced in section \ref{From_Lie_algebras_to_groups_a_natural_candidate}:
\begin{itemize}
\item the Springer isomorphism $\phi$ being fixed, we define
\begin{alignat*}{5}
\psi_{\mathfrak{u}} : W\: &(\mathfrak{u}) \times \: & \mathbb{G}_a\: & \: & \rightarrow \: & G  \\
	   \: & (x, \: & t)\: & \:& \mapsto \: & \phi_x(t),\\
\end{alignat*}
\noindent where $\phi_x(t):= \phi(tx)$ is the $t$-power map.
\item We then denote by $J_{\mathfrak{u}}$ the subgroup of $G$ obtained by considering the $\fppf$-sheaf generated by the image of $\psi_{\mathfrak{u}}$.
\end{itemize} 
\noindent What precedes in particular tells us that $J_{\mathfrak{u}}$ is $k$-embeddable into the unipotent radical of a Borel subgroup. This is because $B$ is $\phi$-infinitesimally saturated according to Lemma \ref{parab_phi_inf_sat}. In other words $J_{\mathfrak{u}}$ is unipotent (see for example \cite[IV, \S2, n\degree2, Proposition 2.5 (vi)]{DG}). 

We still denote by $\phi$ the restriction of the Springer isomorphism to $\radu(B)$. Recall that: 
\begin{itemize}
\item this restriction maps to $\Rad_U(B)$,
\item its differential satisfies $(\diff\phi)_0 = \id$ by assumption.
\end{itemize} 
\noindent The subgroup $J_{\mathfrak{u}}$ being generated by the images of the $t$-power maps $\phi_x$ for all $x\in \mathfrak{u}$, the Lie algebra $\mathfrak{j_u}$ contains the differential at $0$ of all such maps, hence the expected inclusion. 
\end{proof}

It is worth noting that the inclusion $\mathfrak{u}\subseteq \mathfrak{j_u}$ is strict in general, as underlined by the following lemma which is a variation of \cite[VIB, Proposition 7.6]{SGA31}. Notwithstanding this, it will be shown in section \ref{ss_section_intégration_nil} that $J_{\mathfrak{u}}$ does actually integrate $\mathfrak{u}$ when the latter satisfies some maximality hypotheses (see Lemmas \ref{p_nil_integrable} and \ref{p_rad_integrable}).

\begin{proposition}
Let $k$ be a separably closed field and let $(G_i)_{i \in \{1, \cdots, n\}}$ and $G$ be smooth connected $k$-groups. For any $i \in \{1, \cdots, n\}$, consider a smooth morphism of $k$-groups $f_i: G_i \rightarrow G$. Then set: 

\begin{alignat*}{4}
f := \prod_{i=1}^n \: & f_i : \: & (\prod_{i=1}^n G_i) \: & \rightarrow \: & G\:&\\
\: & \: & (g_1, \cdots, g_n) \:& \mapsto \: & f_1\: &(g_1) \cdots f_n(g_n).
\end{alignat*}

\noindent and for any $N \in \mathbb{N}_{>0}$ define: 
\begin{alignat*}{3}
f_N := f \times \cdots \times f : \: & \left(\prod_{i=1}^n G_i \right)^{\times N} \: & \rightarrow \: & G \\
\: & (x_1, \cdots, x_N) \: & \mapsto \: & f(x_1) \cdots f(x_N)
\end{alignat*} given by taking $N$ times the morphism $f$. The following assertions are equivalent:
\begin{enumerate}
\item there exists an integer $N\geq 1$ for which the morphism $f_N$ is surjective and smooth over a non empty open subset of $(\prod_{i=1}^n G_i)^{\times N}$,
\item for $N\geq 1$ large enough the morphism $f_N$ is surjective and separable,
\item the Lie algebra of $G$ decomposes as a $k$-vector space as follows: 
\[\Lie(G) = \sum_{j=1}^n\Ad(h_j)
\left( \Lie(f_j(G_j))\right),\] 
\noindent where $h_j \in M(k)$ for $M = \langle f_i(G_i) \rangle_{i\in \{1, \cdots, n\}}$, the subgroup generated by the $f_i(G_i)$'s,
\item the group $G$ is generated by the images of the $G_j$'s on the big étale site.
\end{enumerate}
\label{etally_generated}
\end{proposition}

\begin{remark} 
If the equivalent conditions of Lemma \ref{etally_generated} are satisfied then in particular the $k$-group $G$ is generated by the images of the $G_j$'s for the $\fppf$-topology. 
\end{remark}

\begin{proof}
We show $\begin{tikzcd}[arrows=Rightarrow, column sep=1.3cm, row sep=1.3cm, every arrow/.append style={shift left=0.8ex}]
  \text{(i)} \arrow {r} 
    \arrow {d}
&\text{(ii)}   
    \arrow {d} 
\\
  \text{(iv)} 
    \arrow {r}
& \text{(iii)}    
\arrow [shift right=1.2ex] {ul}
\end{tikzcd}$. In order to avoid heavy notations we only focus on the case $n=2$ in the statement of the lemma, the general proof follows by induction.
 \begin{itemize}[leftmargin=2cm]
 	\item[(i) $\implies$ (ii)]  Let $N\geq 1$ be an integer such that $f_N : (G_1 \times G_2)^{\times N} \rightarrow G$ is smooth and surjective over a non empty open subset of $(\prod_{i=1}^n G_i)^{\times N}$. Denote by $U$ the image of this open set under $f_N$. It is open in $G$ as $f$ is open. One first needs to obtain the surjectivity on the whole product of $m$ terms (for $m$ large enough). Remark that since we are working with algebraic groups it is enough to consider $f_{2N}$ (thus $m = 2N$) rather than $f_N$ to obtain this property. This is so because the natural morphism $U(k) \cdot U(k) \rightarrow G(k)$ is surjective as $G$ is a $k$-algebraic group). 
 	
Now, as being separated is nothing but being generically smooth, one only needs to show that $f_{2N}$ is smooth on a dense open subset of $G$. It suffices to show that there exists $z \in (G_1\times G_2)^{\times 2N}$ such that $(df_{2N})_z$ is surjective because the source and the target of $f_{2N}$ are smooth varieties (see \cite[I, \S4, Corollaire 4.14]{DG}). The map $f_N$ being smooth over a non empty open subset of $(G_1 \times G_2)^{\times N}$, one can find an element $x \in (G_1 \times G_2)^{\times N}$ such that $(df_N)_x$ is surjective. This implies that so is $(df_{2N})_{(1,x)}$ and allows to conclude that $f_{2N}$ is smooth over an open neighborhood of $(1,x)$. 
 	
 	\item[(ii) $\implies$ (iii)]
Let $N \in \mathbb{N}_{>0}$ be such that the morphism $f_N$ is separable and surjective. These two assumptions together ensure that there exists an element \[h=\left(h_{1,i},h_{2,i}\right)_{i=1}^N \in (G_1(k) \times G_2(k))^{\times N}\] 
\noindent such that $(d f_N)_{h} : T\left((G_1 \times G_2)^N\right)_h \rightarrow T(G)_g$ is surjective for $g=f_N(h)$. Set \linebreak $g_{1,i} = h_{1,1}h_{1,2}\hdots h_{1,i}$ and $g_{2,i} = h_{2,1}h_{2,2}\hdots h_{2,i}$ and consider the map:
\begin{alignat*}{3}
\alpha : \: & (G_1 \times G_2)^{\times N} \: & \rightarrow \: & (G_1 \times G_2)^{\times N}\\
\: & (x_{1,1}, \hdots x_{2,N}) \: & \mapsto \: & (h_{1,1}x_{1,1}, \hdots, h_{2,N}x_{2,N}).
\end{alignat*}
\noindent This allows to translate $f_N$ to the origin as illustrated by the following diagram that can be shown to be commutative:
\[\begin{tikzcd}[arrows=rightarrow, column sep=1.3cm, row sep=1.3cm]
(G_1 \times G_2)^{\times N}   
\arrow{d}{\Ad^N(g_{j,i})}
    \arrow[r, "\alpha"]
&  (G_1 \times G_2)^{\times N}  
    \arrow[r, "f_N"] 
& G    
\arrow{d}{\rho(g^{-1})}
\\
  (G_1 \times G_2)^{\times N}
    \arrow {rr}{f_N''}
&   
& G     
\\  
\end{tikzcd}\]
\noindent where $\Ad^N(g_{j,i}): = \prod_{i=1}^N(\Ad(g_{1,i}), \Ad(g_{2,i}))$ is the diagonal conjugation by the $g_{j,i}$'s for $j \in \{1,2\}$ and $\rho(g^{-1})$ is the right multiplication. This in particular implies that the differential $d \left(f_N'' \circ \Ad^N(g_{j,i})\right)_e : T\left((G_1 \times G_2)^{\times N} \right)_e \rightarrow \Lie(G)=:\mathfrak{g}$ is surjective. Thus any $z \in \mathfrak{g}$ occurs as: 
\begin{alignat*}{3}
d(f_N'' \circ \left(\Ad^N(g_{j,i})\right))_e (x_{1,1},\hdots, x_{2,N}) \: & = \sum_{i=1}^N \left(\Ad(h_{1,i})(x_{1,i}) + \Ad(h_{2,i})(x_{2,i})\right)
\end{alignat*}
\noindent for an element $(x_{1,1},\hdots, x_{2,N}) \in T\left((G_1 \times G_2)^{\times N} \right)_e$, whence the desired equality of vector spaces. 

 	\item[$(iii) \implies (i)$] Let $x \in \mathfrak{g}$, by assumption there exist natural integers $n$ and $m$ such that: 
 	\begin{alignat*}{3}
 	x \: & = \sum_{i=1}^{n}\left(\Ad(h^{1,i})(x_{1,i}) + \Ad(h^{2,i})(x_{2,i})\right)\\
 	\: & = \sum_{i=1}^{n} \left(\Ad\left(\prod_{j=1}^m g^{1,i}_{1,j}g^{1,i}_{2,j}\right)(x_{1,i}) + \Ad\left(\prod_{l=1}^m g^{2,i}_{1,l}g^{2,i}_{2,l} \right) (x_{2,i})\right)
 	\end{alignat*}
\noindent where:
\begin{itemize}
\item[--] for $q \in \{1,2\}$, the $h^{q,i}$'s belong to $\langle f_1(G_1)(k), f_2(G_2)(k) \rangle$ hence decompose into products of $g^{q,i}_{r,j} \in f_r(G_r)(k)$ for $r \in \{1,2\}$ and $j \in \{1,\hdots, m\}$. Note that they may be equal to $1$;
\item[--] the $x_{q,i} = f(z_{q,i})$'s are elements of $\Lie(f_q(G_q))$.
\end{itemize}  

Recall that, as previously noticed (in the proof of the last implication), for any $g \in G$ the tangent space of $G$ at $g$ identifies with the Lie algebra of $G$. Hence the surjectivity of $(df_{N})_e$ (as for any $x \in \mathfrak{g}$, the $N$-tuple $(z_{1,i}, z_{2,i})$ is an antecedent for $(df_{N})_e$).
%
The derived morphism $(df_N)_e$ being surjective and $G$ and $(G_1 \times G_2)^{\times N}$ being smooth, the morphism $f_N$ is smooth over a non empty open subset $U \subset (G_1 \times G_2)^{N}$ (according to \cite[I, \S 4, Corollaire 4.14]{DG}). It remains to show the surjectivity of $f_N$ which is direct the natural morphism $U(k) \cdot U(k) \rightarrow G(k)$ being surjective (because $G$ is an algebraic group).

 	\item[(i) $\implies$ (iv)] For any $k$-algebra $R$ and any $g \in G(R)$ one needs to show that there exists an étale cover $S \rightarrow R$ on which $g$ writes $g_S = g_{1,1} g_{2,1} \cdots g_{1,N}g_{2,N}$, where $g_{1,i} \in f_1(G_1)(S)$ and $g_{2,i} \in f_2(G_2)(S)$ for $i \in \{1, \cdots, N\}$. 
 	This is therefore actually enough to prove the statement when $R$ is strictly henselian. One thus only needs to prove it on the residue field $\kappa$, as Hensel lemma holds true allowing to lift the desired property. The morphism $f_N$ is surjective and smooth over an open cover of $(G_1\times G_2)^{\times N}$ so its image is a dense open subset $U \subset G$, hence the result as $U(\kappa) \cdot U(\kappa) \rightarrow G(\kappa)$ is surjective.
 	
 	\item[(iv) $\implies$ (iii)] By assumption there exists an integer $N \geq 1$ for which the morphism $f_N$ is a covering (see \cite[VIB, Proposition 7.4 and Proposition 7.6]{SGA31}), hence its surjectivity. Any \linebreak $g \in  G(k[\epsilon_1, \epsilon'_1, \hdots, \epsilon_N, \epsilon'_{N}])$ thus writes $g  = \prod_{j=1}^N (h_{1,j} + \epsilon_j x_{1,j})(h_{2,j} + \epsilon'_j x_{2,j})$ for
 	
$h_{i,j} \in f_{i}(G_i)(k)$ with $i \in \{1,2\}$, and $x_{i,j} \in \Lie(f_i(G_i))$. Hence the map \[T\left((f_1(G_1) \times f_2(G_2))^{\times N}\right)_{h}\rightarrow T(G)_{f_N(h)}\] 
\noindent is surjective, for $h=(h_{1,j}, h_{2,j})_{j \in \{1, \cdots, N\}}$. We now run exactly the same reasoning as in the proof (ii) $\implies$ (iii): set $g_{1,i} = h_{1,1}h_{1,2}\hdots h_{1,i}$ and $g_{2,i} = h_{2,1}h_{2,2}\hdots h_{2,i}$. That leads to consider the map:
\begin{alignat*}{3}
\alpha : \: & (G_1 \times G_2)^{\times N} \: & \rightarrow \: & (G_1 \times G_2)^{\times N}\\
\: & (x_{1,1}, \hdots x_{2,N}) \: & \mapsto \: & (h_{1,1}x_{1,1}, \hdots, h_{2,N}x_{2,N}).
\end{alignat*}
\noindent This allows us to translate $f_N$ to the origin, as this can be read on the following diagram (which is commutative):
\[\begin{tikzcd}[arrows=rightarrow, column sep=1.3cm, row sep=1.3cm]
(G_1 \times G_2)^{\times N}   
\arrow{d}{\Ad^N(g_{j,i})}
    \arrow[r, "\alpha"]
&  (G_1 \times G_2)^{\times N}  
    \arrow[r, "f_N"] 
& G    
\arrow{d}{\rho(g^{-1})}
\\
  (G_1 \times G_2)^{\times N}
    \arrow {rr}{f_N''}
&   
& G,    
\\  
\end{tikzcd}\]
\noindent where $\Ad^N(g_{j,i}): = \prod_{i=1}^N(\Ad(g_{1,i}), \Ad(g_{2,i}))$ is the diagonal conjugation by the $g_{j,i}$'s for $j \in \{1,2\}$. This in particular implies that the differential:
\[d \left(f_N'' \circ \Ad^N(g_{j,i})\right)_e : T\left((G_1 \times G_2)^{\times N} \right)_e \rightarrow \Lie(G)=:\mathfrak{g}\] 
\noindent is surjective. Thus any $z \in \mathfrak{g}$ can be rewritten as: 
\begin{alignat*}{3}
d(f_N'' \circ \left(\Ad^N(g_{j,i})\right))_e (x_{1,1},\hdots, x_{2,N}) \: & = \sum_{i=1}^N \left(\Ad(h_{1,i})(x_{1,i}) + \Ad(h_{2,i})(x_{2,i})\right)
\end{alignat*}
\noindent for an element $(x_{1,1},\hdots, x_{2,N}) \in T\left((G_1 \times G_2)^{\times N} \right)_e$, whence the desired equality of vector spaces. 
\end{itemize}
\end{proof}

\begin{remark}
Let $G$ be a reductive $k$-group of finite presentation and let $H \subseteq G$ be the $k$-subgroup of $G$ generated by the $f_i(G_i)$'s as chosen in the above lemma. Note that $H$ is smooth and connected because so are the $G_i$'s. Under some extra assumptions such as:
\begin{enumerate}
\item the smoothness of all normalisers $N_G(\mathfrak{v})$ of all subspaces $\mathfrak{v}$ of $\mathfrak{g}$ (which is ensured under very strict conditions on $p$, as described in \cite[Theorem A]{HS}),
\item the smoothness of $N_G(\langle \Lie(f_i(G_i))\rangle_{i=1}^n)$,
\end{enumerate}
\noindent the third point of the above lemma also allows to conclude that $\Lie(G)$ is generated by the $\Lie(f_i(G_i))$'s as a restricted $p$-Lie algebra. Indeed one only needs to obtain the inclusion \linebreak $H \subseteq N_G(\langle \Lie(f_i(G_i))\rangle)$. The $f_i$'s being morphisms of groups one actually only needs to show that $f_i(G_i) \subset N_G(\langle \Lie(f_i(G_i))\rangle)$. Under the above assumptions the proof is the same as the one in characteristic $0$ (that can be found, for instance, in \cite[II, 7.6]{BORlag}). 

\end{remark}

The above remark provides some examples under which the Lie algebra of $H$ is the restricted $p$-Lie algebra generated by the Lie algebras of the $f_i$'s. The remark below however illustrates the necessity of assumptions made in  Proposition \ref{etally_generated}, by providing examples for which its conclusion does not hold true.

\begin{remark}
Let $G$ be an algebraic group over a separably closed field $k$ of characteristic $p>0$ and let $(f_i: G_i \rightarrow G)_{i=1, \cdots, n}$ be a family of $n$ smooth morphisms of $k$-groups, where the $G_i's$ are assumed to be smooth and connected. Assume that $G$ is generated by the $f_i(G_i)'s$. In general it is not true that $\Lie(G)$ is generated by the $\Lie(f_i(G_i))$'s, as shown on the two examples below:
\begin{enumerate}
\item assume $G = (\mathbb{G}_a)^2$. Set: 
\begin{alignat*}{3}
f_1: G_1:=\: &\mathbb{G}_a \: & \rightarrow \: & G\\
\: & x \: & \mapsto \: & (x,0),
\end{alignat*}
\begin{alignat*}{3}
f_2:G_2:=\: & \mathbb{G}_a \: & \rightarrow \: & G\\
\:& x \: & \mapsto \: & (x, x^p),
\end{alignat*} 
so $G = \langle f_1(G_1), f_2(G_2)\rangle$. Note that $\Lie(f_1(G_1)) = \Lie(f_2(G_2)) = k$, hence: \[\langle \Lie(f_1(G_1)), \Lie(f_2(G_2)) \rangle= k \neq \Lie(G);\]
\item 
the Lie algebra of $[G,G]$ (for $[G,G]$ the derived group of $G$) does not necessarily coincide with the derived Lie algebra $[\mathfrak{g}, \mathfrak{g}]$. For example if $G = \SL_p = [\GL_p, \GL_p]$ then $\ssl_p$ is nothing but the matrices of size $p \times p$ with trace zero, which does not coincide with $[\mathfrak{gl}_p, \mathfrak{gl}_p]$ due to the assumption on the characteristic.
\end{enumerate}
\label{contre-ex_gpes_engendres}
\end{remark}

The notion of $\phi$-infinitesimal saturation introduced here also allows us to extend theorems \cite[Théorème 1.7]{D1} and \cite[Theorem 2.5]{BDP} to $\phi$-infinitesimally reductive $k$-groups $N$ over an algebraically closed field $k$ of characteristic $p>0$ which is assumed to be separably good for $G$. This is the point of Theorem \ref{generalisation_Deligne}. Let us first remark that points (i) and (iii) of \cite[Lemme 2.3]{D1} are still valid in the aforementioned framework and allow us to reduce ourselves to show the result for connected $N$. More precisely:
\begin{lemma}
Let $G$ be a reductive group over an algebraically closed field $k$ of characteristic $p>0$ which is assumed to be separably good for $G$, and let $N \subset G$ be a subgroup of $G$. The following assertions hold true:
\begin{enumerate}
\item if $N$ is $\phi$-infinitesimally saturated in $G$ then so is $N^0$,
\item if the reduced part $N^0_{\red}$ of $N^0$ and its unipotent radical $\Rad_U(N^0_{\red})$ are normal subgroups of $N^0$ then they are normal in $N$.
\end{enumerate}
\label{analogue_2.3}
\end{lemma}

\begin{proof}
See \cite[Lemme 2.3]{D1} for a proof as the notion of $\phi$-infinitesimal saturation is nothing but a generalisation of those of infinitesimal saturation to the framework described above (see Remark \ref{sat_inf_coincide}).
\end{proof}

In what follows the $\phi$-infinitesimally saturated group $N$ is therefore assumed to be connected. In order to state and show the $\phi$-infinitesimal version of P. Deligne's result stated in the introduction of this article (see Theorem \ref{generalisation_Deligne}), one will need a fundamental result on maximal $k$-groups of multiplicative type, which is stated and showed in section \ref{A_preliminary_result} below.

\subsection{A preliminary result on maximal $k$-groups of multiplicative type}
\label{A_preliminary_result}
 \begin{corollary}[(Corollary of {\cite[Proposition A.2.11]{CGP}})]
Let $k$ be a field and $G$ be an affine smooth algebraic $k$-group. The maximal connected subgroups of multiplicative type of $G$ are the maximal tori of $G$.
\label{tm_connexe_max_cas_red}
\end{corollary}

\begin{proof}
Without loss of generality one can assume $G$ to be connected (as any maximal connected subgroup of $G$ is contained in the identity component $G^0$). Let $H \subset G$ be a maximal connected subgroup of multiplicative type. 

Note that, as explained in the proof of \cite[Corollaire 3.3]{BDP}, the connected centraliser of $H$ in $G$, denoted by $Z_G^0(H)$, is a smooth subgroup of $G$. This is an immediate consequence of the smoothness theorem for centralisers (see for example \cite[II, \S5, 2.8]{DG}): the group $G$ being smooth, the set of $H$-fixed points of $G$ (for the $H$-conjugation) is smooth over $k$. 

We proceed by induction on the dimension of $G$, the case of dimension $0$ being trivial.

If now the group $G$ is of strictly positive dimension then:
	\begin{enumerate}
		\item either the inclusion $Z_G^0(H) \subset G$ is strict and then $H$ is a maximal connected subgroup of $Z_G^0(H)$ of multiplicative type, thus $H$ is a $k$-torus (of $Z_G^0(H)$, hence of $G$) by induction;
		\item or $Z_G^0(H)= G$ and $H$ is central in $G$. Then, by \cite[Proposition A.2.11]{CGP} (applied to $G$) one has the following exact sequence:
		\begin{figure}[H]
	\begin{center}
\[\begin{tikzpicture} 

 \matrix (m) [matrix of math nodes,row sep=2em,column sep=1.8em,minimum width=2em,  text height = 1.5ex, text depth = 0.25ex]
  {
    1 & G_{t}& G & V & 1,\\};
  \path[-stealth]
  	(m-1-1) edge (m-1-2)
    (m-1-2) edge (m-1-3) 
   	(m-1-3) edge (m-1-4) 
   	(m-1-4) edge (m-1-5) ;
\end{tikzpicture}\]
\end{center}
\end{figure}
\noindent where $V$ is a unipotent smooth connected group and $G_t$ is the $k$-subgroup of $G$ generated by the $k$-tori of $G$. The subgroup $H \subseteq G$ is maximal and connected of multiplicative type in $G$. It thus fulfils the same conditions in $G_t$. The quotient $G/G_t = U$ is indeed unipotent, thus the subgroup of multiplicative type $H$ intersects $U$ trivially. It is therefore included in $G_t$. If $V \neq 1$ then $H$ is a  $k$-torus by induction. Otherwise one has $G_t = G$ and if $T$ is a $k$-torus of $G$ the subgroup $H\cdot T \subset G$ is connected of multiplicative type and contains $H$ so it is equal to $H$ (as $H$ is assumed to be maximal). Finally one actually has $T \subset H$ hence $G_t \subset H$ so we have shown that $H = G_t$. This in particular implies the smoothness of $H$ which turns out to be a $k$-torus.
\end{enumerate}
	
\end{proof}

\subsection{An infinitesimal version of Theorem \ref{generalisation_Deligne}}
\label{infinitesimal_version}
Let $H\subseteq N \subseteq G$ be a maximal connected subgroup of multiplicative type of the $\phi$-infinitesimally saturated subgroup $N$. The $k$-group $H$ is the direct product of a $k$-torus $T$ together with a diagonalisable $k$-group $D$. The latter is a product of subgroups of the form $\mu_{p^i}$, with $i \in \mathbb{N}$. Moreover the $k$-torus $T$ is nothing but the intersection $H\cap N_{\red}$ and it is a maximal torus of $N$ and $N_{\red}$ (according to Corollary \ref{tm_connexe_max_cas_red}). 

Let $Z:=Z^0_{N_{\red}}(T)$ be the connected centraliser of $T$ for the action of $N_{\red}$ and set $W = Z/T$. This is a unipotent subgroup of $N$, the reasoning is the same as the one of \cite[\S2.5]{D1}): according to \cite[XVII, Proposition 4.3.1 iv)]{SGA32} as the field $k$ is algebraically closed one only needs to show that this quotient has no subgroup of $\mu_p$-type. This is clear: if such a factor would exist its inverse image in $Z$ would be an extension of $\mu_p$ by $T$ in $N_{\red}$, hence of multiplicative type. This is absurd as the maximal connected subgroups of multiplicative type of a smooth algebraic group over a field are the maximal tori (by Corollary \ref{tm_connexe_max_cas_red}). Moreover 
\begin{itemize}
\item the groups $T$ and $Z$ being smooth so is $W$ according to \cite[II,\S5, n\degree 5 Proposition 5.3 (ii)]{DG}; 
\item the group $W$ is also unipotent according to what precedes. 
\end{itemize}
\noindent The field $k$ being perfect \cite[exposé XVII, Théorème 6.1.1]{SGA32} holds true and implies the exactness of the following exact sequence:
\begin{figure}[H]
\begin{center}
\[\begin{tikzpicture} 

 \matrix (m) [matrix of math nodes,row sep=2em,column sep=1.8em,minimum width=2em,  text height = 1.5ex, text depth = 0.25ex]
  {
    1 & T & Z & W & 1.\\};
  \path[-stealth]
  	(m-1-1) edge (m-1-2)
    (m-1-2) edge (m-1-3) 
    (m-1-3) edge (m-1-4)
    (m-1-4) edge (m-1-5) 
    (m-1-4) edge[bend left = -50] (m-1-3) ;
\end{tikzpicture}\]
\end{center}
\end{figure}
\noindent To summarise, we have an isomorphism $Z_{N_{\red}}(T)\cong T \times W$. Let $X$ be the reduced $k$-subscheme of $p$-nilpotent elements of $\mathfrak{n}^0 = W(\mathfrak{n}^0) = \Lie(Z_{N_{\red}}(T))$.

\begin{lemma}
The centraliser $Z^0_{N_{\red}}(T)$ is the subgroup of $N$ generated by $T$ and the morphism:
\begin{alignat*}{4}
\psi_X : \: & X \: & \times \: & \mathbb{G}_a \: & \rightarrow \: & G\\
\:& (x,\:& \: & t) \: & \mapsto \: & \phi(tx).
\end{alignat*}
\noindent It is normalised by $H$.
\label{action_H_sur_centralisateur}
\end{lemma}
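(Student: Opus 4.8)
The plan is to first identify $Z := Z^0_{N_{\red}}(T)$ with the subgroup generated (as an $\fppf$-sheaf) by $T$ and the image of $\psi_X$, then to deduce the normalisation statement from the $G$-equivariance of $\phi$ exactly as in Lemma \ref{inclusion_normalisateurs_J_u_et_u}. Write $Z'$ for the subgroup of $N$ generated by $T$ and $\im(\psi_X)$. For the inclusion $Z' \subseteq Z$: the torus $T$ clearly lies in $Z_{N_{\red}}(T)$, and it is connected; and for each $p$-nilpotent $x \in \mathfrak{n}^0 = \Lie(Z)$ the $t$-power map $\phi_x$ factors through $Z$ because $Z$, being the centraliser of a torus in the reductive-or-at-least-smooth group $N_{\red}$, is itself $\phi$-infinitesimally saturated — this uses that $N$ is $\phi$-infinitesimally saturated (hence so is $N^0$ and one reduces, as in the dynamic-method argument of Lemma \ref{parab_phi_inf_sat}, to centralisers of cocharacters $Z_G(\lambda) = Z_G(\lambda)$, for which $G$-equivariance of $\phi$ gives $\lambda(s)\cdot\phi(tx) = \phi(\lambda(s)\cdot tx)$ and the right-hand limit exists since $x$ is fixed by $\lambda$). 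Since $Z'$ is generated inside $N$ by subsheaves of $Z$ and $Z$ is a subgroup, $Z' \subseteq Z$.

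For the reverse inclusion $Z \subseteq Z'$: by the decomposition $Z \cong T \times W$ established just above the statement, with $W$ smooth connected unipotent, it suffices to show $W \subseteq Z'$, equivalently that the smooth connected unipotent group $J := J_{W(X)}$ generated by $\im(\psi_X)$ surjects onto $W$. One has $\Lie(J) \supseteq W(X) = X$ by the argument of Lemma \ref{alg_lie_groupe_engendre} (the differential at $0$ of each $\phi_x$ is $x$, since $(\diff\phi)_0 = \id$ on $\radu(B)$ after embedding into a Borel). On the other hand $X$ is by definition the full reduced scheme of $p$-nilpotent elements of $\mathfrak{n}^0 = \Lie(Z) = \mathfrak{t} \oplus \Lie(W)$; since $\mathfrak{t}$ is toral it contains no $p$-nilpotent element, so the $p$-nilpotent locus of $\Lie(Z)$ coincides with the $p$-nilpotent locus of $\Lie(W)$, which for the unipotent group $W$ spans $\Lie(W)$ (here we use that over a field of separably good characteristic every element of the Lie algebra of a unipotent group is $p$-nilpotent, so $\mathcal{N}_{\red}(\Lie W) = \Lie W$ as schemes). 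Hence $\Lie(J)$ contains $\Lie(W)$; as $J \subseteq Z$ projects into $W$ and both are smooth connected, a dimension count via $\Lie$ forces $J = W$, giving $W \subseteq Z'$ and therefore $Z = Z'$.

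Finally, $H$ normalises $Z'$: $H$ normalises $T$ since $T = H \cap N_{\red}$ is characteristic in $H$ (it is the maximal torus), and $H$ normalises $\im(\psi_X)$ because $\phi$ is $G$-equivariant, so for $h \in H(R)$ and $(x,t)$ one gets $\Ad(h)\,\psi_X(x,t) = \psi_X(\Ad(h)x, t)$, and $\Ad(h)$ preserves $X$ since $H \subseteq N$ normalises $\mathfrak{n}^0$ and preserves $p$-nilpotency; the same $\fppf$-sheaf-generation bookkeeping as in Lemma \ref{inclusion_normalisateurs_J_u_et_u} then shows $\Ad(h)$ maps the generated subgroup $Z'$ into itself, and applying this to $h$ and $h^{-1}$ gives that $H$ normalises $Z' = Z$.

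**Main obstacle.** The delicate point is the reverse inclusion $Z \subseteq Z'$, i.e. checking that the unipotent part $W$ is actually \emph{generated} by the $t$-power maps of $p$-nilpotent elements of $\mathfrak{n}^0$ and not merely contained in a larger unipotent group — this is where one must know both that $\Lie(J)$ captures all of $\Lie(W)$ (using $X$ = full $p$-nilpotent locus and $\mathcal{N}_{\red}(\Lie W) = \Lie W$) and that $J \subseteq W$ is forced, which relies on $\phi$-infinitesimal saturation of $Z$ to keep $J$ inside $Z$ in the first place.
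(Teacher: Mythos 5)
Your proposal is correct and follows essentially the same route as the paper: use $\phi$-infinitesimal saturation of $N$, reducedness/connectedness of the generated subgroup and $G$-equivariance of $\phi$ to land inside $Z^0_{N_{\red}}(T)$, then use $(\diff\phi)_0=\id$ on $\radu(B)$ to get $\mathfrak{w}\subseteq\Lie(J)$ and smooth-connectedness to lift the Lie-algebra comparison to the groups, and finally the same $\fppf$-generation plus equivariance bookkeeping for the $H$-action. Your only deviations are cosmetic (you prove $J=W$ directly, where the paper compares $E_{T,J^0_X}$ with $Z^0_{N_{\red}}(T)$, and your detour through the dynamic method for the first inclusion is unnecessary since equivariance plus $T$-fixedness of $x$ already gives that the image of $\psi_X$ centralises $T$).
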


\begin{proof}
Let $J_X \subseteq G$ be the subgroup of $N$ generated by the image of $\psi_X$. The subgroup $N$ being $\phi$-infinitesimally saturated, the $t$-power map induced by $\phi$ maps any $p$-nilpotent element of $\Lie(Z_{N}(T))$ to $N$. Thus $\psi_X$ factorises through $N$. Moreover as $J_X$ is the image of a reduced $k$-scheme it is reduced hence smooth (as $k$ is algebraically closed). Thus the inclusion $J_X \subset N_{\red}$ holds true. Finally, $\phi$ is $G$-equivariant because it is a Springer isomorphism. This implies that the image of $\psi_X$ commutes with any element of $T$. We just have shown that $J_X \subset Z_{N_{\red}}(T)$.
 
Let also $E_{T,J^0_X}$ be the subgroup generated by $T$ and $J^0_X$ as a $\fppf$-sheaf. It is: 
\begin{itemize}
	\item a smooth subgroup of $Z_{N_\red}(T)$ as $T \in Z^0_{N_{\red}}(T)$ is smooth and $J^0_X \subset Z^0_{N_{\red}}(T)$ by what precedes, 
	\item connected according to \cite[VIB. Corollaire 7.2.1]{SGA31}, the torus $T$ being geometrically connected and geometrically reduced. 
\end{itemize}
\noindent Thus $E_{T,J^0_X}$ is actually contained in the identity component of the reduced centraliser. At the Lie algebras level this leads to the following inclusion: 
\[\Lie(E_{T,J^0_X})  \subseteq \Lie(Z^0_{N_{\red}}(T))= \Lie(Z_{N_{\red}}(T)).\]

As $W$ is a unipotent subgroup of $Z^0_{N_{\red}}(T) \cong T \times W$ the Lie algebra $\mathfrak{w} : = \Lie(W)$ is a restricted $p$-nil $p$-subalgebra of $Z^0_{N_{\red}}(T)$, hence is contained in the reduced sub-scheme $X$. The latter is the set of $p$-nilpotent elements of $\Lie(Z_N(T))$ so it is contained in the set of all $p$-nilpotent elements of $\mathfrak{g}$. This set coincides with $\rad_p(\mathfrak{g})$ by Lemma \ref{p_rad_gpe_lisse_connexe}, which holds true as either $p\geq 3$ or if $p=2$ the conditions defined in Remark \ref{remark_autorise_car2} ii) are satisfied. As $p$ is not of torsion for $G$, Corollary \ref{corollaire_LMT} holds true and allows to embed $\rad_p(\mathfrak{g})$, thus $X$, into the Lie algebra of the unipotent radical of a Borel subgroup $B\subseteq G$. Remember that the differential at $0$ of the restriction of $\phi$ to this subalgebra satisfies $(\diff\phi)_0 = \id$. The group $J_X$ being generated by the image of $\psi_X$, this property ensures that the differential at $0$ of any $\phi(tx)$, for any $t \in \mathbb{G}_a$ and $x \in X$, belongs to $\mathfrak{j}_X:=\Lie(J_X)=\Lie(J_X)^0$. In other words one has the following inclusions $\mathfrak{w}\subseteq X \subseteq \Lie(J_X^0)$. Moreover the inclusion $T\subseteq E_{T,J^0_X}$ induces an inclusion of Lie algebras $\mathfrak{t} := \Lie(T) \subseteq \Lie(E_{T,J^0_X})= \Lie(E_{T,J^0_X})$.

As one has $Z_{N_{\red}}(T) \cong T \times W$, what precedes leads to the following inclusion: \[\Lie(Z^0_{N_{\red}}(T)) = \Lie(Z_{N_{\red}}(T)) \subseteq \Lie(E_{T,J^0_X}),\] 
thus to the equality $\Lie(Z^0_{N_{\red}}(T)) = \Lie(E_{T,J^0_X})$. As the groups involved here are smooth and connected this equality of Lie algebras lifts to the group level according to \cite[II, \S 5 n\degree 5.5]{DG}, whence the equality $Z^0_{N_{\red}}(T) = E_{T,J^0_X}$.

It then remains to show that $E_{T,J^0_X}$ is normalised by $H$. Recall that it is the subgroup generated by $T\subseteq H$ (which is normal in $H$) and $J^0_X$ (which is characteristic in $J_X$, see \cite[II, \S 5, n\degree 1.1]{DG}). Hence one only needs to show that $J_X$ is $H$-stable. First remark that $X$ is stabilised by $H$ because the latter stabilises $\Lie(Z_{N}(T))$ and the $p$-nilpotency is preserved by the adjoint action. The $G$-equivariance of $\phi$ (thus its $H$-equivariance) then allows to conclude: let $R$ be a $k$-algebra. For any $j \in J_X(R)$ and $h \in H(R)$ there is an $\fppf$-covering $S \rightarrow R$ such that $j_S = \psi_{X}(x_1,t_1) \cdots \psi_{X}(x_n,t_n)$ where $x_i \in X_R \otimes_R S $ and $s_i \in S$. But then one has:
	\[(\Ad(h)j)_S = \prod_{i=1}^n \Ad(h_S)\psi_{X}(x_i,s_i) = \prod_{i=1}^n \psi_{X}\left(\Ad(h_S)x_i,s_i\right) \in J_{X}(S)\cap G(R) = J_{X}(R),\]  
\noindent and by Yoneda Lemma $E_{T,J^0_X}$ is stable under the $H$-action. 
\end{proof}

\begin{lemma}
The restricted $p$-Lie algebra $\mathfrak{n}_{\red} := \Lie(N_{\red})$ is an ideal of $\mathfrak{n}$ acted on by $H$.
\label{Lie_N_red_stable_par_H}
\end{lemma}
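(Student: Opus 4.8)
The plan is to prove the two assertions separately: first that $\mathfrak{n}_{\red}$ is an ideal of $\mathfrak{n}$, and then that $H$ acts on it. For the ideal statement, I would start from the fact established in Lemma \ref{analogue_2.3} (and the running hypothesis that $N$ is connected) that $N_{\red} = N^0_{\red}$ is a normal subgroup of $N$. Normality of the closed subgroup $N_{\red} \trianglelefteq N$ means that the conjugation action of $N$ on itself restricts to an action on $N_{\red}$; differentiating this action gives that the adjoint action $\Ad_N$ of $N$ on $\mathfrak{n} = \Lie(N)$ preserves the subspace $\mathfrak{n}_{\red} = \Lie(N_{\red})$. In particular, restricting to the infinitesimal level, $\ad(\mathfrak{n})$ stabilises $\mathfrak{n}_{\red}$, i.e. $[\mathfrak{n}, \mathfrak{n}_{\red}] \subseteq \mathfrak{n}_{\red}$, which is exactly the statement that $\mathfrak{n}_{\red}$ is an ideal of $\mathfrak{n}$. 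One must be slightly careful here since $N$ need not be smooth; but $\Lie$ of a normal closed subgroup scheme is still an ideal of the ambient Lie algebra, because the normality $N \times N_{\red} \to N_{\red}$, $(g,h) \mapsto ghg^{-1}$, functorially induces on $\Lie$-points (applied to the dual numbers) the bracket relation $[\mathfrak{n}, \mathfrak{n}_{\red}] \subseteq \mathfrak{n}_{\red}$, using the standard description of the adjoint representation via $k[\varepsilon]$-points.

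For the $H$-action, the point is that $H$ is a subgroup of $N$ (indeed $H \subseteq N^0$, $H$ being a maximal connected subgroup of multiplicative type as in subsection \ref{infinitesimal_version}). Since $N_{\red}$ is normal in $N$, $H$ normalises $N_{\red}$ by restriction of the conjugation action, and hence $H$ acts on $N_{\red}$ by conjugation. Differentiating, $H$ acts on $\mathfrak{n}_{\red} = \Lie(N_{\red})$ via the restriction of $\Ad_N$ to $H$; this is the asserted $H$-action, and it is compatible with the bracket and with the $p$-operation since $\Ad$ is always a morphism of restricted Lie algebras. Concretely, $H$ stabilises $\mathfrak{n}_{\red}$ inside $\mathfrak{n}$ because it stabilises $N_{\red}$ inside $N$.

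I do not expect a serious obstacle here: the statement is essentially a formal consequence of Lemma \ref{analogue_2.3}(ii) (normality of $N_{\red}$, hence of $N^0_{\red}$, in $N$) together with the elementary fact that $\Lie$ sends normal subgroup inclusions to ideal inclusions and is functorial for conjugation actions. The only mild subtlety worth spelling out is the non-smoothness of $N$: one should invoke the description of $\Ad$ on $k[\varepsilon]$-points rather than any argument that presupposes $N$ reduced, and note that $N_{\red}$ being a genuine (smooth) closed subgroup scheme with $\Lie(N_{\red})$ a well-defined $k$-subspace of $\mathfrak{n}$ makes all of this legitimate. Everything else —  the bracket identity $[\mathfrak{n},\mathfrak{n}_{\red}]\subseteq\mathfrak{n}_{\red}$ and the $H$-equivariance — then follows by functoriality, exactly as in \cite[Lemme 2.3]{D1}.
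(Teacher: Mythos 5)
There is a genuine gap, and it is fatal: your argument is circular. You take as input that $N_{\red}$ is a normal subgroup of $N$, attributing this to Lemma \ref{analogue_2.3}. But Lemma \ref{analogue_2.3}(2) is only a conditional statement: \emph{if} $N^0_{\red}$ and $\Rad_U(N^0_{\red})$ are normal in $N^0$, \emph{then} they are normal in $N$; its role is merely to reduce the proof of Theorem \ref{generalisation_Deligne} to the case of connected $N$. The normality of $N_{\red}$ in $N$ is precisely assertion (1) of Theorem \ref{generalisation_Deligne}, and Lemma \ref{Lie_N_red_stable_par_H} is an \emph{infinitesimal step towards} that assertion (via Proposition \ref{generalisation_infinitesimale_Deligne} and the group-level argument in the proof of the theorem), not a consequence of it. Recall that for a non-smooth group scheme over a field of characteristic $p$ the reduced part is in general \emph{not} normal; making it normal is exactly what the $\phi$-infinitesimal saturation hypothesis buys, and your proposal never uses that hypothesis anywhere — a clear sign that the intended content of the lemma has been bypassed rather than proved. (Your second paragraph inherits the same problem: the $H$-stability of $\mathfrak{n}_{\red}$ is deduced from the assumed normality of $N_{\red}$ in $N$.)

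The paper's proof goes the other way around. One first observes, via the faithful flatness of $N_{\red}\times H \rightarrow N$ (proof of \cite[lemma 2.14]{BDP}), that $N$ is generated as an $\fppf$-sheaf by $N_{\red}$ and $H$, so that it suffices to prove $H$-stability of $\mathfrak{n}_{\red}$ inside $\mathfrak{n}$. One then decomposes $\mathfrak{n}$ and $\mathfrak{n}_{\red}$ into weight spaces for the maximal torus $T = H\cap N_{\red}$: for each nonzero weight $\alpha$ the space $\mathfrak{n}^{\alpha}$ consists of $p$-nilpotent elements, and since the $p$-nilpotent elements of $\mathfrak{n}$ already lie in $\mathfrak{n}_{\red}$ (because $\Lie(H)$ is toral), one gets $\mathfrak{n}_{\red}^{\alpha}=\mathfrak{n}^{\alpha}$, which is visibly $H$-stable; the zero-weight part $\Lie(Z_{N_{\red}}(T))$ is $H$-stable by Lemma \ref{action_H_sur_centralisateur}, whose proof uses the $\phi$-infinitesimal saturation of $N$ and the subgroup $J_X$ generated through the Springer isomorphism in an essential way. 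Only after the $H$-stability (hence $N$-stability) of $\mathfrak{n}_{\red}$ is in hand does one conclude, by an argument on $R[\epsilon]$-points, that $\mathfrak{n}_{\red}$ is an ideal of $\mathfrak{n}$. If you want to repair your proposal, you must replace the appeal to normality of $N_{\red}$ in $N$ by this generation-plus-weight-space argument, or find another route that genuinely exploits $\phi$-infinitesimal saturation.
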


\begin{proof}
According to the proof of \cite[Lemma 2.14]{BDP} the morphism of $k$-schemes $N_{\red}\times H \rightarrow N$ is faithfully flat. This being said $N$ appears as the $\fppf$-sheaf generated by $N_{\red}$ and $H$. Thus in order to show that $\mathfrak{n}_{\red}$ is actually $N$-stable one only needs to show that $\mathfrak{n}_{\red}$ is $H$-stable . The torus $T = H \cap N_{\red}$ acts on $N_{\red}$, respectively on $N$, leading to the following decompositions:
\[\mathfrak{n_{\red}} = \Lie(N_{\red}) = \Lie(Z_{N_{\red}}(T)) \oplus \bigoplus_{\alpha \in X(T)^*} \mathfrak{n}_{\red}^{\alpha},\]
\[\text{and } \Lie(N) = \Lie(Z_N(T)) \oplus \bigoplus_{\alpha \in X(T)^*} \mathfrak{n}^{\alpha},\]
\noindent where $X(T)^*$ stands for the group of non trivial characters of $T$. Any factor in the decomposition of $\Lie(N)$ is stable for $H$ as $T$ is normal in $H$ and we need to show that so is any factor of the decomposition of $\mathfrak{n}_{\red}$. Let us first study the positive weight spaces. The group $N$ being generated as a $\fppf$-sheaf by $N_{\red}$ and the subgroup of multiplicative type $H$ (whose Lie algebra is toral) the $p$-nilpotent elements of $\Lie(N)$ are the $p$-nilpotent elements of $\Lie(N_{\red})$. This being observed, as for any $\alpha \neq 0$ the weight space $\mathfrak{n}^{\alpha}$ has only $p$-nilpotent elements (because we consider the action of a torus here) the equality $\mathfrak{n}_{\red}^{\alpha}:= \mathfrak{n}^{\alpha} \cap \mathfrak{n}_{\red} = \mathfrak{n}^{\alpha}$ is satisfied, whence the desired $H$-stability. 

It remains to show that $\Lie(Z_{N_{\red}}(T))$ is $H$-stable. According to Lemma \ref{action_H_sur_centralisateur} the subgroup $H$ normalises $Z_{N_{\red}}(T)^0$, thus the stability of $\Lie(Z_{N_{\red}}(T)^0) = \Lie(Z_{N_{\red}}(T))$. 

According to what precedes $\mathfrak{n}_{\red}$ is stable for the action of $H$ on $\mathfrak{n}$, hence this subalgebra is invariant for the action of $N$. Reasoning  on the $R[\epsilon]$-points for any $k$-algebra $R$, one can shows that $\mathfrak{n_{\red}}$ is an ideal of $\mathfrak{n}$.
\end{proof}

The proof of the following lemma is the same as the proof of \cite[Lemma 2.22]{D1} because relaxing the hypotheses had no consequences on the involved arguments. We reproduce the proof here to ensure a consistency in notations.

\begin{lemma}[(P. Deligne, {\cite[2.22]{D1}})]
Let $V$ be the unipotent radical of $N_{\red}$. The action of $H$ on $\Lie(N_{\red}) = \mathfrak{n}_{\red}$ leaves $\Lie(V) := \mathfrak{v}$ invariant.
\label{Lie_rad_unip-stable_par_H}
\end{lemma}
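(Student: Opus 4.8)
The plan is to exploit the canonical characterisation of the unipotent radical $V$ of $N_{\red}$ and show that the $H$-action must preserve it. Since $k$ is perfect and $N_{\red}$ is a smooth connected $k$-group, $V = \Rad_U(N_{\red})$ is a characteristic subgroup of $N_{\red}$: it is the (unique) largest smooth connected normal unipotent subgroup of $N_{\red}$, and this intrinsic description is stable under any automorphism of $N_{\red}$. First I would recall, as established just above in the discussion preceding Lemma \ref{Lie_rad_unip-stable_par_H}, that $N$ is generated as an $\fppf$-sheaf by $N_{\red}$ and $H$, that $H$ normalises $N_{\red}$, and that conjugation by $H$ therefore induces automorphisms of the algebraic group $N_{\red}$.

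Next I would argue that these automorphisms fix $V$ setwise. For each $k$-algebra $R$ and each $h \in H(R)$, the inner automorphism $\mathrm{int}(h)$ of $N_{\red,R}$ sends the smooth connected normal unipotent subgroup $V_R$ to another subgroup with the same three properties; by uniqueness of the unipotent radical (over the perfect field $k$, and after base change, using that $V$ commutes with flat base change and that $\Rad_U$ is geometric here) one gets $\mathrm{int}(h)(V_R) = V_R$. Hence $H$ normalises $V$ inside $N$. Passing to Lie algebras, $\Ad(H)$ preserves $\mathfrak{v} = \Lie(V)$, which is exactly the assertion. Concretely, one differentiates the $H$-action on $V$: since $V$ is $H$-stable and smooth, $\Lie(V)$ is an $H$-submodule of $\Lie(N_{\red})$, and by Lemma \ref{Lie_N_red_stable_par_H} the latter is itself an $H$-stable ideal of $\mathfrak{n}$, so everything takes place inside $\mathfrak{n}$ as required.

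Alternatively, and perhaps more in the spirit of Deligne's original proof, I would run the argument at the level of the weight-space decomposition already set up in the proof of Lemma \ref{Lie_N_red_stable_par_H}. Decompose $\mathfrak{n}_{\red}$ under $T = H \cap N_{\red}$ as $\Lie(Z_{N_{\red}}(T)) \oplus \bigoplus_{\alpha \neq 0} \mathfrak{n}_{\red}^{\alpha}$. The subalgebra $\mathfrak{v}$ is $T$-stable (as $T \subseteq N_{\red}$ normalises $V$), so it decomposes compatibly; each nonzero weight space of $\mathfrak{v}$ equals the corresponding weight space of $\mathfrak{n}_{\red}$ by the same $p$-nilpotency argument used before (nonzero $T$-weight spaces consist of $p$-nilpotent elements, and those already lie in $\mathfrak{v}$ when one is inside a Borel unipotent radical via Corollary \ref{corollaire_LMT}), hence are $H$-stable because the full weight spaces are. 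One is then reduced to the zero-weight part $\mathfrak{v} \cap \Lie(Z_{N_{\red}}(T))$, and Lemma \ref{action_H_sur_centralisateur} gives that $H$ normalises $Z^0_{N_{\red}}(T)$; combining this with the structure $Z_{N_{\red}}(T) \cong T \times W$, where $W$ is unipotent, identifies the zero-weight part of $\mathfrak{v}$ with $\mathfrak{w} = \Lie(W)$, which is $H$-stable.

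The main obstacle I anticipate is not the geometry of $V$ but the bookkeeping around base change and the precise sense in which "$N$ is generated by $N_{\red}$ and $H$" lets one promote $H$-stability to $N$-stability — this is the step where Deligne's faithful-flatness input (the morphism $N_{\red} \times H \to N$ is faithfully flat, cited from the proof of \cite[lemma 2.14]{BDP}) is essential, together with a descent argument on $R[\epsilon]$-points to see that $H$-invariance of $\mathfrak{v}$ inside the $H$-stable ideal $\mathfrak{n}_{\red}$ forces $N$-invariance. The other delicate point is making sure the characteristic-subgroup description of $\Rad_U$ survives passage to the non-reduced ambient group $N$; but since we only ever transport $V$ by automorphisms of the honest smooth group $N_{\red}$, this is harmless. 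I would present the proof via the characteristic-subgroup route as the clean argument and remark that the weight-space route recovers Deligne's.
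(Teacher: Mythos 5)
There is a genuine gap, in fact in both of your routes. Your main route starts from the premise that ``$H$ normalises $N_{\red}$'' and hence acts by automorphisms on the honest smooth group $N_{\red}$, so that the characteristic subgroup $V=\Rad_U(N_{\red})$ is carried to itself. But this group-level normality is \emph{not} established before this lemma: what precedes (lemma \ref{Lie_N_red_stable_par_H}) is only the infinitesimal statement that $\Ad(H)$ preserves $\mathfrak{n}_{\red}$ inside $\mathfrak{n}$. The normality of $N^0_{\red}$ (and of $V$) in $N$ is precisely part 1 of theorem \ref{generalisation_Deligne}; it is proved afterwards, it genuinely uses the $\phi$-infinitesimal saturation hypothesis (for a non-reduced group scheme the reduced part need not be normal --- think of $\alpha_p\rtimes\mathbb{G}_m$, whose reduced part is not normal), and the step of that proof dealing with $V$ explicitly invokes the stability properties established in the proof of the present lemma. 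So deducing the lemma from ``$H$ normalises $N_{\red}$'' is circular in the paper's logical order, and the difficulty you flag at the end (promoting $H$-stability to $N$-stability via faithful flatness) belongs to proposition \ref{generalisation_infinitesimale_Deligne}, not to this lemma, whose whole content is the $H$-stability itself.

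Your fallback weight-space route is closer to the paper's argument (and handles the zero-weight part exactly as the paper does: $\mathfrak{v}^0=\mathfrak{w}$, which is $H$-stable by lemma \ref{action_H_sur_centralisateur}), but it breaks down at the key point. You claim that for every $\alpha\neq 0$ one has $\mathfrak{v}^{\alpha}=\mathfrak{n}_{\red}^{\alpha}$ because nonzero weight spaces consist of $p$-nilpotent elements which ``already lie in $\mathfrak{v}$''. That is false in general: $p$-nilpotent elements of $\mathfrak{n}_{\red}$ need not lie in $\Lie(\Rad_U(N_{\red}))$ --- the root spaces of the reductive quotient $\mathfrak{q}=\Lie(N_{\red}/V)$ are nonzero $T$-weight spaces of $\mathfrak{n}_{\red}$ that survive in the quotient. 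The $p$-nilpotency argument of lemma \ref{Lie_N_red_stable_par_H} only shows $\mathfrak{n}^{\alpha}\subseteq\mathfrak{n}_{\red}$, not $\mathfrak{n}_{\red}^{\alpha}\subseteq\mathfrak{v}$. The equality $\mathfrak{v}^{\alpha}=\mathfrak{n}_{\red}^{\alpha}$ holds only when $\alpha$ is not a weight of $T$ on $\mathfrak{q}$; in the remaining case the paper (following Deligne) uses that $\mathfrak{q}^{\alpha},\mathfrak{q}^{-\alpha}$ are one-dimensional and that, since $p>2$, the bracket induces a non-degenerate pairing $\mathfrak{q}^{\alpha}\times\mathfrak{q}^{-\alpha}\rightarrow\mathfrak{q}^0$, which realises $\mathfrak{v}^{\pm\alpha}$ as the kernel of the $H$-equivariant map $\mathfrak{n}_{\red}^{\pm\alpha}\rightarrow\Hom(\mathfrak{n}_{\red}^{\mp\alpha},d)$, whence its $H$-stability. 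This duality step is the heart of the proof and is missing from your proposal.
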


\begin{proof}
The torus $T$ acts on $\mathfrak{n}_{\red}$ thus on $\mathfrak{v}$. The Lie algebras $\mathfrak{n}_{\red}$ and $\mathfrak{v}$ have a weight space decomposition for this action, namely $\mathfrak{n}_{\red} =\mathfrak{n}_{\red}^0 \oplus \bigoplus_{\alpha \in X(T)^*} \mathfrak{n}_{\red}^{\alpha}$ and $\mathfrak{v} = \mathfrak{v}^0 \oplus \bigoplus_{\alpha \in X(T)^*} \mathfrak{v}^{\alpha}$. According to the proof of Lemma \ref{Lie_N_red_stable_par_H} the decomposition of $\mathfrak{n}_{\red}$ is $H$-stable. It remains to show that so is any $\mathfrak{v}^{\alpha}$. 

Consider the following commutative diagram. As a reminder as $Z = T \times W$ and $T$ and $Z$ are normal in $N_{\red}$, so is the subgroup $W\subseteq N_{\red}$. Moreover $W$ is also unipotent smooth and connected so it is contained in $V:= \Rad_U(N_{\red})$:

\begin{figure}[H]
\begin{center}
\[\begin{tikzpicture} 

 \matrix (m) [matrix of math nodes,row sep=2em,column sep=1.8em,minimum width=2em,  text height = 1.5ex, text depth = 0.25ex]
  {
    1 & W & Z & T_Q,\\
    1 & V & N_{\red} & Q. \\};
  \path[-stealth]
  	(m-1-1) edge (m-1-2)
    (m-1-2) edge (m-1-3) 
    (m-1-3) edge (m-1-4) 
    (m-2-1) edge (m-2-2) 
    (m-2-2) edge (m-2-3) 
    (m-2-3) edge (m-2-4)
    (m-1-2) edge[right hook->] (m-2-2) 
    (m-1-3) edge[right hook->] (m-2-3)
    (m-1-4) edge[right hook->] (m-2-4) ;
    	
\end{tikzpicture}\]
\end{center}
\end{figure}
\noindent Let us first study the $H$-stability of the weight-zero part of $\mathfrak{v}$. The diagram above being cartesian one has $\mathfrak{v}^0 = \mathfrak{n}_{\red}^0 \cap \mathfrak{v} = \mathfrak{z} \cap \mathfrak{v} = \mathfrak{w}$. But $\mathfrak{w}$ is $H$-stable as the subgroups $T$ and $Z$ are (for $Z$ this has been shown in Lemma \ref{action_H_sur_centralisateur}) and the sequence is split.

Let us now focus on the positive weights. Let $\mathfrak{q}$ be the Lie algebra of the reductive quotient $N_{\red}/V$. The torus $T$ acts on this Lie algebra which writes $\mathfrak{q} = \mathfrak{q}^0 \oplus  \bigoplus_{\alpha \in X^*(T)} \mathfrak{q}^{\alpha}$. There are two possible situations: 
	\begin{itemize}
		\item either $\alpha$ is not a weight of $T$ on $\mathfrak{q}$. Then one has $\mathfrak{v}^{\alpha} = \mathfrak{n}_{\red}^{\alpha}$, whence the $H$-stability of $\mathfrak{v}^{\alpha}$;
		\item or $\alpha$ is a non trivial weight of $T$ on $\mathfrak{q}$. Then the weight spaces $\mathfrak{q}^{\alpha}$ and $\mathfrak{q}^{-\alpha}$ are of dimension $1$ (according to \cite[XIX, Proposition 1.12 (iii)]{SGA33}). As $p>2$ (because it is separably good for $G$), the pairing: \begin{alignat*}{3}
		\mathfrak{q}^{\alpha} \times \mathfrak{q}^{-\alpha} \:& \rightarrow \:& \mathfrak{q}^0 := \Lie(\:& T_Q) \\
		(X_{\alpha},X_{-\alpha}) \:& \mapsto \:& [X_{\alpha}, X_{-\alpha}]\:&
		\end{alignat*}		

induced by the bracket on $\mathfrak{q}$ is non-degenerate (see \cite[XXIII, Corollaire 6.5]{SGA33}), thus maps to a $1$-dimensional subspace $h_{\alpha}$.

Likewise, the bracket on $\mathfrak{n}_{\red}$ induces a non-degenerate pairing of $\mathfrak{n}_{\red}^{\alpha}$ and $\mathfrak{n}_{\red}^{-\alpha}$, and one has the following commutative diagram: 
\begin{figure}[H]
\begin{center}
\[\begin{tikzpicture} 

 \matrix (m) [matrix of math nodes,row sep=2em,column sep=1.8em,minimum width=2em,  text height = 1.5ex, text depth = 0.25ex]
  {
    \mathfrak{n}_{\red}^{\alpha} \times \mathfrak{n}_{\red}^{-\alpha} & \mathfrak{n}_{\red}^0,\\
    \mathfrak{q}^{\alpha} \times \mathfrak{q}^{-\alpha} & \mathfrak{n}_{\red}^0/\mathfrak{w} \cong \mathfrak{t_q=t/w}. \\};
  \path[-stealth]
  	(m-1-1) edge (m-1-2)
    (m-1-1) edge (m-2-1) 
    (m-2-1) edge (m-2-2) 
    (m-1-2) edge (m-2-2) 
     ;
    	
\end{tikzpicture}\]
\end{center}
\end{figure}
\noindent Denote by $d$ the image of the pairing of $\mathfrak{n}_{\red}^{\alpha}$ and  $\mathfrak{n}_{\red}^{-\alpha}$ composed with the projection \linebreak $\mathfrak{n}_{\red}^0 \rightarrow  \mathfrak{n}_{\red}^0/\mathfrak{w}$. According to what precedes this is a line of $\mathfrak{n}_{\red}^0/\mathfrak{w}$.\\
The situation can be summarized in the commutative diagram below:
\begin{figure}[H]
\begin{center}
\[\begin{tikzpicture} 

 \matrix (m) [matrix of math nodes,row sep=2em,column sep=1.8em,minimum width=2em,  text height = 1.5ex, text depth = 0.25ex]
  {
   0 & \mathfrak{v}^{\pm \alpha} & \mathfrak{n}_{\red}^{\pm\alpha} &  \mathfrak{q}^{\pm\alpha},\\
  & & \Hom(\mathfrak{n}_{\red}^{\mp\alpha},d).  & \\};
  \path[-stealth]
  	(m-1-1) edge (m-1-2)
  	(m-1-2) edge (m-1-3)
	(m-1-3) edge (m-1-4)
    (m-1-3) edge (m-2-3) 
    (m-1-4) edge (m-2-3) 
     ;
    	
\end{tikzpicture}\]
\end{center}
\end{figure}
\noindent In other words one has $\mathfrak{v}^{\pm \alpha} = \ker\left(\mathfrak{v}^{\pm \alpha} \rightarrow \Hom(\mathfrak{n}_{\red}^{\mp\alpha},d)\right)$ and $\mathfrak{v}^{\pm \alpha}$ is a sub-representation of the representation defined by the action of $H$ on $\mathfrak{n}_{\red}$, thus it is $H$-stable.  
	\end{itemize}	 
\end{proof}

Combining Lemmas \ref{Lie_N_red_stable_par_H} and \ref{Lie_rad_unip-stable_par_H} one can show an infinitesimal version of \cite[Théorème 2.5]{BDP}, namely:

\begin{proposition}
Let $G$ be a reductive group over an algebraically closed field $k$ of characteristic $p>0$ which is assumed to be separably good for $G$. Let $\phi : \mathcal{N}_{\red}(\mathfrak{g}) \rightarrow \mathcal{V}_{\red}(G)$ be a Springer isomorphism for $G$. If $N \subseteq G$ is a $\phi$-infinitesimally saturated subgroup, then:
\begin{enumerate}
	\item the Lie algebra $\mathfrak{n}_{\red}$ is an ideal of $\mathfrak{n}$,
	\item the Lie algebra of the unipotent radical of $N_{\red}$ is an ideal of $\mathfrak{n}$. 
\end{enumerate}
\label{generalisation_infinitesimale_Deligne}
\end{proposition}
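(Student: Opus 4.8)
The plan is to combine the two preceding lemmas with the faithful flatness of $N_{\red}\times H\rightarrow N$ to reduce both claims to $H$-stability statements that have already been established. By Lemma \ref{analogue_2.3} we may and do assume that $N$ is connected. First recall from the proof of Lemma \ref{Lie_N_red_stable_par_H} (which cites the proof of \cite[lemma 2.14]{BDP}) that the multiplication map $N_{\red}\times H\rightarrow N$ is faithfully flat, so that $N$ is generated, as an $\fppf$-sheaf, by $N_{\red}$ and the maximal connected subgroup $H$ of multiplicative type. Consequently, a $k$-subspace $\mathfrak{m}\subseteq\mathfrak{n}$ is an ideal of $\mathfrak{n}$ as soon as it is stable under $\Ad(N_{\red})$ and under $\Ad(H)$: the former follows from $\mathfrak{m}$ being a Lie subalgebra invariant under the adjoint action of the smooth connected group $N_{\red}$, and the latter is exactly what Lemmas \ref{Lie_N_red_stable_par_H} and \ref{Lie_rad_unip-stable_par_H} provide.

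For part (1): Lemma \ref{Lie_N_red_stable_par_H} already asserts that $\mathfrak{n}_{\red}=\Lie(N_{\red})$ is an ideal of $\mathfrak{n}$ acted on by $H$, so there is essentially nothing further to do — one only needs to observe that the passage from $H$-stability to $N$-stability is the $\fppf$-generation argument above, carried out on $R[\epsilon]$-points for every $k$-algebra $R$, exactly as at the end of the proof of Lemma \ref{Lie_N_red_stable_par_H}. For part (2): set $V:=\Rad_U(N_{\red})$ and $\mathfrak{v}:=\Lie(V)$. Since $V$ is a normal subgroup of the smooth connected group $N_{\red}$, the subspace $\mathfrak{v}$ is an ideal of $\mathfrak{n}_{\red}$, hence in particular $\Ad(N_{\red})$-stable. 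By Lemma \ref{Lie_rad_unip-stable_par_H} it is also $\Ad(H)$-stable. Applying the $\fppf$-generation criterion once more, $\mathfrak{v}$ is stable under $\Ad(N)$; reasoning on $R[\epsilon]$-points shows that this $N$-stable Lie subalgebra is in fact an ideal of $\mathfrak{n}$, which is the claim.

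The only point requiring a little care — and the one I would expect to be the main obstacle, though it is more bookkeeping than genuine difficulty — is the transition from ``stable under $\Ad(N_{\red})$ and under $\Ad(H)$'' to ``ideal of $\mathfrak{n}$''. One must check that the $\fppf$-sheaf generated by $N_{\red}$ and $H$ really does act on $\mathfrak{n}$ through the given subspace: because $N_{\red}\times H\rightarrow N$ is faithfully flat, any local section of $N$ over an $\fppf$-cover is a product of sections of $N_{\red}$ and of $H$, and $\Ad$ being a group homomorphism the stability propagates to such products; faithful flatness then descends the conclusion back to $N$. Finally, to upgrade from $\Ad(N)$-invariance of $\mathfrak{m}$ to the bracket relation $[\mathfrak{n},\mathfrak{m}]\subseteq\mathfrak{m}$, one differentiates the $N$-action: for $x\in\mathfrak{n}$ one evaluates $\Ad(1+\epsilon x)$ on $\mathfrak{m}\otimes_k R[\epsilon]$ for an arbitrary $k$-algebra $R$, and the $R[\epsilon]$-linear part of the invariance $\Ad(1+\epsilon x)(\mathfrak{m})=\mathfrak{m}$ reads $\ad(x)(\mathfrak{m})\subseteq\mathfrak{m}$. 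This is precisely the argument invoked at the close of the proofs of the two cited lemmas, and it completes the proof.
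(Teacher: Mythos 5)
Your proposal is correct and follows essentially the same route as the paper: part (1) is exactly the content of Lemma \ref{Lie_N_red_stable_par_H}, and part (2) is obtained by combining the $H$-stability of $\Lie(\Rad_U(N_{\red}))$ from Lemma \ref{Lie_rad_unip-stable_par_H} with the $\fppf$-generation of $N$ by $N_{\red}$ and $H$ (faithful flatness of $N_{\red}\times H \rightarrow N$), followed by the $R[\epsilon]$-points argument to pass from $\Ad(N)$-stability to the ideal condition. Your only addition is to make explicit the $\Ad(N_{\red})$-stability coming from normality of $\Rad_U(N_{\red})$ in $N_{\red}$ and the differentiation step, which the paper leaves implicit.
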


\begin{proof}
The first point is provided by Lemma \ref{Lie_N_red_stable_par_H}. The second point follows from a direct application of Lemma \ref{Lie_rad_unip-stable_par_H} combined with \cite[Lemma 2.14]{BDP}: the subgroup $N$ being generated as an $\fppf$-sheaf by $H$ and $N_{\red}$, one only needs to show that $\radu(N_{\red})$ is $H$-stable. This has been shown by the aforementioned lemma. A reasoning on  $R[\epsilon]$-points for any $k$-algebra $R$ then leads to obtain that $\radu(N_{\red})$ is an ideal of $\mathfrak{n}$. 
\end{proof}

 	\subsection{Proof of Theorem \ref{generalisation_Deligne}}
We can now prove Theorem \ref{generalisation_Deligne}.

We start by showing that $N_{\red}$ is a normal subgroup of $N$. The latter being generated by $H$ and $N_{\red}$ as an $\fppf$-sheaf, one actually needs to show that $N_{\red}$ is $H$-stable. The reasoning follows the proof of Lemma \ref{Lie_N_red_stable_par_H}: we consider the subgroup $E_{Z^0, J_{\mathfrak{n}^{\alpha}}}$ generated by $Z^0 := Z^0_{N_{\red}}(T)$ and $J_{\mathfrak{n}^{\alpha}}$ for $\alpha \in X(T)^*$. Recall that the $J_{\mathfrak{n}^{\alpha}}$'s are themselves the subgroups generated as $\fppf$-sheaves by the image of the morphisms 
\begin{alignat*}{3}
\psi_{\alpha} :  W(\:&\mathfrak{n}^{\alpha}) \:&\times \: &  \mathbb{G}_a \:& \rightarrow \:& G \\
\:& (x,\:& \: & t) \:& \mapsto \:& \phi(tx).
\end{alignat*}
Note that $\psi_{\alpha}$ is well-defined for any $\alpha \in X(T)^*$: 
\begin{itemize}
\item any weight space $\mathfrak{n}^{\alpha}$ consists in $p$-nilpotent elements because we consider the action of a torus, 
\item any weight space $\mathfrak{n}^{\alpha}$ is geometrically reduced and geometrically connected as it is a vector space. 
\end{itemize}
\noindent Thus the groups $J_{\mathfrak{n}^{\alpha}}$ are smooth and connected (this last point is ensured by \cite[VIB, Corollaire 7.2.1]{SGA31}).
 
The arguments of the proof of Lemma \ref{action_H_sur_centralisateur} apply and allow to show that the $k$-subgroup $E_{Z^0, J_{\mathfrak{n}^{\alpha}}}$ is contained in $N$ (this subgroup being $\phi$-infinitesimally saturated), and even in $N_{\red}$ as it is smooth. Moreover recall that $p$ is not of torsion for $G$ and that for any non-zero weight the corresponding weight space is $p$-nil. Therefore, they are all embeddable into the Lie algebra of the unipotent radical of a Borel subgroup $B\subseteq G$. The weight spaces   
$\mathfrak{n}^{\alpha}$ are all contained in $\Lie(E_{Z^0, J_{\mathfrak{n}^{\alpha}}}) =: \mathfrak{e}$ because the differential at $0$ of the restriction of $\phi$ to the Lie algebra of the unipotent radical of any Borel subgroup is the identity. The Lie algebra $\Lie(Z^0)$ also satisfies this inclusion as $Z^0 \subset E_{Z^0, J_{\mathfrak{n}^{\alpha}}}$. 

To summarize we have shown that $\mathfrak{n}_{\red} = \Lie(Z^0) \oplus \bigoplus_{\alpha \in X(T)^*} \mathfrak{n}^{\alpha} \subseteq \mathfrak{e}$. The groups involved here being smooth and connected the equality of Lie algebras lifts to an equality of groups (see \cite[II, \S5, n\degree 5.5]{DG}) hence the identity $E_{Z^0, J_{\mathfrak{n}^{\alpha}}}= N^0_{\red}$. 

Thus the problem restricts to showing the $H$-stability of $E_{Z^0, J_{\mathfrak{n}^{\alpha}}}$. By Lemma \ref{action_H_sur_centralisateur} the centraliser $Z^0$ is $H$-invariant, so one only has to show the $H$-stability of the $J_{\mathfrak{n}^{\alpha}}$'s. As $H$ normalises $T$ and as $\phi$ is $G$-equivariant any $\mathfrak{n}^{\alpha}$ is $H$-invariant. Hence $N_{\red}$ is a normal subgroup of $N$.

Recall that in the preamble of section \ref{infinitesimal_version} we have explained that $H$ is actually equal to the product $T \times D$ (for $D$ a $k$-diagonalisable group). To prove that $N/N_{\red}$ is of multiplicative type we show that it is isomorphic to the group $D$. As:
\begin{itemize}
\item the group $H$ normalises $N_{\red}$ (which is normal in $N$)
\item the equality $HN_{\red} = N$ is satisfied as well as the following isomorphism $N_{\red} \cong H \cap N_{\red}$, \end{itemize}
\noindent one has an isomorphism of $\fppf$-sheaves which turns out to be an isomorphism of algebraic groups $H/H_{\red} \cong N/N_{\red} \cong D$.

To end the proof of the first point of the theorem it remains to show that the unipotent radical of $N_{\red}$, denoted by $V$, is normal in $N$. Once again, the $\fppf$-formalism reduces the problem to showing the $H$-invariance of $V$, the unipotent radical $\Rad_U(N_{\red})$ being normal in $N_{\red}$. The reasoning follows the proof of the normality of $N_{\red}$ in $N$: we consider the subgroup $E_{W,J_{\mathfrak{v}}^{\alpha}}$ generated by $W$ and $J_{\mathfrak{v}^{\alpha}}$ for $\alpha \in X(T)^*$. As $W$ and $J_{\mathfrak{v}^{\alpha}}$ are normal in $N_{\red}$, the subgroup $E_{W,J_{\mathfrak{v}^{\alpha}}}$ is a unipotent smooth connected normal subgroup of $N$, thus it is contained in the unipotent radical of $N_{\red}$. 

Moreover for any non zero weight, the corresponding weight space can be embedded into the Lie algebra of the unipotent radical of a Borel subgroup (as $p$ is not of torsion for $G$ and the considered weight-space is $p$-nil). Once again we make use of the properties of the differential of $\phi$ at $0$ to conclude that $\mathfrak{v}= \mathfrak{w} \oplus \bigoplus_{\alpha \in X(T)^*} \mathfrak{v}^{\alpha}$ is contained in $\Lie(E_{W,J_{\mathfrak{v}}^{\alpha}})$. This implies the equality $V=  E_{W,J_{\mathfrak{v}}^{\alpha}}$ for the same reasons as above. This equality being satisfied the result follows from stability properties established in the proof of Lemma \ref{Lie_rad_unip-stable_par_H}. Indeed we have shown that then $W$ as well as any $J_{\mathfrak{v}}^{\alpha}$, for non trivial $\alpha$, are $H$-stable. Combining this with the $G$-equivariance of $\phi$ leads to the conclusion that $V$ is a normal subgroup of $N$.

It remains to show the last point of Theorem \ref{generalisation_Deligne} which is a generalised version of \cite[Theorem 1.7 iii)]{D1} (see also \cite[Theorem 2.5 ii)]{BDP}). A careful reading of the proof of this latter shows that it does not depend on the additional assumptions made by the author (that, in practice, reduce the range of allowed characteristics). The arguments are hence the same as the one provided by P. Deligne in the framework of \cite[2.25]{D1} (see also \cite[Corollary 2.15]{BDP}) and the proof is reproduced here only for sake of clarity.

The reduced part $N_{\red} \subseteq N$ is now assumed to be reductive. We show that the connected component of the identity $M^0$ of $M= \ker\left(H\rightarrow \Aut(N_{\red})\right)$ is the central connected subgroup of multiplicative type we are seeking. It is clearly of multiplicative type as it is a closed subgroup of $H$ (see \cite[IV \S 1 Corollaire 2.4 a)]{DG}). Thus we need to show that it is central and that $M^0 \times N_{\red} \rightarrow N$ is an epimorphism. The first assertion is clear as:
\begin{itemize}
 \item the connected group $M^0$ centralises $N_{\red}$,
 \item and $N$ is generated by $H$ and $N_{\red}$ as a $\fppf$-sheaf (as shown previously).
\end{itemize} 
\noindent To show that $M^0 \times N_{\red} \rightarrow N$ is an epimorphism, one proves that $N$ is generated by $M^0$ and $N_{\red}$ as a $\fppf$-sheaf. We already know that $N$ is generated by $N_{\red}$ and $H$. To conclude we show:
\begin{itemize}
\item that $M$ is generated by $M_{\red}\subset N_{\red}$ and $M^0$,
\item that $H$ is generated by $M$ and $T$.
\end{itemize}  

The assertion for $M$ is the consequence of structural properties of groups multiplicative type: the field $k$ being algebraically closed, any group of multiplicative type is diagonalisable. Hence $M$ is isomorphic to a product of $\mathbb{G}_m$, $\mu_{q}$ and $\mu_{p^i}$ (for $(q,p) =1$) (see the proof of \cite[VIII, Proposition 2.1]{SGA31}). Its reduced component being smooth of multiplicative type, the order of its torsion part is coprime with $p$ (see \cite[VIII, Proposition 2.1]{SGA31}). Hence $M/M_{\red}$ is a product of groups of the form $\mu_{p^i}$ for $i \in \mathbb{N}$. Conversely, the quotient $M/M^0$ is a product of $\mu_{q}$ with $(p,q) = 1$, hence the result.

One still has to show that $H$ is generated by $M$ and $T$. Recall that we have shown previously that $N_{\red}$ is stable under the action of $H$-conjugation on $N$. Note that this action fixes $T$, hence we have the following diagram (according to \cite[XXIV, Proposition 2.11]{SGA33}, the group $N_{\red}$ being reductive):
\begin{figure}[H]
\begin{center}
\[\begin{tikzpicture} 

 \matrix (m) [matrix of math nodes,row sep=2em,column sep=1.8em,minimum width=2em,  text height = 1.5ex, text depth = 0.25ex]
  {
   H & \Aut(N_{\red}),\\
   T^{\Ad} & N_{\red}^{\Ad}.\\
 };
  \path[-stealth]
  	(m-1-1) edge (m-1-2)
  	(m-1-1) edge (m-2-1)
	(m-2-1) edge (m-2-2)
   (m-2-2) edge (m-1-2)
     ;
    	
\end{tikzpicture}\]
\end{center}
\end{figure}

The action of $H$ on $N_{\red}$ thus factors through $T^{\Ad}$. So we have the following exact sequence:   
\begin{figure}[H]
\begin{center}
\[\begin{tikzpicture} 

 \matrix (m) [matrix of math nodes,row sep=2em,column sep=1.8em,minimum width=2em,  text height = 1.5ex, text depth = 0.25ex]
  {
   1 & M & H & T^{\Ad} & 1.\\
 };
  \path[-stealth]
  	(m-1-1) edge (m-1-2)
  	(m-1-2) edge (m-1-3)
	(m-1-3) edge (m-1-4)
   (m-1-4) edge (m-1-5)
     ;
    	
\end{tikzpicture}\]
\end{center}
\end{figure}
\noindent Hence $H$ is generated as a $\fppf$-sheaf by  $M$ and $T^{\Ad}$, whence by $M$ and $T$ as $T^{\Ad}$ is a quotient of $T$. 

\section{Integration of some maximal $p$-nil $p$-subalgebras $\mathfrak{g}$}
	\label{ss_section_intégration_nil}

Let us start with the very specific case which has motivated our interest in the questions studied in this article:  assume $\mathfrak{u}\subseteq \mathfrak{g}$ to be a restricted $p$-subalgebra which is the set of $p$-nilpotent elements of $\rad(N_{\mathfrak{g}}(\mathfrak{u}))$. Note that $N_{\mathfrak{g}}(\mathfrak{u})$ is a restricted $p$-Lie algebra (according to Lemma \ref{lie_normalisateurs} as it derives from an algebraic $k$-group, namely $N_G(\mathfrak{u})$). Moreover $\mathfrak{u}$ is a restricted $p$-nil $p$-subalgebra of $\mathfrak{g}$.

\begin{lemma}
	Let $G$ be a reductive group over a field $k$ of characteristic $p>0$ which is assumed to be separably good for $G$ and let $\mathfrak{u} \subseteq \mathfrak{g}$ be a subalgebra. If $\mathfrak{u}$ is the set of $p$-nilpotent elements of the radical of its normaliser in $\mathfrak{g}$, denoted by $N_{\mathfrak{g}}(\mathfrak{u})$, the subalgebra $\mathfrak{u}$ is integrable by $J_{\mathfrak{u}}$.
	\label{p_nil_integrable}
	\end{lemma}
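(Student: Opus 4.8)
The plan is to show that the canonical candidate subgroup $J_{\mathfrak{u}}$ constructed in subsection \ref{candidat} actually satisfies $\Lie(J_{\mathfrak{u}}) = \mathfrak{u}$, not merely the inclusion $\mathfrak{u} \subseteq \mathfrak{j_u}$ provided by lemma \ref{alg_lie_groupe_engendre}. Write $\mathfrak{n} := N_{\mathfrak{g}}(\mathfrak{u})$; by hypothesis $\mathfrak{u}$ is exactly the $p$-nilpotent part of $\rad(\mathfrak{n})$. First I would record that $\mathfrak{n}$ is a restricted $p$-subalgebra, being $\Lie(N_G(\mathfrak{u}))$ (lemma \ref{lie_normalisateurs} / the references cited just before the statement), so $\rad(\mathfrak{n})$ and its $p$-nil part $\mathfrak{u}$ are again restricted $p$-subalgebras, and $\mathfrak{u}$ is $p$-nil, so $J_{\mathfrak{u}}$ is a well-defined smooth connected unipotent subgroup of $G$ by lemma \ref{alg_lie_groupe_engendre}.

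The key point is that $\mathfrak{j_u} = \Lie(J_{\mathfrak{u}})$ is normalised by $N_G(\mathfrak{u})$: indeed by lemma \ref{inclusion_normalisateurs_J_u_et_u} the group $N_G(\mathfrak{u})$ normalises $J_{\mathfrak{u}}$, hence (applying $\Lie$ and the functoriality of the adjoint action) it normalises $\mathfrak{j_u}$ as well. Therefore $\mathfrak{j_u}$ is an $\mathfrak{n}$-submodule of $\mathfrak{g}$ under the adjoint action, i.e. $\mathfrak{j_u}$ is an ideal of $\mathfrak{n}$ (reasoning on $R[\epsilon]$-points, as in the proof of lemma \ref{Lie_N_red_stable_par_H}); since it sits inside $\mathfrak{n}$ — because $\mathfrak{u}\subseteq \mathfrak{j_u}$ forces $\mathfrak{j_u}\subseteq N_{\mathfrak{g}}(\mathfrak{j_u})$, and $N_G(\mathfrak{u})$-invariance plus $\mathfrak{u} \subseteq \mathfrak{j_u}$ with the maximality hypothesis pins it down — I want to conclude $\mathfrak{j_u}\subseteq \rad(\mathfrak{n})$. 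For this I use that $J_{\mathfrak{u}}$ is unipotent, so $\mathfrak{j_u}$ is a $p$-nil ideal of $\mathfrak{n}$; a $p$-nil ideal of a restricted Lie algebra is solvable, hence contained in the radical $\rad(\mathfrak{n})$. Thus $\mathfrak{j_u}$ is a $p$-nil subalgebra of $\rad(\mathfrak{n})$, so by the defining property of $\mathfrak{u}$ (it is \emph{all} of the $p$-nilpotent elements of $\rad(\mathfrak{n})$) we get $\mathfrak{j_u} \subseteq \mathfrak{u}$. Combined with lemma \ref{alg_lie_groupe_engendre} this yields $\mathfrak{j_u} = \mathfrak{u}$, so $J_{\mathfrak{u}}$ integrates $\mathfrak{u}$.

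The step I expect to be the main obstacle is the intermediate containment $\mathfrak{j_u}\subseteq \mathfrak{n}=N_{\mathfrak{g}}(\mathfrak{u})$: a priori $J_{\mathfrak{u}}$ is generated by $t$-power maps attached to elements of $\mathfrak{u}$, and while lemma \ref{inclusion_normalisateurs_J_u_et_u} tells us $N_G(\mathfrak{u})$ normalises $J_{\mathfrak{u}}$, what is needed here is the reverse — that $J_{\mathfrak{u}}$ normalises $\mathfrak{u}$, equivalently $J_{\mathfrak{u}}\subseteq N_G(\mathfrak{u})$, so that $\mathfrak{j_u}\subseteq \mathfrak{n}$. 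I would obtain this from $G$-equivariance of $\phi$: for $x \in \mathfrak{u}$ the image of $\phi_x$ acts on $\mathfrak{u}$ through $\Ad$, and one checks on $R[\epsilon]$-points that $\Ad(\phi_x(t))$ preserves $\mathfrak{u}$ using that $\mathfrak{u}$ is a $p$-subalgebra stable under $\ad(x)$ (here $x\in\rad(\mathfrak{n})$ and $\mathfrak{u}\subseteq\rad(\mathfrak{n})$ with $[\mathfrak{u},\mathfrak{u}]\subseteq\mathfrak{u}$, and more generally $[\rad(\mathfrak{n}),\mathfrak{u}]\subseteq\mathfrak{u}$ since $\mathfrak{u}$, being characteristic in $\rad(\mathfrak{n})$ as its $p$-nil part, is an ideal of $\mathfrak{n}$). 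Once $J_{\mathfrak{u}} \subseteq N_G(\mathfrak{u})$ is in hand, everything flows from the radical/$p$-nil argument above; the remaining verifications (that $p$-nil ideals land in the radical, the $R[\epsilon]$-point computations) are routine in this setting and parallel arguments already used in section \ref{section_Deligne}.
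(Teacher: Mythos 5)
There is a genuine gap, and it sits exactly where you yourself locate ``the main obstacle'': the containment $J_{\mathfrak{u}}\subseteq N_G(\mathfrak{u})$, equivalently $\mathfrak{j_u}\subseteq \mathfrak{n}:=N_{\mathfrak{g}}(\mathfrak{u})$. Your proposed justification is that, for $x\in\mathfrak{u}$, the operator $\Ad(\phi_x(t))$ preserves $\mathfrak{u}$ because $\mathfrak{u}$ is a $p$-subalgebra stable under $\ad(x)$. In separably good characteristic there is no exponential or Baker--Campbell--Hausdorff type formula expressing $\Ad(\phi_x(t))$ in terms of $\ad(x)$ for a general Springer isomorphism $\phi$; stability of a subspace under $\ad(x)$ does not transfer to stability under $\Ad(\phi_x(t))$. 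This is precisely the difficulty that motivates the notion of $\phi$-infinitesimal saturation: in the companion lemma \ref{p_rad_integrable} the hypothesis that $N_G(\mathfrak{h})$ is $\phi$-infinitesimally saturated is imposed exactly in order to know that the $t$-power maps of elements of $\mathfrak{u}$ land in the normaliser. If your argument worked as stated, that hypothesis would be superfluous in general. So the chain ``$\mathfrak{j_u}$ is a $p$-nil ideal of $\mathfrak{n}$, hence lies in $\rad(\mathfrak{n})$, hence in $\mathfrak{u}$'' cannot be started, because you have not placed $\mathfrak{j_u}$ inside $\mathfrak{n}$ (and you cannot get this a posteriori from $\mathfrak{j_u}=\mathfrak{u}$, which is the conclusion).

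The paper's proof is designed to avoid ever needing $\mathfrak{j_u}\subseteq\mathfrak{n}$. It only uses the trivial inclusion $J_{\mathfrak{u}}\subseteq N_G(J_{\mathfrak{u}})$, giving $\mathfrak{u}\subseteq\mathfrak{j_u}\subseteq N_{\mathfrak{g}}(\mathfrak{j_u})$, and then argues by contradiction: if $\mathfrak{u}\subsetneq\mathfrak{j_u}$, Engel's theorem (normalisers grow in a nilpotent algebra) gives $\mathfrak{u}\subsetneq N_{\mathfrak{j_u}}(\mathfrak{u})=\mathfrak{j_u}\cap N_{\mathfrak{g}}(\mathfrak{u})$. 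This intersection lies in $\mathfrak{n}$ \emph{by construction}; it is an ideal of $\mathfrak{n}$ because $N_G(\mathfrak{u})$ normalises $J_{\mathfrak{u}}$ (lemma \ref{inclusion_normalisateurs_J_u_et_u}, the ingredient you also use, which only goes in the harmless direction), and it is $p$-nil since $\mathfrak{j_u}$ is (lemma \ref{rad_unip_p_nil}). Hence $N_{\mathfrak{j_u}}(\mathfrak{u})\subseteq\rad(\mathfrak{n})$ and, consisting of $p$-nilpotent elements, it is contained in $\mathfrak{u}$ by the defining maximality of $\mathfrak{u}$ --- contradicting the strict growth. Your closing step (a $p$-nil ideal of $\mathfrak{n}$ lands in $\rad(\mathfrak{n})$, hence in $\mathfrak{u}$) is the same mechanism, but you apply it to $\mathfrak{j_u}$ itself, which requires the unproved containment; applying it to $\mathfrak{j_u}\cap\mathfrak{n}$ together with the Engel argument is what repairs the proof.
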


	\begin{proof}
According to Lemma \ref{alg_lie_groupe_engendre}, there is a unipotent smooth connected subgroup $J_{\mathfrak{u}}\subset G$ such that the inclusion  $\mathfrak{u}\subseteq \mathfrak{j_u}:= \Lie(J_{\mathfrak{u}})$ holds true. Moreover, as according to Lemma \ref{normalisateurs_inclusion} one has $J_{\mathfrak{u}} \subseteq N_G(J_{\mathfrak{u}}) \subseteq N_G(\mathfrak{j_u})$, at the Lie algebra level the following inclusions are satisfied:
\[\mathfrak{u} \subset \mathfrak{j_u} \subseteq \Lie(N_{G}(J_{\mathfrak{u}})) \subseteq N_{\mathfrak{g}}(\mathfrak{j_u}).\]

Assume the inclusion $\mathfrak{u} \subset \mathfrak{j_u}$ to be strict, then $\mathfrak{u}$ is a proper subalgebra of its normaliser in $\mathfrak{j_u}$ (this is a corollary of Engel Theorem, see for example \cite[\S4 n\degree 1 Proposition 3]{BOU1}). In other words one has $\mathfrak{u} \subsetneq N_{\mathfrak{j_u}}(\mathfrak{u}):= \mathfrak{j_u}\cap N_{\mathfrak{g}}(\mathfrak{u}).$ But according to Lemma \ref{inclusion_normalisateurs_J_u_et_u} the group $J_{\mathfrak{u}}$ is normalised by $N_G(\mathfrak{u})$, hence $N_{\mathfrak{j_u}}(\mathfrak{u}) \subseteq N_{\mathfrak{g}}(\mathfrak{u})$ is an ideal of $N_{\mathfrak{g}}(\mathfrak{u})$. It is:
\begin{itemize}
\item a restricted $p$-algebra (as it derives from an algebraic group according to Lemma \ref{lie_normalisateurs}), 
\item a restricted $p$-ideal (as the restriction of the $p$-structure of $N_{\mathfrak{g}}(\mathfrak{u})$ coincides with the one inherited from $N_{J_\mathfrak{u}}(\mathfrak{u})$), 
\item and even a $p$-nil $p$-ideal (as $\mathfrak{j_u}$ is $p$-nil according to Lemma \ref{rad_unip_p_nil}). 
\end{itemize}
\noindent In particular, it is a solvable ideal of $N_{\mathfrak{g}}(\mathfrak{u})$ whence the inclusion $N_{\mathfrak{j_u}}(\mathfrak{u}) \subseteq \rad(N_{\mathfrak{g}}(\mathfrak{u}))$. To summarize: the set $\mathfrak{u}$ of $p$-nilpotent elements of $\rad(N_{\mathfrak{g}}(\mathfrak{u}))$ is contained in a $p$-nil ideal of this radical (namely $N_{\mathfrak{j_u}}(\mathfrak{u})$) hence is equal to the latter. This contradicts the strictness of the inclusion, whence the equality $\mathfrak{u} = \mathfrak{j_u}$. This in particular means that there exists a unipotent smooth connected subgroup $J_{\mathfrak{u}} \subseteq G$ such that $\Lie(J_{\mathfrak{u}}) = \mathfrak{u}$. Thus $\mathfrak{u}$ is integrable.
 	\end{proof}

Let now $\mathfrak{h} \subseteq \mathfrak{g}$ be a subalgebra, and denote by $\mathfrak{u}$ the $p$-radical of $N_{\mathfrak{g}}(\mathfrak{h})$. The $p$-radical of $N_{\mathfrak{g}}(\mathfrak{h})$ being a restricted $p$-nil $p$-ideal, the work done in section \ref{section_Deligne}
allows to associate to $\mathfrak{u}$ a unipotent, smooth, connected subgroup $J_{\mathfrak{u}} \subset G$. 

\begin{lemma}
The subgroup $N_G(\mathfrak{h})$ normalises $J_\mathfrak{u}$.
\label{analogue_normalisation_rad_p}
\end{lemma}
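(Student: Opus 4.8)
The plan is to deduce the statement from Lemma \ref{inclusion_normalisateurs_J_u_et_u} by establishing the inclusion of normalisers $N_G(\mathfrak{h}) \subseteq N_G(\mathfrak{u})$. Indeed $\mathfrak{u} = \rad_p(N_{\mathfrak{g}}(\mathfrak{h}))$ is a restricted $p$-nil $p$-sub-algebra of $\mathfrak{g}$, so Lemma \ref{inclusion_normalisateurs_J_u_et_u} applies verbatim to it and yields $N_G(\mathfrak{u}) \subseteq N_G(J_{\mathfrak{u}})$; combined with the inclusion above this gives $N_G(\mathfrak{h}) \subseteq N_G(J_{\mathfrak{u}})$, which is exactly the claim.

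To prove $N_G(\mathfrak{h}) \subseteq N_G(\mathfrak{u})$ I would argue functorially. For a $k$-algebra $R$ and $g \in N_G(\mathfrak{h})(R)$, the element $\Ad(g)$ is an $R$-automorphism of $\mathfrak{g}_R := \mathfrak{g}\otimes_k R$ stabilising $\mathfrak{h}_R$; since the normaliser is intrinsic to $\mathfrak{h}_R$ and the formation of $N_{\mathfrak{g}}(\mathfrak{h})$ commutes with the flat base change $k\to R$, the map $\Ad(g)$ restricts to an automorphism of the restricted $p$-Lie algebra $N_{\mathfrak{g}}(\mathfrak{h})\otimes_k R$ over $R$. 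As the $p$-radical is the largest $p$-nil $p$-ideal, it is preserved by any such automorphism; using moreover that the formation of the $p$-radical commutes with the flat base change $k \to R$ (so that $\rad_p(N_{\mathfrak{g}}(\mathfrak{h})\otimes_k R) = \mathfrak{u}\otimes_k R$), I conclude $\Ad(g)(\mathfrak{u}\otimes_k R) = \mathfrak{u}\otimes_k R$, i.e. $g \in N_G(\mathfrak{u})(R)$. Yoneda's lemma then gives the inclusion of $k$-group schemes. Equivalently, one could instead transcribe the $\fppf$-covering computation of Lemma \ref{inclusion_normalisateurs_J_u_et_u} essentially word for word — writing $h_S = \psi_{\mathfrak{u}}(x_1,s_1)\times\cdots\times\psi_{\mathfrak{u}}(x_n,s_n)$ after an $\fppf$ extension $S\to R$ and invoking the $G$-equivariance of $\phi$ — the only new input being $\Ad(g_S)(x_i)\in\mathfrak{u}_S$.

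I expect the main obstacle to be exactly this last point: justifying that $\Ad(g)$ preserves $\mathfrak{u}$ at the level of arbitrary $R$-points, not merely at geometric points, since here $k$ is only assumed to be a field and $N_G(\mathfrak{h})$ need not be reduced. If $N_G(\mathfrak{h})$ were known to be smooth one could check the stability on $\bar k$-points and on tangent vectors and be done, but in general the argument requires knowing that both the normaliser and the $p$-radical of $N_{\mathfrak{g}}(\mathfrak{h})$ behave well under flat base change; that is where I would spend the care. Everything else is a direct application of Lemma \ref{inclusion_normalisateurs_J_u_et_u}.
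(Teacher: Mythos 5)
Your skeleton is the same as the paper's: the paper proves this lemma by transcribing the $\fppf$ computation of lemma \ref{inclusion_normalisateurs_J_u_et_u} verbatim, and the only new ingredient is exactly the one you isolate, namely that $\Ad(g_S)x_i \in \mathfrak{u}_S$ for $g$ a point of $N_G(\mathfrak{h})$ over an arbitrary $k$-algebra; packaging this as the inclusion $N_G(\mathfrak{h}) \subseteq N_G(\mathfrak{u})$ followed by lemma \ref{inclusion_normalisateurs_J_u_et_u} applied to $\mathfrak{u}$ is only a cosmetic repackaging of that.

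The gap is in the justification you give for this new input. You argue that $\Ad(g)$ preserves $\mathfrak{u}\otimes_k R$ because the $p$-radical is preserved by any automorphism of a restricted $p$-Lie algebra and because ``the formation of the $p$-radical commutes with the flat base change $k \to R$''. Both halves fail once $R$ has nilpotents (and every $k$-algebra is flat over the field $k$, so flatness is no restriction). For the one-dimensional toral algebra $\mathfrak{m} = kx$ with $x^{[p]}=x$ one has $\rad_p(\mathfrak{m})=0$, yet over $R=k[\epsilon]/(\epsilon^2)$ the ideal $(\epsilon)x$ is $p$-nil, so the largest $p$-nil $p$-ideal of $\mathfrak{m}\otimes_k R$ is strictly larger than $\rad_p(\mathfrak{m})\otimes_k R$. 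Worse, an $R$-linear automorphism preserving bracket and $p$-operation need not preserve $\rad_p(\mathfrak{m})\otimes_k R$: on the abelian algebra $\mathfrak{m}=kx\oplus ky$ with $x^{[p]}=x$, $y^{[p]}=0$, the map $x\mapsto x$, $y\mapsto y+\epsilon x$ over $k[\epsilon]/(\epsilon^2)$ is such an automorphism and moves $\rad_p(\mathfrak{m})\otimes_k R = Ry$. So the abstract ``characteristic ideal plus base change'' principle cannot by itself deliver $\Ad(g)(\mathfrak{u}_R)=\mathfrak{u}_R$ for non-reduced $R$; one must genuinely use that the automorphisms at hand are adjoint ones coming from points of the subgroup scheme $N_G(\mathfrak{h})\subseteq G$, i.e.\ one needs the scheme-theoretic stability of $\mathfrak{u}$ under the coaction of $N_G(\mathfrak{h})$ on $N_{\mathfrak{g}}(\mathfrak{h})$. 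You have correctly located the delicate point (and, as you say, smoothness of $N_G(\mathfrak{h})$ would settle it by density of $\bar{k}$-points), but the patch you propose is precisely the step that does not hold as stated; for comparison, the paper itself disposes of the point with the single remark that $\mathfrak{u}$ is an ideal of $N_{\mathfrak{g}}(\mathfrak{h})$, which only gives first-order (Lie-algebra) stability and is thus no more complete on exactly this issue when $N_G(\mathfrak{h})$ is non-reduced.
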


	\begin{proof}
One only needs to apply verbatim the proof of Lemma \ref{inclusion_normalisateurs_J_u_et_u} as by assumption $\mathfrak{u}$ is an ideal of $N_{\mathfrak{g}}(\mathfrak{h})$.
	\end{proof}
	
	\begin{lemma}
	Let $G$ be a reductive group over a field $k$ of characteristic $p>0$ which is assumed to be separably good for $G$ and let $\mathfrak{h} \subseteq \mathfrak{g}$ be a subalgebra such that the normaliser $N_{G}(\mathfrak{h})$ is $\phi$-infinitesimally saturated. If $\mathfrak{u}:=\rad_p(N_{\mathfrak{g}}(\mathfrak{h}))$ then the subalgebra $\mathfrak{u}$ is integrable. 
	\label{p_rad_integrable}
	\end{lemma}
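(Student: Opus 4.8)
The strategy is that of lemma \ref{p_nil_integrable}: I will prove that the natural candidate $J_{\mathfrak{u}}$ already integrates $\mathfrak{u} := \rad_p(\mathfrak{n})$, writing $\mathfrak{n} := N_{\mathfrak{g}}(\mathfrak{h})$ and $N := N_G(\mathfrak{h})$ (so that $\mathfrak{n} = \Lie(N)$ by lemma \ref{lie_normalisateurs}). The point where the present hypothesis is used — in lemma \ref{p_nil_integrable} the normaliser was taken with respect to $\mathfrak{u}$ itself, so there was nothing to check there — is that the $\phi$-infinitesimal saturation of $N$ forces $J_{\mathfrak{u}} \subseteq N$, hence $\mathfrak{j_u} \subseteq \mathfrak{n}$; once this is granted the Engel argument of lemma \ref{p_nil_integrable} carries over with almost no change.

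Concretely: since $\mathfrak{u}$ is a restricted $p$-nil $p$-ideal of $\mathfrak{n}$ (preamble of this section), the $\fppf$-formalism together with lemma \ref{alg_lie_groupe_engendre} attaches to it a unipotent, smooth, connected subgroup $J_{\mathfrak{u}} \subseteq G$ with $\mathfrak{u} \subseteq \mathfrak{j_u} := \Lie(J_{\mathfrak{u}})$, and lemma \ref{analogue_normalisation_rad_p} shows that $N$ normalises $J_{\mathfrak{u}}$; passing to $R[\epsilon]$-points this gives $[\mathfrak{n}, \mathfrak{j_u}] \subseteq \mathfrak{j_u}$. Now every element of $\mathfrak{u}$ is $p$-nilpotent and lies in $\mathfrak{n} = \Lie(N)$, so by Definition \ref{def_phi_sat} each $t$-power map $\phi_x$ ($x \in \mathfrak{u}$) factors through $N$; consequently $\psi_{\mathfrak{u}}$ factors through $N$, and so does the $\fppf$-subsheaf $J_{\mathfrak{u}}$ generated by it — this is exactly the argument used for $\psi_X$ in the proof of lemma \ref{action_H_sur_centralisateur}. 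As $J_{\mathfrak{u}}$ is then a smooth subgroup of $N$, we get $\mathfrak{j_u} \subseteq \Lie(N) = \mathfrak{n}$.

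Assume now, for contradiction, that $\mathfrak{u} \subsetneq \mathfrak{j_u}$. Since $\mathfrak{j_u}$ is $p$-nil (lemma \ref{rad_unip_p_nil}), hence nilpotent, the corollary of Engel's theorem used in lemma \ref{p_nil_integrable} yields $\mathfrak{u} \subsetneq \mathfrak{m}$ for $\mathfrak{m} := N_{\mathfrak{j_u}}(\mathfrak{u}) = \mathfrak{j_u} \cap N_{\mathfrak{g}}(\mathfrak{u})$. I would then check that $\mathfrak{m}$ is a $p$-nil $p$-ideal of $\mathfrak{n}$: it is contained in $\mathfrak{n}$ (because $\mathfrak{j_u} \subseteq \mathfrak{n}$) and $p$-nil (being inside $\mathfrak{j_u}$); it is $[p]$-stable, since $x \in \mathfrak{m}$ forces $\ad(x^{[p]}) = \ad(x)^p$ to stabilise $\mathfrak{u}$, so $x^{[p]} \in \mathfrak{m}$; and it is an ideal of $\mathfrak{n}$, for if $y \in \mathfrak{n}$ and $x \in \mathfrak{m}$ then $[y,x] \in \mathfrak{j_u}$ (as $[\mathfrak{n},\mathfrak{j_u}] \subseteq \mathfrak{j_u}$) and $[y,x] \in N_{\mathfrak{g}}(\mathfrak{u})$, because for $u \in \mathfrak{u}$ the Jacobi identity gives $[[y,x],u] = [y,[x,u]] - [x,[y,u]] \in \mathfrak{u}$, using that $\mathfrak{u} = \rad_p(\mathfrak{n})$ is an ideal of $\mathfrak{n}$ and that $x$ normalises $\mathfrak{u}$. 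By maximality of the $p$-radical, $\mathfrak{m} \subseteq \rad_p(\mathfrak{n}) = \mathfrak{u}$, contradicting $\mathfrak{u} \subsetneq \mathfrak{m}$. Hence $\mathfrak{u} = \mathfrak{j_u}$, and $J_{\mathfrak{u}}$ is a unipotent, smooth, connected subgroup of $G$ with $\Lie(J_{\mathfrak{u}}) = \mathfrak{u}$: the subalgebra $\mathfrak{u}$ is integrable.

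The only delicate point is the inclusion $J_{\mathfrak{u}} \subseteq N$: this is where $\phi$-infinitesimal saturation is essential, and it has to be established at the level of $\fppf$-sheaves — as for $\psi_X$ in lemma \ref{action_H_sur_centralisateur} — rather than merely on $k$-points; everything else is the bracket- and $[p]$-bookkeeping already carried out in lemma \ref{p_nil_integrable}, with $\rad(N_{\mathfrak{g}}(\mathfrak{u}))$ replaced by $\rad_p(\mathfrak{n})$. One can also avoid Engel's theorem altogether: being smooth, connected and normalised by $N$, the subgroup $J_{\mathfrak{u}}$ sits in $N^0_{\red}$ and is normal there, hence $J_{\mathfrak{u}} \subseteq \Rad_U(N^0_{\red})$; since $N$ is $\phi$-infinitesimally saturated, $\Lie(\Rad_U(N^0_{\red}))$ is a $p$-nil $p$-ideal of $\mathfrak{n}$ by Theorem \ref{generalisation_Deligne}, and then the chain $\mathfrak{u} \subseteq \mathfrak{j_u} \subseteq \Lie(\Rad_U(N^0_{\red})) \subseteq \rad_p(\mathfrak{n}) = \mathfrak{u}$ finishes the proof.
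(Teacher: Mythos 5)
Your proposal is correct and follows essentially the same route as the paper: the inclusion $\mathfrak{u}\subseteq\mathfrak{j_u}$ from lemma \ref{alg_lie_groupe_engendre}, the use of $\phi$-infinitesimal saturation to force $J_{\mathfrak{u}}\subseteq N_G(\mathfrak{h})$ (hence $\mathfrak{j_u}\subseteq N_{\mathfrak{g}}(\mathfrak{h})$), lemma \ref{analogue_normalisation_rad_p}, and the Engel argument showing that $N_{\mathfrak{j_u}}(\mathfrak{u})$ is a restricted $p$-nil $p$-ideal of $N_{\mathfrak{g}}(\mathfrak{h})$, leading to $\mathfrak{u}=\mathfrak{j_u}$. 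The only (harmless, in fact slightly cleaner) divergence is the last containment: you invoke the maximality defining $\rad_p(N_{\mathfrak{g}}(\mathfrak{h}))$ directly, where the paper passes through $\rad(N_{\mathfrak{g}}(\mathfrak{h}))$ and identifies $\mathfrak{u}$ with its $p$-nilpotent elements; your optional Engel-free ending via proposition \ref{generalisation_infinitesimale_Deligne} is also legitimate and non-circular, since that proposition is proved independently of this lemma.
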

	
	\begin{proof}
	Recall that according to Lemma \ref{alg_lie_groupe_engendre} one has the inclusion 
$\mathfrak{u}\subseteq \mathfrak{j_u}$. Moreover $N_G(\mathfrak{h})$ being $\phi$-infinitesimally saturated, the group $J_{\mathfrak{u}}$ is a subgroup of $N_G(\mathfrak{h})$. At the Lie algebra level this leads to the following inclusions $\mathfrak{u} \subset \mathfrak{j_u} \subseteq \Lie(N_{G}(\mathfrak{h})) = N_{\mathfrak{g}}(\mathfrak{h}).$

Assume the inclusion $\mathfrak{u} \subsetneq \mathfrak{j_u}$ to be strict. Then $\mathfrak{u}$ is a proper subalgebra of its normaliser in $\mathfrak{j_u}$ (according to \cite[\S4 n\degree 1 Proposition 3]{BOU1}). In other words one has \[\mathfrak{u} \subsetneq N_{\mathfrak{j_u}}(\mathfrak{u}):= \mathfrak{j_u}\cap N_{\mathfrak{g}}(\mathfrak{u})=\mathfrak{j_u} \cap N_{\mathfrak{j_u}}(\mathfrak{h}).\] But according to Lemma \ref{analogue_normalisation_rad_p} the subgroup $J_{\mathfrak{u}}$ is normalised by $N_G(\mathfrak{h})$, hence $N_{\mathfrak{j_u}}(\mathfrak{u}) \subseteq N_{\mathfrak{g}}(\mathfrak{h})$ is an ideal of $N_{\mathfrak{g}}(\mathfrak{h})$. The same arguments as the ones developed in the proof of Lemma \ref{p_nil_integrable} allow us to show that it is a restricted $p$-nil $p$-ideal of $N_{\mathfrak{g}}(\mathfrak{h})$ such that $N_{\mathfrak{j_u}}(\mathfrak{u}) \subseteq \rad(N_{\mathfrak{g}}(\mathfrak{h}))$. This leads to the equality $N_{\mathfrak{j_u}}(\mathfrak{h})=\mathfrak{u}$ as $\mathfrak{u}$ is nothing but the set of $p$-nilpotent elements of $\rad(N_{\mathfrak{g}}(\mathfrak{h}))$. This contradicts the strictness of the inclusion, whence the equality $\mathfrak{u} = \mathfrak{j_u}$. In particular $\mathfrak{u}$ is integrable.
	\end{proof}
	
	\begin{remarks}
Let us better explicit the above condition of $\phi$-infinitesimal saturation with the two following remarks.
	\begin{enumerate}
	\item In the particular case when $\mathfrak{h} = \mathfrak{u}$, namely when $\mathfrak{u} := \rad_p(\N_{\mathfrak{g}}(\mathfrak{u}))$ is the $p$-radical of its normaliser in $\mathfrak{g}$, the $\phi$-infinitesimal saturation assumption is superfluous as in this case the inclusion $J_{\mathfrak{u}}\subseteq N_G(J_{\mathfrak{u}})$ is clear.
	\item The condition of $\phi$-infinitesimal saturation of normalisers might seem to be extremely restrictive. Let us stress out that there exists non-trivial examples of $\phi$-infinitesimally saturated normalisers: any parabolic subgroup satisfies this condition (according to Lemma \ref{parab_phi_inf_sat}) and in characteristic $p>2$ such subgroup appears to be the normaliser of its Lie algebra. Moreover, if $p>\h(G)$ one can show that the normaliser for the adjoint action of $G$ of any restricted $p$-nil $p$-subalgebra is $\exp$-infinitesimally saturated (or infinitesimally saturated).
	 \end{enumerate}
	\end{remarks}

\section{Added in proof: technical results on normalisers and centralisers}

\label{appendice_Lie}
The formalism used in this section is developed in \cite[II, \S 4]{DG}. We especially refer the reader to \cite[II, \S 4, 3.7]{DG} for notations. Let $A$ be a ring and $G$ be an affine $A$-group functor. As a reminder: 
	
\begin{enumerate}
	
	\item if $R$ is an $A$-algebra $R$, we denote by $R[t]$ the algebra of polynomials in $t$ and by $\epsilon$ the image of $t$ via the projection $R[t] \rightarrow R[t]/(t^2)=:R[\epsilon]$.
\noindent We associate to $G$ a functor in Lie algebras denoted by $\mathfrak{Lie}(G)$ and which is the kernel of the following exact sequence: 
	
	\begin{figure}[H]
	\begin{center}
\[\begin{tikzpicture} 

 \matrix (m) [matrix of math nodes,row sep=2em,column sep=1.8em,minimum width=2em,  text height = 1.5ex, text depth = 0.25ex]
  {
    1 & \mathfrak{Lie}(G)(R)& G(R[\epsilon]) & G(R) & 1.\\};
  \path[-stealth]
  	(m-1-1) edge (m-1-2)
    (m-1-2) edge (m-1-3) 
   	(m-1-3) edge node [below] {$p$}(m-1-4) 
   	(m-1-4) edge (m-1-5) 
   	(m-1-4) edge[bend left = -50]
   			node [above] {$i$}(m-1-3);
\end{tikzpicture}\]
\end{center}
\end{figure}
For any $y \in \mathfrak{Lie}(G)(R)$ we denote by $\e^{\epsilon y}$ the image of $y$ in $G(R[\epsilon])$. In what follows the notation $\mathfrak{Lie}(G)(R)$ refers both to the kernel of $p$ as well and to its image in $G(R[\epsilon])$. The Lie-algebra of $G$ is given by the $k$-algebra $\mathfrak{Lie}(G)(A)$ and is denoted by $\Lie(G) := \mathfrak{g}$. According to \cite[II, \S 4, n\degree 4.8, Proposition]{DG} when $G$ is smooth or when $A$ is a field and $G$ is locally of finite presentation over $A$, the equality $\Lie(G) \otimes_A R =  \mathfrak{Lie}(G)(A)\otimes_A R = \mathfrak{Lie}(G)(R) = \Lie(G_R)$ holds true for any $A$-algebra $R$ (these are sufficient conditions). When the aforementioned equality is satisfied the $A$-functor $\mathfrak{Lie}(G)$ is representable by $W(\mathfrak{g})$, where for any $A$-module $M$ and any $A$-algebra $R$ we set $W(M)(R) := M \otimes_A R$;
\item for any $A$-algebra $R$ we use the additive notation to describe the group law of $\mathfrak{Lie}(G)(R)$;
\item the $A$-group functor $G$ acts on $\mathfrak{Lie}(G)$ as follows: for any $A$-algebra $R$ the induced morphism is the following:
\begin{alignat*}{3}
\Ad_R : \: & G_R\: &\rightarrow \: & \Aut(\mathfrak{Lie}(G))(R),\\ 
\: & g \: & \mapsto \: & \Ad_R(g) : \mathfrak{Lie}(G)(R) \rightarrow \mathfrak{Lie}(G)(R) : x \mapsto i(g)xi(g)^{-1}.
\end{alignat*}
When $G$ is smooth (in particular when $\mathfrak{Lie}(G)$ is representable) the $G$-action on $\mathfrak{Lie}(G)$ defines a linear representation $G \rightarrow \GL(\mathfrak{g})$ (see \cite[II, \S 4, n\degree 4.8, Proposition]{DG}).
\end{enumerate}

	\subsection{Centralisers}
	
\begin{lemma}
Let $A$ be a ring and set $S = \Spec(A)$. If $G$ is a smooth affine $S$-group scheme, the equality $\Lie(Z_G(\mathfrak{h})) = Z_{\mathfrak{g}}(\mathfrak{h})$ is satisfied for any subspace $\mathfrak{h}\subset \mathfrak{g}$.
\label{lie_centralisateurs}
\end{lemma}

\begin{proof}
By definition one has:

\begin{alignat*}{3}
\Lie(Z_G(\mathfrak{h}))\: & = \mathfrak{g} \cap Z_G(\mathfrak{h})(A[\epsilon])\\
\: & = \{g \in \mathfrak{g} \mid \Ad(g_{A[\epsilon]})(x) = x, \forall x \in \mathfrak{h}(A[\epsilon])\}.\\
\end{alignat*}
The last identity can be rewritten as $e^{\epsilon g}e^{\epsilon' x}e^{-\epsilon g}e^{-\epsilon' x} = e^{\epsilon \epsilon'[g,x]} = 1$ in $G(A[\epsilon, \epsilon'])$, whence the vanishing of the Lie bracket $[g,x]$ (which is a condition in $G(A[\epsilon])$). This leads to the following equality: 
\[\Lie(Z_G(\mathfrak{h})) = \{g \in \mathfrak{g} \mid [g,x] = 0, \forall x \in \mathfrak{h}(A[\epsilon])\} = Z_ {\mathfrak{g}}(\mathfrak{h}).\]

\end{proof}

\begin{remarks}
Let us emphasize some very particular behaviours of the center:
\begin{enumerate}
\item Let $Z_G(\mathfrak{h})_{\red}$ be the reduced part of the centraliser. Even when $k$ is an algebraically closed field, the equality $\Lie(Z_G(\mathfrak{h})_{\red}) = Z_{\mathfrak{g}}(\mathfrak{h})$ is a priori not satisfied (see for example \cite[2.3]{Jantzen2004}). 
\item Let $S:= \Spec(A)$ be an affine scheme and $G$ be a $S$-group scheme. Assume $Z_G$ to be representable (this condition is in particular satisfied when $G$ is locally free and separated (see \cite[II, \S1, n\degree 3.6 c), Théorème]{DG}). As mentioned in \cite[II, 5.3.3]{SGA31} the algebra $\Lie(Z_G):= \mathfrak{Lie}(Z_G)(A)$ is a subalgebra of $\mathfrak{z_g}$.

According to \cite[XII Théorème 4.7 d) and Proposition 4.11]{SGA32} when $G$ is smooth affine of connected fibers and of zero unipotent rank over $S$, the center of $G$ is the kernel of the adjoint representation $\Ad : G \rightarrow \GL(\mathfrak{g})$. Under these assumptions the equality $\Lie(Z_G)= \mathfrak{z_g}$ holds true. Indeed the following exact sequence of algebraic groups:

\begin{figure}[H]
\begin{center}
\[\begin{tikzpicture} 

 \matrix (m) [matrix of math nodes,row sep=2em,column sep=4.8em,minimum width=2em,  text height = 1.5ex, text depth = 0.25ex]
  {
    1 & Z_G & G & \GL(\mathfrak{g}),\\
  };
  \path[-stealth]
  	(m-1-1) edge (m-1-2)
    (m-1-2) edge (m-1-3) 
    (m-1-3) edge node[above] {$\Ad$} (m-1-4) 
    ;
\end{tikzpicture}\]
\end{center}
\end{figure}
\noindent induces by derivation an exact sequence (see \cite[II, \S 4, n\degree 1.5]{DG}):
\begin{figure}[H]
\begin{center}
\[\begin{tikzpicture} 

 \matrix (m) [matrix of math nodes,row sep=2em,column sep=5.4em,minimum width=2em,  text height = 1.5ex, text depth = 0.25ex]
  {
    0 & \Lie(Z_G) & \mathfrak{g} & \End(\mathfrak{g}).\\
  };
  \path[-stealth]
  	(m-1-1) edge (m-1-2)
    (m-1-2) edge (m-1-3) 
    (m-1-3) edge node[above] {$\ad := \Lie(\Ad) $} (m-1-4) 
    ;  	
\end{tikzpicture}\]
\end{center}
\end{figure}
\noindent The desired equality follows as by definition $\mathfrak{z_g}:= \ker(\ad)$. Let us emphasise that this in particular applies to any reductive $S$-group $G$ and to any parabolic subgroup $P\subseteq G$ (as any Cartan subgroup of $P$ is a Cartan subgroup of $G$).
\end{enumerate}
\label{centre_noyau_ad}
\end{remarks}

	\subsection{Normalisers}
	
Let $S=\Spec(A)$ be an affine scheme and $G$ be a smooth $S$-group scheme of finite presentation. In what follows $H \subseteq G$ is a closed locally free subgroup. Let us stress out that under these conditions the normaliser $N_G(H)$ is representable by a closed group-sub-functor of $G$ according to \cite[II, \S 1 n\degree 3, Théorème 3.6 b)]{DG}. Moreover, if $H$ is smooth, the aforementioned theorem provides the representability of $N_G(\mathfrak{Lie}(H))= N_G(\mathfrak{h})$ as then $\mathfrak{Lie}(H)$ is representable by $W(\mathfrak{h})$ which is locally free.

\begin{lemma}
If $H \subseteq G$ is a closed subgroup then the inclusion $N_G(H) \subseteq N_G(\mathfrak{Lie}(H))$ is satisfied. In particular if $H$ is smooth this leads to the inclusion $N_G(H)(R) \subseteq N_G(\mathfrak{h}_R)$ for any $A$-algebra $R$.
\label{normalisateurs_inclusion}
\end{lemma}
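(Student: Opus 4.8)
The plan is to unwind the functorial definitions of the two normalisers and to exploit that the conjugation action of $G$ on $\mathfrak{Lie}(G)$ recalled in item (3) of the preamble above is precisely the adjoint action. Recall that for a closed locally free subgroup $H\subseteq G$ the normaliser $N_G(H)$ is the $S$-group functor sending an $A$-algebra $R$ to the set of $g\in G(R)$ such that the inner automorphism $\mathrm{int}(g)$ of $G_R$ carries $H_R$ onto itself; equivalently, for every $R$-algebra $R'$ one has $g_{R'}\,H(R')\,g_{R'}^{-1}=H(R')$ inside $G(R')$. Likewise, when $H$ is smooth so that $\mathfrak{Lie}(H)$ is representable by $W(\mathfrak{h})$, the normaliser $N_G(\mathfrak{Lie}(H))=N_G(\mathfrak{h})$ consists of those $g\in G(R)$ such that $\Ad(g_{R'})$ stabilises $\mathfrak{Lie}(H)(R')=\mathfrak{h}\otimes_A R'$ for every $R$-algebra $R'$.

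First I would fix $g\in N_G(H)(R)$ and an $R$-algebra $R'$, and examine the map $\mathrm{int}(g_{R'[\epsilon]})$ on $G(R'[\epsilon])$. Since $g\in N_G(H)(R)$, conjugation by $g_{R'[\epsilon]}$ sends $H(R'[\epsilon])$ into itself. Moreover conjugation is compatible with the split projection $G(R'[\epsilon])\to G(R')$ (it commutes with the structure maps induced by $R'[\epsilon]\to R'$), hence it sends the kernel of $H(R'[\epsilon])\to H(R')$, namely $\mathfrak{Lie}(H)(R')$, into itself. By the definition of the $G$-action on $\mathfrak{Lie}(G)$ recalled above, this restricted conjugation is exactly $\Ad(g_{R'})$ acting on $\mathfrak{Lie}(G)(R')=\mathfrak{g}\otimes_A R'$, so $\Ad(g_{R'})$ stabilises the submodule $\mathfrak{Lie}(H)(R')$. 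Applying the same argument to $g^{-1}$, which also lies in $N_G(H)(R)$, shows that $\Ad(g_{R'})$ in fact restricts to an automorphism of $\mathfrak{Lie}(H)(R')$. As $R'$ was arbitrary, $g\in N_G(\mathfrak{Lie}(H))(R)$, which gives the inclusion $N_G(H)\subseteq N_G(\mathfrak{Lie}(H))$ of $S$-group functors.

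For the last assertion, when $H$ is smooth the functor $\mathfrak{Lie}(H)$ is representable by $W(\mathfrak{h})$ by the preamble, so for every $A$-algebra $R$ one has $\mathfrak{Lie}(H)(R)=\mathfrak{h}_R$ and $N_G(\mathfrak{Lie}(H))(R)=N_G(\mathfrak{h}_R)$; the inclusion just established then reads $N_G(H)(R)\subseteq N_G(\mathfrak{h}_R)$, as claimed. The only point that genuinely requires care is the compatibility of inner conjugation with the projection $R'[\epsilon]\to R'$ and with the base change $R\to R'[\epsilon]$ — that is, checking that $\mathfrak{Lie}$ is functorial enough for the above diagram chase — which is formal but should be spelled out using \cite[II, \S4]{DG}.
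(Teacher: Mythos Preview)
Your proof is correct and follows essentially the same approach as the paper: both arguments observe that for $g\in N_G(H)(R)$ conjugation by $i(g)$ sends $H(R[\epsilon])$ into itself and commutes with the projection $G(R[\epsilon])\to G(R)$, hence preserves the kernel $\mathfrak{Lie}(H)(R)$, which is exactly the statement that $\Ad(g)$ stabilises $\mathfrak{Lie}(H)(R)$. You are a bit more explicit than the paper in quantifying over all $R$-algebras $R'$ to verify the functorial normaliser condition, but this is the same argument spelled out in slightly greater detail.
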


\begin{proof}
Let us remind that $G$ acts on $\mathfrak{Lie}(G)$ via the adjoint representation. Namely for any $A$-algebra $R$ one has: 
\begin{alignat*}{3}
\Ad_R : \: & G_R\: &\rightarrow \: & \GL(\mathfrak{Lie}(G))(R),\\ 
\: & g \: & \mapsto \: & \Ad_R(g) : \mathfrak{Lie}(G)(R) \rightarrow \mathfrak{Lie}(G)(R) : x \mapsto i(g)xi(g)^{-1}.
\end{alignat*}

\noindent Let $g \in N_G(H)(R):= \{g' \in G(R) \mid \Ad(g')(H\otimes_A R) = H\otimes_A R\}$ (see for example \cite[II, \S 1, n\degree 3.4 Definition]{DG}). In particular, for any $x \in \mathfrak{Lie}(H)(R)$, one has: 
\[\Ad(g)(x) = i(g)xi(g)^{-1} \in H(R[\epsilon]) \cap \mathfrak{Lie}(H)(R),\] 
\noindent hence the inclusion $N_G(H) \subseteq N_G(\mathfrak{Lie}(H))$.

If now $H$ is smooth then $\mathfrak{Lie}(H)$ is representable by a $A$-functor of Lie algebras and one has: 
\[\mathfrak{Lie}(H)(A)\otimes_A R = \mathfrak{Lie}(H)(R) = \Lie(H_R):= \mathfrak{h}_R\] 
\noindent for any $A$-algebra $R$.
\end{proof}

\begin{lemma}
Let $\mathfrak{h} \subseteq \mathfrak{g}$ be a Lie subalgebra. Then one has $\Lie(\N_G(\mathfrak{h})) = \N_{\mathfrak{g}}(\mathfrak{h})$.
\label{lie_normalisateurs}
\end{lemma}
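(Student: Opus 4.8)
The plan is to run exactly the dual-numbers computation used in the proof of Lemma~\ref{lie_centralisateurs}, with the vanishing condition ``$[g,x]=0$'' replaced by the membership condition ``$[g,x]\in\mathfrak{h}$''. First I would record that, in the setting fixed for this paragraph ($G$ a smooth $S$-group scheme of finite presentation), $N_G(\mathfrak{h})$ denotes the normaliser of the closed subfunctor $W(\mathfrak{h})\subseteq W(\mathfrak{g})=\mathfrak{Lie}(G)$; since $W(\mathfrak{h})$ is locally free this normaliser is representable by a closed subgroup functor of $G$ (\cite[II, \S1, n\degree 3, Théorème 3.6 b)]{DG}), so $\Lie(N_G(\mathfrak{h}))=\mathfrak{Lie}(N_G(\mathfrak{h}))(A)$ is defined and is, by the very definition of the Lie algebra of a subfunctor of $G$, the set of $g\in\mathfrak{g}$ whose image $\e^{\epsilon g}\in G(A[\epsilon])$ lies in $N_G(\mathfrak{h})(A[\epsilon])$.

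Next I would unwind membership in $N_G(\mathfrak{h})(A[\epsilon])$: it asserts that $\Ad(\e^{\epsilon g})$ preserves $\mathfrak{h}\otimes_A A[\epsilon]$ inside $\mathfrak{Lie}(G)(A[\epsilon])$. Because $\Ad(\e^{\epsilon g})$ is $A[\epsilon]$-linear and $\epsilon^2=0$, its value on a general element $x_0+\epsilon x_1$ with $x_i\in\mathfrak{h}$ equals $\Ad(\e^{\epsilon g})(x_0)+\epsilon x_1$, so the stabilisation condition reduces to requiring $\Ad(\e^{\epsilon g})(x)\in\mathfrak{h}\otimes_A A[\epsilon]$ for every $x\in\mathfrak{h}$. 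The single computation to carry out --- the same one as in the centraliser case --- is that, working in $G(A[\epsilon][\epsilon'])$ and using the commutator identity $\e^{\epsilon g}\e^{\epsilon' x}\e^{-\epsilon g}\e^{-\epsilon' x}=\e^{\epsilon\epsilon'[g,x]}$, one obtains $\Ad(\e^{\epsilon g})(x)=x+\epsilon[g,x]$ in $\mathfrak{Lie}(G)(A[\epsilon])=\mathfrak{g}\otimes_A A[\epsilon]$.

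From there the conclusion is immediate: since $x\in\mathfrak{h}$, the element $x+\epsilon[g,x]$ lies in $\mathfrak{h}\otimes_A A[\epsilon]$ precisely when $[g,x]\in\mathfrak{h}$; moreover once $\Ad(\e^{\epsilon g})$ maps $\mathfrak{h}\otimes_A A[\epsilon]$ into itself it maps it onto itself, its inverse $\Ad(\e^{-\epsilon g})$ being subject to the same computation. Hence $g\in\Lie(N_G(\mathfrak{h}))$ if and only if $[g,x]\in\mathfrak{h}$ for all $x\in\mathfrak{h}$, i.e.\ if and only if $g\in N_{\mathfrak{g}}(\mathfrak{h})$, which is the claimed equality $\Lie(N_G(\mathfrak{h}))=N_{\mathfrak{g}}(\mathfrak{h})$. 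I do not expect a genuine obstacle here; the only mildly delicate point is the reduction from ``$\Ad(\e^{\epsilon g})$ stabilises all of $\mathfrak{h}\otimes_A A[\epsilon]$'' to ``it stabilises $\mathfrak{h}$'', which is pure linear algebra over the dual numbers, while the rest is the bracket-via-commutator identity already exploited for centralisers.
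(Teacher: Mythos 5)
Your argument is correct and follows essentially the same route as the paper: unwind $\Lie(N_G(\mathfrak{h}))$ over the dual numbers, use the commutator identity $\e^{\epsilon g}\e^{\epsilon' x}\e^{-\epsilon g}\e^{-\epsilon' x}=\e^{\epsilon\epsilon'[g,x]}$ to get $\Ad(\e^{\epsilon g})(x)=x+\epsilon[g,x]$, and read off the condition $[g,x]\in\mathfrak{h}$, exactly as in the paper's adaptation of the centraliser computation of Lemma~\ref{lie_centralisateurs}. Your extra remarks (representability of $N_G(\mathfrak{h})$, the reduction to elements of $\mathfrak{h}$, and surjectivity via $\Ad(\e^{-\epsilon g})$) only make explicit points the paper leaves implicit.
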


\begin{proof} 
By definition one has that:
\begin{alignat*}{3}
\Lie(N_G(\mathfrak{h}))\: & = \mathfrak{g} \cap N_G(\mathfrak{h})(A[\epsilon])\\
\: & = \{g \in \mathfrak{g} \mid \Ad(g_{A[\epsilon]})(x) \in \mathfrak{h}_{A[\epsilon]}, \forall x \in \mathfrak{h}(A[\epsilon])\}.\\
\end{alignat*}
The last relation writes: 
\[\Ad(e^{\epsilon g})e^{\epsilon' x} = e^{\epsilon'x}e^{\epsilon'\epsilon[g,x]} = e^{\epsilon' (x + \epsilon [g,x])}\in \mathfrak{h}_R \cap G(A[\epsilon,\epsilon']),\]
\noindent in $G(A[\epsilon, \epsilon'])$, because $\epsilon^2 =0$. In other words one has:
\begin{alignat*}{3}
\Lie(N_G(\mathfrak{h})) \: & = \{g \in \mathfrak{g} \mid x + \epsilon[g,x] \in \mathfrak{h}_{A[\epsilon]}, \forall x \in \mathfrak{h}(A[\epsilon])\}\\
\: & = \{g \in \mathfrak{g} \mid \epsilon[g,x] \in \mathfrak{h}_{A[\epsilon]}, \forall x \in \mathfrak{h}(A[\epsilon])\}\\ 
\: & = \{g \in \mathfrak{g} \mid [g,x] \in \mathfrak{h}_{A[\epsilon]}, \forall x \in \mathfrak{h}(A[\epsilon])\}\\
\: &= N_{\mathfrak{g}}(\mathfrak{h}). 
\end{alignat*}
\end{proof}

The second part of the following lemma is shown in the proof of \cite[Proposition 3.5.7]{CGP} when $k$ is a separably closed field. The study of the proof shows that one actually only needs $H(k)$ to be Zariski-dense in $H$ for the result to hold true. Let us stress out that this is especially verified when:
	\begin{enumerate}
	\item the field $k$ is perfect and the subgroup $H$ is connected (see \cite[Corollary 18.2]{BORlag}),
	\item the field $k$ is infinite and the subgroup $H$ is reductive (see \cite[Corollary 18.2]{BORlag}),
	\item the subgroup $H$ is unipotent smooth connected and split. Indeed, under these assumptions $H$ is isomorphic to a product of $\mathbb{G}_a$s. These conditions are especially satisfied when $k$ is perfect and $H$ is unipotent smooth and connected (which is a special case of (i)).
	\end{enumerate}
	
\begin{lemma}
Let $H\subseteq G$ be a closed and smooth subgroup. Then:
\begin{enumerate}
\item in general only the inclusion $\Lie(N_G(H)) \subseteq N_{\mathfrak{g}}(\mathfrak{h})$ holds true,
\item if $H(k)$ is Zariski-dense in $H$ then
\[\Lie(N_G(H)) = \lbrace x \in \mathfrak{g} \mid \Ad(h)(x) - x \in \mathfrak{h} \ \forall h \in H(k)\rbrace.\]
\end{enumerate} 
\label{lie_norm_inclusion}
\end{lemma}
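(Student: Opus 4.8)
Part (1) requires nothing beyond what is already available: Lemma \ref{normalisateurs_inclusion} gives $N_G(H)\subseteq N_G(\mathfrak{Lie}(H))$ as subgroups of $G$, so $N_G(H)(k[\epsilon])\subseteq N_G(\mathfrak h)(k[\epsilon])$, whence $\Lie(N_G(H))=\mathfrak g\cap N_G(H)(k[\epsilon])\subseteq\mathfrak g\cap N_G(\mathfrak h)(k[\epsilon])=\Lie(N_G(\mathfrak h))$, and the latter equals $N_{\mathfrak g}(\mathfrak h)$ by Lemma \ref{lie_normalisateurs}. The word ``only'' in the statement signals that this inclusion may be proper in characteristic $p$, which is visible from (2); I would not chase a witness of strictness here.

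For (2) the plan is to follow the proof of \cite[Proposition 3.5.7]{CGP}, paying attention to the hypothesis actually used. Since $H$ is smooth, $\mathfrak{Lie}(H)$ is representable by $W(\mathfrak h)$, with $\mathfrak{Lie}(H)(R)=\mathfrak h\otimes_k R$ sitting compatibly inside $\mathfrak{Lie}(G)(R)=\mathfrak g\otimes_k R$ for every $k$-algebra $R$; and since $H$ is closed, $N_G(H)$ is representable. By definition $x\in\Lie(N_G(H))$ if and only if $e^{\epsilon x}\in N_G(H)(k[\epsilon])$, i.e.\ if and only if conjugation by $e^{\epsilon x}$ carries the closed subgroup scheme $H_{k[\epsilon]}\subseteq G_{k[\epsilon]}$ into itself. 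The device I would introduce is the morphism of $k$-schemes
\[
\alpha_x\colon H\longrightarrow W(\mathfrak g/\mathfrak h),\qquad h\longmapsto\overline{\Ad(h)x-x},
\]
well defined because $g\mapsto\Ad(g)x$ is a morphism $H\to W(\mathfrak g)$ (recall $G$ is smooth). The set appearing in the statement is $\{x:\alpha_x\text{ vanishes on }H(k)\}$; since $H$ is smooth over a field it is reduced, so a closed subscheme of $H$ containing the Zariski-dense set $H(k)$ must be all of $H$, and therefore this set equals $\{x:\alpha_x\equiv 0\}$.

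It then remains to prove the two inclusions $\{x:\alpha_x\equiv 0\}\subseteq\Lie(N_G(H))\subseteq\{x:\alpha_x\text{ vanishes on }H(k)\}$, both of which come from the square-zero commutator identity, valid in $G(R)$ for every $k[\epsilon]$-algebra $R$ and every $h\in H(R)$:
\[
e^{\epsilon x}\,h\,e^{-\epsilon x}=h\cdot e^{\epsilon(\Ad(h^{-1})x-x)} ,
\]
with $\Ad(h^{-1})x-x$ understood in $\mathfrak{Lie}(G)(R)$. If $\alpha_x\equiv 0$, then $\Ad(h^{-1})x-x\in\mathfrak h\otimes_k R=\mathfrak{Lie}(H)(R)$, so the right-hand side lies in $H(R)$; applying this to $x$ and to $-x$ shows that conjugation by $e^{\epsilon x}$ maps $H(R)$ onto itself for every $k[\epsilon]$-algebra $R$, hence preserves $H_{k[\epsilon]}$ as a subscheme, so $x\in\Lie(N_G(H))$. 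Conversely, if $x\in\Lie(N_G(H))$, take $R=k[\epsilon]$ and $h\in H(k)$; the identity forces $e^{\epsilon(\Ad(h^{-1})x-x)}\in H(k[\epsilon])$, and since this element lies both in $\ker(G(k[\epsilon])\to G(k))=\mathfrak g$ and in $H(k[\epsilon])$, it lies in $\ker(H(k[\epsilon])\to H(k))=\mathfrak h$, so $\Ad(h^{-1})x-x\in\mathfrak h$; letting $h$ run over $H(k)$, this says $\alpha_x$ vanishes on $H(k)$. Combining the displayed inclusions with the previous paragraph gives $\Lie(N_G(H))=\{x:\alpha_x\equiv 0\}=\{x\in\mathfrak g:\Ad(h)x-x\in\mathfrak h\ \forall h\in H(k)\}$.

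The step I expect to be the real content is the passage from ``conjugation by $e^{\epsilon x}$ preserves $H(R)$ for all $k[\epsilon]$-algebras $R$'' to ``it preserves $H_{k[\epsilon]}$ as a closed subscheme''; this is exactly why the commutator identity has to be run over arbitrary $k[\epsilon]$-algebras rather than only over $k[\epsilon]$. The Zariski-density hypothesis is used in exactly one place---to upgrade the vanishing of $\alpha_x$ on $H(k)$ to its identical vanishing---and this is also where reducedness, hence smoothness, of $H$ enters.
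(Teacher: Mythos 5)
Your proof is correct and takes essentially the same route as the paper: part (1) is exactly the paper's combination of Lemma \ref{normalisateurs_inclusion} with Lemma \ref{lie_normalisateurs}, and part (2) is the argument of \cite[Proposition 3.5.7]{CGP}, which the paper simply cites after noting (in the paragraph preceding the statement) that only the Zariski-density of $H(k)$ is needed. The only difference is that you spell out that cited argument (the morphism $\alpha_x$, the density-plus-reducedness step, and the commutator identity run over arbitrary $k[\epsilon]$-algebras), and your write-up of it is sound.
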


\begin{proof}
The inclusion $N_G(H) \subseteq N_{G}(\mathfrak{Lie}(H))$ is provided by Lemma \ref{normalisateurs_inclusion}. Combining this together with the equality obtained in Lemma \ref{lie_norm_inclusion} one obtains:
\[\Lie(N_G(H)) \subseteq \Lie(N_G(\mathfrak{h})) = N_{\mathfrak{g}}(\mathfrak{h}).\] As already mentioned, the second assertion of the lemma is shown in \cite[Proposition 3.5.7]{CGP}. 
\end{proof}

\begin{remarks}
The first point of the above lemma provides a strict inclusion of Lie algebras in the general case. This is actually a positive characteristic phenomenon (see \cite[10.5 Corollary B]{HUM3} and the remark that follows Corollary B):
	\begin{enumerate}
		\item when $k$ is of characteristic $0$ the aforementioned inclusion is always an equality (see \cite[13. Exercise 1]{HUM3}),
		\item when $k$ is of characteristic $p>0$, the inclusion may be strict as shown on the following example (see \cite[10 Exercise 4]{HUM3}): assume $p=2$. Set $G = \SL_2$ and consider the Borel subgroup $B$ of upper triangular matrices. The group $B$ being parabolic it is its self normaliser. In other words one has $N_G(B) = B$. However, at the Lie algebra level one has $\Lie(N_G(B)) = \mathfrak{g}$ (as $k$ is of characteristic $2$). Indeed $\ssl_2$ is generated by $\begin{pmatrix}
		1&0\\
		0&1
		\end{pmatrix}, \ \begin{pmatrix}
		0&1\\
		0&0
		\end{pmatrix}, \ \begin{pmatrix}
		0 & 0\\
		1 &0
		\end{pmatrix}$
and one only needs to show that the bracket of the following two matrices $\begin{pmatrix}
0 & 0\\
1 & 0 
\end{pmatrix} $ and $\begin{pmatrix}
0 & 1\\
0 & 0 
\end{pmatrix}$ still belongs to $\mathfrak{b}$. One has: $\left[\begin{pmatrix}
0 & 1\\
0 & 0 
\end{pmatrix},  \begin{pmatrix}
0 & 0\\
1 & 0 
\end{pmatrix}\right] = \id \in \mathfrak{b}$.
	\end{enumerate}
\end{remarks}

\underline{\textbf{Acknowledgements :}}
The author would like to thank Philippe Gille for all the fruitful discussions they had which go beyond the framework of this paper, his availability and his multiple reviews and corrections of this article; Benoît Dejoncheere for his supportive help, his several reviews and useful advice; Matthieu Romagny for conversations on the nilpotent scheme; as well as both reviewers of her PhD manuscript: Anne-Marie Aubert and Vikraman Balaji for their corrections and remarks.  Finally many thanks should go to the anonymous referee of this article for raising many interesting points that definitely helped to improve its content. Any critical remark must be exclusively addressed to the author of this paper.

\bibliographystyle{amsalpha}
\bibliography{bib}

\providecommand{\bysame}{\leavevmode\hbox to3em{\hrulefill}\thinspace}
\providecommand{\MR}{\relax\ifhmode\unskip\space\fi MR }
\providecommand{\MRhref}[2]{%
  \href{http://www.ams.org/mathscinet-getitem?mr=#1}{#2}
}
\providecommand{\href}[2]{#2}
\begin{thebibliography}{{Sta}22}

\bibitem[BDP17]{BDP}
V.~Balaji, P.~Deligne, and A.~J. Parameswaran, \emph{On complete reducibility
  in characteristic {$p$}}, \'{E}pijournal G\'{e}om. Alg\'{e}brique \textbf{1}
  (2017), Art. 3, 27, With an appendix by Zhiwei Yun. \MR{3743106}

\bibitem[Bor91]{BORlag}
A.~Borel, \emph{Linear algebraic groups}, second ed., Graduate Texts in
  Mathematics, vol. 126, Springer-Verlag, New York, 1991. \MR{1102012}

\bibitem[Bou71]{BOU1}
N.~Bourbaki, \emph{\'el\'ements de math\'ematique. {F}asc. {XXVI}. {G}roupes et
  alg\`ebres de {L}ie. {C}hapitre {I}: {A}lg\`ebres de {L}ie}, Seconde
  \'edition. Actualit\'es Scientifiques et Industrielles, No. 1285, Hermann,
  Paris, 1971. \MR{0271276}

\bibitem[BR85]{BarRic}
P.~Bardsley and R.~W. Richardson, \emph{\'{E}tale slices for algebraic
  transformation groups in characteristic {$p$}}, Proc. London Math. Soc. (3)
  \textbf{51} (1985), no.~2, 295--317. \MR{794118}

\bibitem[BT72]{BorT2}
A.~Borel and J.~Tits, \emph{Compl\'{e}ments \`a l'article: ``{G}roupes
  r\'{e}ductifs''}, Inst. Hautes \'{E}tudes Sci. Publ. Math. (1972), no.~41,
  253--276. \MR{315007}

\bibitem[CGP15]{CGP}
B.~Conrad, O.~Gabber, and G.~Prasad, \emph{Pseudo-reductive groups}, second
  ed., New Mathematical Monographs, vol.~26, Cambridge University Press,
  Cambridge, 2015. \MR{3362817}

\bibitem[Con14]{C}
B.~Conrad, \emph{Reductive group schemes}, Autour des sch\'emas en groupes.
  {V}ol. {I}, Panor. Synth\`eses, vol. 42/43, Soc. Math. France, Paris, 2014,
  pp.~93--444. \MR{3362641}

\bibitem[Del14]{D1}
P.~Deligne, \emph{Semi-simplicit\'{e} de produits tensoriels en
  caract\'{e}ristique {$p$}}, Invent. Math. \textbf{197} (2014), no.~3,
  587--611. \MR{3251830}

\bibitem[DG70]{DG}
M.~Demazure and P.~Gabriel, \emph{Groupes alg\'ebriques. {T}ome {I}:
  {G}\'eom\'etrie alg\'ebrique, g\'en\'eralit\'es, groupes commutatifs}, Masson
  \& Cie, \'Editeur, Paris; North-Holland Publishing Co., Amsterdam, 1970, Avec
  un appendice {{\i}t Corps de classes local} par Michiel Hazewinkel.
  \MR{0302656}

\bibitem[DG11a]{SGA31}
M.~Demazure and A.~Grothendieck, \emph{Sch\'{e}mas en groupes ({SGA} 3). {T}ome
  {I}. {P}ropri\'{e}t\'{e}s g\'{e}n\'{e}rales des sch\'{e}mas en groupes},
  Documents Math\'{e}matiques (Paris) [Mathematical Documents (Paris)], vol.~7,
  Soci\'{e}t\'{e} Math\'{e}matique de France, Paris, 2011, S\'{e}minaire de
  G\'{e}om\'{e}trie Alg\'{e}brique du Bois Marie 1962--64, avec la
  collaboration de M. Artin, J.-E. Bertin, P. Gabriel, M. Raynaud and J.-P.
  Serre, \'Edition recomposée et annotée de l'édition originale de 1970,
  éditeurs P. Gille et P. Polo. \MR{2867621}

\bibitem[DG11b]{SGA32}
\bysame, \emph{Sch\'emas en groupes ({SGA} 3). {T}ome {II}}, S\'eminaire de
  G\'eom\'etrie Alg\'ebrique du Bois Marie 1962--64, avec la collaboration de
  M. Artin, J.-E. Bertin, P. Gabriel, M. Raynaud and J.-P. Serre, \'Edition
  recomposée et annotée de l'édition originale de 1970, éditeurs P. Gille
  et P. Polo, 2011.

\bibitem[DG11c]{SGA33}
\bysame, \emph{Sch\'emas en groupes ({SGA} 3). {T}ome {III}. {S}tructure des
  sch\'emas en groupes r\'eductifs}, Documents Math\'ematiques (Paris)
  [Mathematical Documents (Paris)], vol.~8, Soci\'et\'e Math\'ematique de
  France, Paris, 2011, S\'eminaire de G\'eom\'etrie Alg\'ebrique du Bois Marie
  1962--64, avec la collaboration de M. Artin, J.-E. Bertin, P. Gabriel, M.
  Raynaud and J.-P. Serre, \'Edition recomposée et annotée de l'édition
  originale de 1970, éditeurs P. Gille et P. Polo. \MR{2867622}

\bibitem[Her13]{HER}
S.~Herpel, \emph{On the smoothness of centralizers in reductive groups}, Trans.
  Amer. Math. Soc. \textbf{365} (2013), no.~7, 3753--3774. \MR{3042602}

\bibitem[Hes76]{HESS1}
W.~Hesselink, \emph{Singularities in the nilpotent scheme of a classical
  group}, Trans. Amer. Math. Soc. \textbf{222} (1976), 1--32. \MR{0429875}

\bibitem[His84]{Hi}
G.~Hiss, \emph{Die adjungierten {D}arstellungen der {C}hevalley-{G}ruppen},
  Arch. Math. (Basel) \textbf{42} (1984), no.~5, 408--416. \MR{756692}

\bibitem[Hog82]{Ho}
G.~M.~D. Hogeweij, \emph{Almost-classical {L}ie algebras. {I}, {II}}, Nederl.
  Akad. Wetensch. Indag. Math. \textbf{44} (1982), no.~4, 441--452, 453--460.
  \MR{683531}

\bibitem[HS16]{HS}
S.~Herpel and D.~I. Stewart, \emph{On the smoothness of normalisers, the
  subalgebra structure of modular lie algebras, and the cohomology of small
  representations}, Doc. Math., J. DMV \textbf{21} (2016), 1--37.

\bibitem[Hum75]{HUM3}
J.~E. Humphreys, \emph{Linear algebraic groups}, Springer-Verlag, New
  York-Heidelberg, 1975, Graduate Texts in Mathematics, No. 21. \MR{0396773}

\bibitem[Hum95]{HUM2}
\bysame, \emph{Conjugacy classes in semisimple algebraic groups}, Mathematical
  Surveys and Monographs, vol.~43, American Mathematical Society, Providence,
  RI, 1995. \MR{1343976}

\bibitem[Iwa48]{IWA}
K.~Iwasawa, \emph{On the representation of {L}ie algebras}, Jpn. J. Math.
  \textbf{19} (1948), 405--426. \MR{32613}

\bibitem[Jac52]{JAC}
N.~Jacobson, \emph{A note on {L}ie algebras of characteristic {$p$}}, Amer. J.
  Math. \textbf{74} (1952), 357--359. \MR{47026}

\bibitem[Jan04]{Jantzen2004}
J.~C. Jantzen, \emph{Nilpotent orbits in representation theory}, pp.~1--211,
  Birkh{\"a}user Boston, Boston, MA, 2004.

\bibitem[Jea20]{JEAthese}
M.~Jeannin, \emph{{Sur les sous-groupes paraboliques associ{\'e}s {\`a} un
  groupe r{\'e}ductif}}, Theses, {Universit{\'e} de Lyon (COMUE)}, September
  2020.

\bibitem[LMT09]{LMT}
Paul Levy, George McNinch, and Donna~M. Testerman, \emph{Nilpotent subalgebras
  of semisimple {L}ie algebras}, C. R. Math. Acad. Sci. Paris \textbf{347}
  (2009), no.~9-10, 477--482. \MR{2576893}

\bibitem[McN02]{MAUS}
G.~J. McNinch, \emph{Abelian unipotent subgroups of reductive groups}, J. Pure
  Appl. Algebra \textbf{167} (2002), no.~2-3, 269--300. \MR{1874545}

\bibitem[McN03]{McNinch2003}
\bysame, \emph{Sub-principal homomorphisms in positive characteristic}, Math.
  Z. \textbf{244} (2003), no.~2, 433--455. \MR{1992546}

\bibitem[McN05]{MCNOPT}
\bysame, \emph{Optimal {${\rm SL}(2)$}-homomorphisms}, Comment. Math. Helv.
  \textbf{80} (2005), no.~2, 391--426. \MR{2142248}

\bibitem[McN07]{M2}
\bysame, \emph{Completely reducible {L}ie subalgebras}, Transform. Groups
  \textbf{12} (2007), no.~1, 127--135. \MR{2308032}

\bibitem[MT09]{MT2}
G.~J. McNinch and D.~M. Testerman, \emph{Nilpotent centralizers and {S}pringer
  isomorphisms}, J. Pure Appl. Algebra \textbf{213} (2009), no.~7, 1346--1363.
  \MR{2497582}

\bibitem[PS18]{PevSta}
J.~Pevtsova and J.~Stark, \emph{Varieties of elementary subalgebras of maximal
  dimension for modular {L}ie algebras}, Geometric and topological aspects of
  the representation theory of finite groups, Springer Proc. Math. Stat., vol.
  242, Springer, Cham, 2018, pp.~339--375. \MR{3901167}

\bibitem[Ric17]{Riche}
S.~Riche, \emph{Kostant section, universal centralizer, and a modular derived
  {S}atake equivalence}, Math. Z. \textbf{286} (2017), no.~1-2, 223--261.
  \MR{3648498}

\bibitem[Ros58]{RosM}
M.~Rosenlicht, \emph{Extensions of vector groups by abelian varieties}, Amer.
  J. Math. \textbf{80} (1958), 685--714. \MR{99340}

\bibitem[Sei00]{Sei}
G.~M. Seitz, \emph{Unipotent elements, tilting modules, and saturation}, Inv.
  Math. \textbf{141} (2000), no.~3, 467--502.

\bibitem[Sel67]{SEL}
G.~B. Seligman, \emph{Modular {L}ie algebras}, Ergebnisse der Mathematik und
  ihrer Grenzgebiete, Band 40, Springer-Verlag New York, Inc., New York, 1967.
  \MR{0245627}

\bibitem[Ser88]{Ser1988}
J.-P. Serre, \emph{Algebraic groups and class fields}, Graduate Texts in
  Mathematics, vol. 117, Springer-Verlag, New York, 1988, Translated from the
  French. \MR{918564}

\bibitem[Ser94]{SER1}
\bysame, \emph{Sur la semi-simplicit\'{e} des produits tensoriels de
  repr\'{e}sentations de groupes}, Invent. Math. \textbf{116} (1994), no.~1-3,
  513--530. \MR{1253203}

\bibitem[Ser96]{SER2}
\bysame, \emph{Exemples de plongements des groupes {${\rm PSL}_2({\bf F}_p)$}
  dans des groupes de {L}ie simples}, Invent. Math. \textbf{124} (1996),
  no.~1-3, 525--562. \MR{1369427}

\bibitem[Ser98]{SER}
Jean-Pierre Serre, \emph{Moursund {Lectures} {Lecture} at the {University} of
  {Oregon} {Mathematics} {Department} - 1998}, Cours de Jean-Pierre Serre,
  no.~16, 1998 (en).

\bibitem[SF88]{FS}
H.~Strade and R.~Farnsteiner, \emph{Modular {L}ie algebras and their
  representations}, Monographs and Textbooks in Pure and Applied Mathematics,
  vol. 116, Marcel Dekker, Inc., New York, 1988. \MR{929682}

\bibitem[Slo80]{SLO}
P.~Slodowy, \emph{Simple singularities and simple algebraic groups}, Lecture
  Notes in Mathematics, vol. 815, Springer, Berlin, 1980. \MR{584445}

\bibitem[Sob15]{SOB1}
P.~Sobaje, \emph{Springer isomorphisms in characteristic {$p$}}, Transform.
  Groups \textbf{20} (2015), no.~4, 1141--1153. \MR{3416442}

\bibitem[Sob18]{SOB2}
\bysame, \emph{Unipotent elements and generalized exponential maps}, Adv. Math.
  \textbf{333} (2018), 463--496. \MR{3818083}

\bibitem[Spr69]{SPR}
T.~A. Springer, \emph{The unipotent variety of a semi-simple group}, Algebraic
  {G}eometry ({I}nternat. {C}olloq., {T}ata {I}nst. {F}und. {R}es., {B}ombay,
  1968), Oxford Univ. Press, London, 1969, pp.~373--391. \MR{0263830}

\bibitem[{Sta}22]{stacks-project}
The {Stacks project authors}, \emph{The stacks project},
  \url{https://stacks.math.columbia.edu}, 2022.

\bibitem[Ste75]{Stei_prime}
R.~Steinberg, \emph{Torsion in reductive groups}, Advances in Math. \textbf{15}
  (1975), 63--92. \MR{354892}

\bibitem[Tit66]{Tit}
J.~Tits, \emph{Classification of algebraic semisimple groups}, Algebraic
  {G}roups and {D}iscontinuous {S}ubgroups ({P}roc. {S}ympos. {P}ure {M}ath.,
  {B}oulder, {C}olo., 1965), Amer. Math. Soc., Providence, R.I., 1966, 1966,
  pp.~33--62. \MR{0224710}

\bibitem[Vas05]{VAS}
A.~Vasiu, \emph{Normal, unipotent subgroup schemes of reductive groups}, C. R.
  Math. Acad. Sci. Paris \textbf{341} (2005), no.~2, 79--84. \MR{2153960}

\end{thebibliography}
\end{document}